\newtheorem{thm}{Theorem}[section]
\newtheorem{cor}[thm]{Corollary}
\newtheorem{lem}[thm]{Lemma}
\newtheorem{prop}[thm]{Proposition}
\theoremstyle{definition}
\newtheorem{defn}[thm]{Definition}
\newtheorem{example}[thm]{Example}
\theoremstyle{remark}
\newcommand{\Hom}{\operatorname{Hom}}
\newcommand{\To}{\rightarrow}
\newcommand{\ZZ}{\mathbb{Z}}
\newcommand{\KK}{\mathbb{K}}
\newcommand{\cL}{\mathcal{L}}
\newcommand{\cR}{\mathcal{R}}
\newcommand{\g}{\mathfrak{g}}
\newcommand{\ab}{\mathfrak{a}}
\newcommand{\h}{\mathfrak{h}}
\newcommand{\cO}{\mathcal{O}}
\newcommand{\bq}{\mathbf{q}}
\newcommand{\qas}{\cO_\bq(\KK^n)}
\newcommand{\qasd}{\cO_{\bq'}(\KK^n)}
\newcommand{\Ann}{\operatorname{Ann}}
\newcommand{\End}{\operatorname{End}}
\newcommand{\chr}{\operatorname{char}}
\newcommand{\Ker}{\operatorname{Ker}}
\newcommand{\ch}{\operatorname{char}}
\newcommand{\Loc}[1]{{#1}\operatorname{-Mod}_{loc}}
\newcommand{\Mod}[1]{{#1}\operatorname{-Mod}}
\newcommand{\Comod}[1]{\operatorname{Comod-}{#1}}
\author{Can Hat\.{i}po\u{g}lu}
\address{Department of Mathematics, College of Engineering and Technology, American University of the Middle East, Kuwait}
\email{osman.hatipoglu@aum.edu.kw}
\author{Christian Lomp}
\address{Department of Mathematics of the Faculty of Science and  Center of Mathematics, University of Porto, Rua Campo Alegre, 687, 4169-007 Porto, Portugal}
\email{clomp@fc.up.pt}
\title{Locally finite representations over Noetherian Hopf algebras}
\subjclass[2010]{16D50; 16P40 16T05, 17B37}
\keywords{Locally finite representations, Hopf algebras, Hopf Ore extensions, finite dual, injective modules}
\begin{document}

\begin{abstract}
We study finite dimensional representations over some Noetherian algebras over a field of characteristic zero. More precisely, we give necessary and sufficient conditions for the category of locally finite dimensional representations to be closed under taking injective hulls and extend results known for group rings and enveloping algebras to Ore extensions, Hopf crossed products, and affine Hopf algebras of low Gelfand-Kirillov dimension.
\end{abstract}
\maketitle
\setcounter{section}{0}
\section*{Introduction}


This paper is concerned with the question under which conditions the category $\Loc{A}$ of locally finite representations over a Noetherian $\KK$-algebra $A$ is closed under taking injective hulls in $\Mod{A}$\footnote{ Recall that the injective hull $E(M)$ of a non-zero module $M$ can be either defined as being a minimal injective object containing $M$ or as a {\it maximal essential extension} $E$ of $M$, where a submodule is called {\it essential} if and only if it has non-zero intersection with any non-zero submodule of $E$.}, in which case we will say that $\Loc{A}$ is \emph{essentially closed}. An example of a locally finite left $A$-module is its finite dual $A^\circ$, which is known to be a coalgebra. Furthermore, any coalgebra is an injective object in the category of its comodules (see \cite{BrzezinskiWisbauer}*{3.21}). Hence the coalgebra $A^\circ$ is an injective object in $\Comod{A^\circ} =\Loc{A}$ and so a necessary condition for $\Loc{A}$ to be  essentially closed is that the finite dual $A^\circ$ is an injective $A$-module. We will see, that the injectivity of $A^\circ$ as left $A$-module is also a sufficient condition, in case $A$ is Noetherian. 

\medskip

Our study was motivated by  various results in the literature: Let $G$ be a polycyclic-by-finite group and $A=\KK[G]$ its group ring over a field $\KK$. Then $\Loc{A}$ is essentially closed (see \cites{Donkin, musson}; S. Donkin attributes this result to K. Brown in \cite{Donkin}). Actually the interest in this question dates back to works by P.  Hall and J. E. Roseblade from the 1960s and 1970s on finitely generated soluble groups (see \cite{Hall,Roseblade}). Earlier, in \cite{Matlis}*{Proposition 3}, E. Matlis had already shown, by using the Artin-Rees property, that over a commutative Noetherian ring,  the injective hull of a simple module is Artinian. For a finite dimensional Lie algebra $\g$ over  field $\KK$ with  enveloping algebra $A=U(\g)$, $\Loc{A}$ is essentially closed if and only if $\g$ is solvable or $\ch(\KK)>0$ (see \cite{Feldvoss}). The sufficiency for solvable Lie algebras in characteristic zero had been proven by S. Donkin and R. Dahlberg \cites{Dahlberg, Donkin82}, while the necessity has been shown by J. Feldvoss. An old result of N. Jacobson \cite{Jacobson}*{Proposition 2} states  that  $U(\g)$ is always a finite module over its center for a finite dimensional Lie algebra $\g$ in positive characteristic. Hence $U(\g)$ is a Noetherian PI-algebra for which locally finite representations are essentially closed (see Theorem \ref{sufficient} below).

\medskip
	
There are instances when $\Loc{A}$ is essentially closed due to ``trivial" reasons. One extreme is when all representations over $A$  are locally finite, which is precisely the case when $A$ is a finite dimensional algebra. The other extreme is when there are no non-zero finite dimensional representations. This is the case, for example, for any infinite dimensional simple $\KK$-algebra $A$, like the Weyl algebras. In case $A$ is a (not necessarily commutative) Noetherian $\KK$-algebra such that all  irreducible representations over $A$ are finite dimensional, then $\Loc{A}$ is essentially closed if and only if every finitely generated essential extension of an irreducible representation is Artinian. The latter condition has been termed condition $(\diamond)$ in a series of papers \cites{BrownCarvalhoMatczuk,CarvalhoLompPusat,CarvalhoHatipogluLomp,CarvalhoMusson,HatipogluLomp,Musson11}. One of the main results by K. Brown {\emph{et al.}} in \cite{BrownCarvalhoMatczuk} is that for an affine commutative Noetherian $\KK$-algebra $R$ and automorphism $\alpha$, the skew polynomial ring $A=R[\theta; \alpha]$ satisfies $(\diamond)$ if and only if all irreducible left $A$-modules are finite dimensional. Hence in this case $ (\diamond)$ implies $\Loc{A}$ to be essentially closed. However, in general, these two conditions are independent. For any Weyl algebra $A$, $\Loc{A}$ is essentially closed but the second Weyl algebra over the complex numbers does not satisfy $(\diamond)$ by an old result of T. Stafford. On the other hand, the enveloping algebra $A = U(\mathfrak{sl}_2)$ satisfies $(\diamond)$ by \cite{Dahlberg89} but $\Loc{A}$ is not essentially closed as we show below. 

The structure of this paper is as follows: in Section 1, we will identify the category of locally finite representations of an algebra $A$ with the category of right comodules over the finite dual $A^\circ$. The first main result of Section 2 is  Theorem \ref{generalTheorem}, which characterizes Noetherian algebras $A$, whose  locally finite representations are closed under essential extensions by reducing our problem to finite dimensional irreducible representations and by showing that this is equivalent to the injectivity of $A^\circ$:

\begin{thm} The following statements are equivalent for a Noetherian $\KK$-algebra $A$.
	\begin{enumerate}
		\item[(a)] $\Loc{A}$ is essentially closed.
		\item[(b)] The injective hull of any finite dimensional irreducible representation over $A$ is locally finite.
		\item[(c)] $\mathrm{Loc}(E)$ is a direct summand for any injective left $A$-module $E$.
		\item[(d)] $A^\circ$ is an injective left $A$-module.
	\end{enumerate}
\end{thm}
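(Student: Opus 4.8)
The plan is to prove the cycle of implications $(d)\Rightarrow(c)\Rightarrow(a)\Rightarrow(b)\Rightarrow(d)$, using throughout the identification $\Loc{A}=\Comod{A^\circ}$ established in Section~1 and the fact that $A^\circ$ is injective in $\Comod{A^\circ}$. For a left $A$-module $E$, write $\mathrm{Loc}(E)$ for its largest locally finite submodule, i.e. the sum of all finite dimensional submodules; this is the \emph{rational} or \emph{locally finite part} and is functorial. The key reformulation is that $\Loc{A}$ is essentially closed precisely when, for every $M\in\Loc{A}$, one has $E(M)\in\Loc{A}$, equivalently $\mathrm{Loc}(E(M))=E(M)$. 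Since any module $M$ has an essential socle-type filtration reducing questions to its socle, and since every object of $\Loc{A}$ embeds into a direct sum of injective hulls of its finite dimensional irreducible submodules, the content is concentrated on irreducibles, which is why (b) suffices.

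First I would show $(a)\Leftrightarrow(b)$. The implication $(a)\Rightarrow(b)$ is immediate since a finite dimensional irreducible module lies in $\Loc{A}$ and essential closure forces its injective hull into $\Loc{A}$. For $(b)\Rightarrow(a)$, take $M\in\Loc{A}$ and consider $E(M)$; I would argue that $\mathrm{Loc}(E(M))$ is an essential extension of $M$ that is itself injective as an object of $\Loc{A}=\Comod{A^\circ}$, hence $E(M)=\mathrm{Loc}(E(M))$. The step that requires care is controlling the socle: because $A$ is Noetherian, $M$ has an essential finitely generated, indeed one reduces to the case of indecomposable injective hulls $E(S)$ of finite dimensional simples $S$, and hypothesis (b) says exactly these stay locally finite. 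One then uses that a sum (or suitable union) of locally finite injective hulls remains locally finite, invoking the Noetherian hypothesis to pass from the finitely generated situation to arbitrary $M$.

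Next I would establish $(b)\Rightarrow(d)$ and $(d)\Rightarrow(c)\Rightarrow(a)$. For $(d)\Rightarrow(c)$: if $A^\circ$ is injective as a left $A$-module, then every injective object in $\Loc{A}$ is a summand of a product or copower built from $A^\circ$, and since $\mathrm{Loc}(-)$ is a functor commuting with the relevant limits, $\mathrm{Loc}(E)$ inherits injectivity in $\Mod{A}$, making the inclusion $\mathrm{Loc}(E)\hookrightarrow E$ split. Then $(c)\Rightarrow(a)$ follows because for $M\in\Loc{A}$, writing $E=E(M)$, the splitting gives $E=\mathrm{Loc}(E)\oplus E'$; essentiality of $M$ in $E$ and $M\subseteq\mathrm{Loc}(E)$ force $E'=0$, so $E=\mathrm{Loc}(E)\in\Loc{A}$. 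Finally $(b)\Rightarrow(d)$: since $A^\circ$ is a locally finite module whose indecomposable injective components in $\Comod{A^\circ}$ are the hulls $E_{\Loc{A}}(S)$ of the finite dimensional simples, hypothesis (b) identifies these with the honest injective hulls $E(S)$ in $\Mod{A}$, so $A^\circ$ is already injective in $\Mod{A}$.

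The main obstacle I anticipate is the passage from irreducibles to arbitrary locally finite modules in $(b)\Rightarrow(a)$, i.e. showing that $\mathrm{Loc}(E(M))$ is genuinely injective in $\Comod{A^\circ}$ rather than merely locally finite. This is where the Noetherian hypothesis on $A$ is essential: it guarantees that injective modules decompose as direct sums of indecomposable injectives (Matlis theory), so that $E(M)$ is a direct sum of hulls $E(S_i)$ of simple modules, and one must verify that the finite dimensional simples among these control the whole module while the infinite dimensional ones contribute nothing locally finite. Handling this decomposition carefully, and confirming that applying $\mathrm{Loc}(-)$ to such a direct sum commutes appropriately and preserves the injectivity needed via Baer's criterion relative to $\Comod{A^\circ}$, is the crux of the argument.
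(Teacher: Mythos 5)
Your overall route is viable and genuinely different from the paper's. The paper makes (a) the hub of all implications: its proof of (b)$\Rightarrow$(a) is an elementary reduction to cyclic submodules (for $x$ in an essential extension $E$ of $V$, the intersection $Ax\cap V$ is finitely generated by the Noetherian hypothesis, hence finite dimensional, so $Ax$ is an essential extension of a finite dimensional module and embeds into the injective hull of its socle — only \emph{finite} direct sums of injectives are ever needed), and (a)$\Leftrightarrow$(d), (a)$\Leftrightarrow$(c) are proved directly, with (d)$\Rightarrow$(a) resting on the embedding of any comodule into a copower $(A^\circ)^{(\Lambda)}$ together with the Noetherian fact that such a copower is injective. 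Your cycle (a)$\Rightarrow$(b)$\Rightarrow$(d)$\Rightarrow$(c)$\Rightarrow$(a) instead front-loads the comodule structure theory of $A^\circ$ (decomposition of the injective cogenerator into hulls of simple comodules) and the fact that over a left Noetherian ring arbitrary direct sums of injectives are injective. Both work; the paper's treatment of (b)$\Rightarrow$(a) is more elementary, while yours makes explicit the structural picture of $A^\circ$ as a direct sum of hulls of finite dimensional simples, and your (d)$\Rightarrow$(c) (via $\mathrm{Loc}(E)$ being a summand of a copower of $A^\circ$) is a clean shortcut the paper does not take.

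Three points, however, need repair. First, your opening claim in (b)$\Rightarrow$(a) — that $\mathrm{Loc}(E(M))$ is injective in $\Loc{A}$ and essential over $M$, \emph{hence} $E(M)=\mathrm{Loc}(E(M))$ — is a non sequitur: injectivity in the subcategory $\Loc{A}$ does not imply injectivity in $\Mod{A}$, and that discrepancy is exactly what the theorem is about; your later "crux" paragraph supersedes this and the claim should simply be deleted. Second, "$M$ has an essential finitely generated submodule" is false in general (an infinite direct sum of simples has none); what is true, and what your argument needs, is that every locally finite module has an essential \emph{socle}, since each nonzero cyclic submodule is finite dimensional and hence contains a simple. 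Moreover, every simple submodule of a locally finite module is finite dimensional, so your worry about "infinite dimensional simples contributing nothing locally finite" is moot — no such simples occur. Third, the Matlis--Papp decomposition over a noncommutative Noetherian ring produces hulls of \emph{uniform} modules, not of simples; the decomposition you actually want, $E(M)=\bigoplus_i E(S_i)$ with $\mathrm{Soc}(M)=\bigoplus_i S_i$, follows instead from the essential socle: $E(M)=E(\mathrm{Soc}(M))$, and $\bigoplus_i E(S_i)$ is injective (Noetherian hypothesis) and essential over $\bigoplus_i S_i$. With these substitutions every step of your cycle goes through, including (b)$\Rightarrow$(d), where one writes $A^\circ$ as a direct sum of injective hulls in $\Loc{A}$ of its (finite dimensional) simple subcomodules and uses (b) to identify each such hull with the corresponding injective hull in $\Mod{A}$.
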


Section 2 contains also sufficient conditions to test whether $\Loc{A}$ is essentially closed, with Theorem \ref{sufficient} being the main result of the section. 

\begin{thm}The category of locally finite representations of a Noetherian  $\KK$-algebra $A$ is essentially closed if every maximal ideal of finite codimension contains an ideal $Q$ that satisfies one of the following conditions:
	\begin{enumerate}
		\item  $Q$ has the Artin-Rees property and $A/Q$ is an affine PI-algebra. 
		\item  $Q$ is polynormal and  $\Loc{A/Q}$ is essentially closed.
	\end{enumerate}
\end{thm}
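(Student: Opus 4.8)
The plan is to reduce everything, via the equivalence (a)$\Leftrightarrow$(b) of Theorem \ref{generalTheorem}, to showing that $E_A(S)$ is locally finite for an arbitrary finite-dimensional irreducible $A$-module $S$. Fix such an $S$ and let $Q\subseteq\Ann_A(S)=:\m$ be the ideal provided by the hypothesis; since $QS=0$, the module $S$ is also a finite-dimensional simple module over the Noetherian quotient $A/Q$. I would first isolate the single engine behind both cases, a \emph{bootstrap lemma}: if an ideal $Q\subseteq\Ann_A(S)$ has the Artin--Rees property and $E_{A/Q}(S)$ is locally finite, then $E_A(S)$ is locally finite. Granting this lemma, case (2) is immediate, since polynormal ideals have the Artin--Rees property and, by Theorem \ref{generalTheorem} applied to $A/Q$, the hypothesis that $\Loc{A/Q}$ is essentially closed says precisely that $E_{A/Q}(S)$ is locally finite. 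Case (1) requires one preliminary application of the lemma to $B:=A/Q$ itself: the annihilator $\bar\m=\Ann_B(S)$ is a maximal ideal of finite codimension, $B/\bar\m$ is simple artinian so that $E_{B/\bar\m}(S)=S$ is trivially locally finite, and---using that $\bar\m$ has the Artin--Rees property in the affine Noetherian PI algebra $B$---the lemma yields $E_{A/Q}(S)=E_B(S)$ locally finite; a second application over $A$, with the given Artin--Rees ideal $Q$, then gives $E_A(S)$ locally finite.

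For the bootstrap lemma I would argue as follows. The Artin--Rees property of $Q$ gives the Matlis-type fact that an essential extension of a $Q$-torsion module is again $Q$-torsion; as $S$ is killed by $Q$ and is essential in $E_A(S)$, this forces $E_A(S)=\bigcup_{n\ge1}E_n$, where $E_n=\{x\in E_A(S):Q^nx=0\}$. Every finitely generated submodule lies in some $E_n$, so it suffices to prove each $E_n$ locally finite, by induction on $n$. The base case holds since $E_1=\{x:Qx=0\}$ is an injective essential extension of $S$ over $A/Q$, that is $E_1=E_{A/Q}(S)$, locally finite by hypothesis. For the inductive step I would use the left $A$-linear map $\psi\colon E_{n+1}\to\Hom_{A/Q}(Q^n/Q^{n+1},E_1)$, $\psi(x)(\bar q)=qx$, whose kernel is exactly $E_n$. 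The target is a locally finite left $A$-module, the left action being induced by the right $A/Q$-module structure on the finitely generated bimodule $Q^n/Q^{n+1}$: the image of any single $\phi$ equals the $A/Q$-submodule of $E_1$ generated by the values of $\phi$ on finitely many left generators of $Q^n/Q^{n+1}$, hence is finite-dimensional, and the whole orbit $A\phi$ embeds into a finite direct sum of copies of that space. Since locally finite modules over the Noetherian algebra $A$ are closed under submodules, extensions and directed unions, and $E_{n+1}/E_n\cong\operatorname{im}\psi$ is a submodule of this locally finite Hom-module, $E_{n+1}$ is locally finite, completing the induction and the lemma.

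The step I expect to be the main obstacle is the ingredient feeding case (1): that a maximal ideal of finite codimension in an affine Noetherian PI algebra has the Artin--Rees property. This is exactly where the PI hypothesis is indispensable and cannot be softened, since for a non-PI algebra such as $U(\spl_2)$ the finite-codimension maximal ideals must fail Artin--Rees---otherwise the bootstrap argument above would render $\Loc{U(\spl_2)}$ essentially closed, contradicting the statement recorded in the introduction. I would therefore extract this property from the structure theory of fully bounded Noetherian PI algebras rather than attempt a soft argument. A secondary and more routine point is to make the Matlis-type statement precise in the noncommutative setting; here I would either cite the standard formulation or reprove the short argument that, when $Q$ has the Artin--Rees property, the $Q$-torsion submodule of any module is closed under essential extensions.
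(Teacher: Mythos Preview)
Your bootstrap lemma is correct and gives a clean unified engine. For case (2) your route works: polynormal ideals in a Noetherian ring do have the Artin--Rees property, so the lemma applies directly once that is cited. The paper organizes (2) differently: rather than invoking AR for the full polynormal ideal $Q$, Lemma \ref{Ideals-with-normal-generators} inducts on the length of a normalizing sequence, using AR only for the principal ideal generated by the first normal element $x_1$ and then passing to $A/Ax_1$. The two arguments are close in spirit; yours is more conceptual, the paper's avoids relying on AR for $Q$ as a whole.

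For case (1) there is a real gap precisely where you flag it. That a maximal ideal of finite codimension in an affine Noetherian PI algebra has the Artin--Rees property is not something one can simply read off from FBN structure theory; attempting to establish it directly would essentially amount to reproducing Jategaonkar's argument, so the detour through a second bootstrap is not a simplification. The paper sidesteps this entirely: AR for $Q$ gives $Q^nE=0$ for any finitely generated essential extension $E$ of $V$, so $E$ is a module over $\bar A:=A/Q^n$, which is still affine Noetherian PI (it has the nilpotent ideal $Q/Q^n$ with quotient $A/Q$); then Jategaonkar's theorem \cite{Jategaonkar}*{Theorem 2} says injective hulls of simple $\bar A$-modules are Artinian, and since simple modules over an affine PI algebra are finite dimensional \cite{McConnell-Robson}*{13.10.3}, $E$ is finite dimensional. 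If you wish to keep your bootstrap framework, the clean fix is to replace the inner application of the lemma by this direct appeal to Jategaonkar to obtain $E_{A/Q}(S)$ locally finite; one outer bootstrap with $Q$ then finishes.
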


This allows us to conclude in Section 3 that locally finite representations are essentially closed over any (unmixed) Ore extensions $R[x;\sigma]$ and $R[x;\delta]$ of a commutative Noetherian algebra $R$ (see \ref{unmixed_auto}). One of the additional tools here is the Rees ring of an ideal which we examine for certain maximal ideals of an Ore extension. In the final section, we turn to Noetherian Hopf algebras and iterated Hopf Ore extensions and find in Theorem~\ref{Reduction to the trivial module} conditions under which our problem can be decided by just looking at essential extensions of the trivial representation.

\begin{thm}
Let $H$ be a Hopf algebra with invertible antipode. Then $\Loc{H}$ is essentially closed if and only if the injective hull of the trivial representation is locally finite.
\end{thm}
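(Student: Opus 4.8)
The \emph{only if} direction is immediate from Theorem~\ref{generalTheorem}: the trivial representation $\KK$ is a one-dimensional, hence finite dimensional and irreducible, left $H$-module, so if $\Loc{H}$ is essentially closed then the implication (a)$\Rightarrow$(b) of Theorem~\ref{generalTheorem} already gives that $E(\KK)$ is locally finite. The content is therefore the converse, and by the equivalence (b)$\Leftrightarrow$(a) of Theorem~\ref{generalTheorem} it suffices, assuming $E(\KK)$ is locally finite, to prove that $E(V)$ is locally finite for \emph{every} finite dimensional irreducible left $H$-module $V$. The plan is to transport the hypothesis at the trivial module to an arbitrary $V$ by tensoring with $V$, exploiting that a finite dimensional module over a Hopf algebra is rigid.

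The crucial preliminary step is to show that, for any finite dimensional $V$, the functor $-\otimes V\colon \Mod{H}\to\Mod{H}$ with diagonal $H$-action coming from $\Delta$ is exact and \emph{preserves injective modules}. Exactness is clear since $\otimes_\KK V$ is exact over the field. For the preservation of injectives I would exhibit an exact \emph{left} adjoint of $-\otimes V$, and this is exactly where the invertibility of the antipode $S$ is used: it lets one form the left dual ${}^{*}V=\Hom_\KK(V,\KK)$ with $H$-action $(h\cdot f)(v)=f(S^{-1}(h)v)$, for which a direct computation gives $({}^{*}V)^{*}\cong V$ as $H$-modules and, via evaluation and coevaluation, a natural isomorphism $\Hom_H(M\otimes{}^{*}V,N)\cong\Hom_H(M,N\otimes V)$ for all $M,N$. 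Thus $-\otimes{}^{*}V$ is an exact left adjoint of $-\otimes V$, and a functor possessing an exact left adjoint preserves injectives.

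Granting this, the argument is short. Applying the exact functor $-\otimes V$ to the inclusion $\KK\hookrightarrow E(\KK)$ and using $\KK\otimes V\cong V$ produces an $H$-linear embedding $V\hookrightarrow E(\KK)\otimes V$ whose target is injective by the previous paragraph. This target is also locally finite: given $x\in E(\KK)\otimes V$, write $x=\sum_j m_j\otimes v_j$ with $\{v_j\}$ a basis of $V$; then $x$ lies in the finite dimensional $H$-submodule $M\otimes V$, where $M=\sum_j Hm_j\subseteq E(\KK)$ is finite dimensional because $E(\KK)$ is locally finite, so every cyclic submodule of $E(\KK)\otimes V$ is finite dimensional. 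Since $E(\KK)\otimes V$ is injective and contains $V$, it contains a copy of the injective hull $E(V)$ as a direct summand; being a submodule of a locally finite module, $E(V)$ is locally finite. As $V$ was arbitrary, condition (b) of Theorem~\ref{generalTheorem} holds and $\Loc{H}$ is essentially closed.

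The one genuinely computational point, and the place where I expect the real obstacle to sit, is the second paragraph: establishing that $-\otimes V$ preserves injectives. Everything reduces to producing an exact left adjoint, and it is precisely the two-sided rigidity of $V$—the availability of the left dual ${}^{*}V$, which requires $S^{-1}$—that supplies it. The more familiar right dual $V^{*}$ (built from $S$ alone) only yields the adjunction $-\otimes V\dashv -\otimes V^{*}$, so it shows that $-\otimes V^{*}$ preserves injectives, which is the wrong-handed statement; without an inverse antipode one cannot form ${}^{*}V$ and the argument breaks. Checking that the evaluation and coevaluation maps for ${}^{*}V$ are $H$-linear, so that the hom–tensor adjunction descends to $H$-linear maps, is where the care is needed; the remaining steps are formal.
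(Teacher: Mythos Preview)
Your argument is essentially the paper's: Lemma~\ref{isoHopf} establishes the natural isomorphism $\Hom_H(U,E\otimes V)\cong\Hom_H(U\otimes V^*,E)$ (where $V^*$ carries precisely your $S^{-1}$-twisted action, i.e.\ your left dual ${}^*V$) by an explicit $H$-linearity computation, and from this the paper concludes exactly as you do that $E(\KK)\otimes V$ is injective and hence contains $E(V)$, which is therefore locally finite. One small remark: your appeal to Theorem~\ref{generalTheorem} at both ends imports a Noetherian hypothesis not present in the statement---the paper's proof is equally terse on the final passage from ``$E(V)$ locally finite for all finite dimensional $V$'' to ``$\Loc{H}$ essentially closed'', so this is a shared looseness rather than a defect particular to your write-up.
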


Throughout the paper $\KK$ denotes an algebraically closed field of characteristic $0$ and algebras are considered to be $\KK$-algebras.
The paper finishes with a look at Hopf crossed products and affine Hopf algebras of finite Gelfand-Kirillov dimension, using the classification result of Brown, Goodearl, Zhang and others (see \cite{BrownGilmartinZhang,Brown-Zhang, Goodearl-Zhang,Wang-Zhang-Zhuang,Zhou-Shen-Lu}). In particular, we prove that locally finite representations of an algebra $A$ are closed under essential extensions in any of the following cases
\begin{enumerate}
\item $A=R[x;\sigma]$ or $A=R[x;\delta]$ with $R$ commutative affine, $\sigma$ an automorphism and $\delta$ a derivation.
\item $A=R[x;\sigma,\delta]$ is an Ore extension of a commutative Noetherian Hopf algebra $R$.
\item $A=R\#_\sigma H$ is an affine Noetherian Hopf algebra with bijective antipode and a crossed product of a Hopf subalgebra $R$ and commutative Hopf algebra $H$, such that $R^+$ satisfies the strong Artin-Rees property.
\item $A$ is an affine Hopf algebra domain of Gelfand-Kirillov dimension $2$ with $\operatorname{Ext}_H^1(\KK,\KK)\neq 0$.
\end{enumerate}

\section{Locally finite representations of an algebra}

Let $A$ be a $\KK$-algebra, and $A^{*} = \Hom_{\KK}(A,\KK)$ be the space of linear forms on $A$. Denote the category of left $A$-modules by $\Mod{A}$. By a \emph{locally finite (left) representation} of $A$ we mean a left $A$-module $M$ such that $Am$ is a finite dimensional subspace of $M$ for any $m \in M$. We will denote the subcategory of locally finite representations of $A$ by $\Loc{A}$. A particular example of a locally finite representation is the finite dual of $A$, which is the coalgebra
\[A^{\circ} = \{f \in A^{*} : \Ker f \text{ contains an ideal $I$ of $A$ with  } \dim(A/I) < \infty\}\]
whose comultiplication is defined by $\Delta(f) = \sum f_{1}\otimes f_{2} \in A^{\circ} \otimes A^{\circ}$ such that $f(ab) = \sum f_{1}(a)f_{2}(b)$. The functions $f_{i}$ exist since $f$ factors through the finite dimensional algebra $A/I$ and hence the composition $f \circ \mu$ of the multiplication $\mu$ of $A/I$ with $f$ is a linear form from $A/I \otimes A/I$ to $\KK$ and hence belongs to $(A/I \otimes A/I)^{*} = (A/I)^{*} \otimes (A/I)^{*}$. The left $A$-action on $A^{\circ}$ is given by $(a\cdot f)(b) = \sum f_{1}(b)f_{2}(a)$, for all $a, b \in A$ and $f \in A^{\circ}$. Since $I\cdot f = 0$, $A \cdot f$ is finite dimensional. 

We will denote the category of right $A^{\circ}$-comodules by $\Comod{A^{\circ}}$.  Any right $A^{\circ}$-comodule $(M, \rho_{M})$ becomes also a left $A$-module with the action given by $a\cdot m = \sum_{i=1}^{n}m_{i}f_{i}(a)$ for $a\in A$, $m\in M$ and $\rho_{M}(m) = \sum_{i=1}^{n} m_{i} \otimes f_{i} \in M \otimes A^{\circ}$.  In particular $A\cdot m \subseteq \sum_{i=1}^n \KK m_i$ is finite dimensional and hence any right $A^{\circ}$-comodule is a locally finite representation of $A$. Conversely, for any locally finite representation $M \in \Loc{A}$ and element $m \in M$, one can choose a $\KK$-basis $(m_{1}, \ldots, m_{n})$ of $Am$ and elements $f_{1}, \ldots, f_{n} \in A^{*}$ such that $a\cdot m = \sum_{i=1}^{n}f_{i}(a)m_{i}$ for all $a \in A$. Since $\Ann_{A}(Am)$ is a cofinite ideal of $A$ and contained in the kernel of each $f_{i}$, these linear forms are actually elements of the finite dual of $A$, i.e. $f_{i} \in A^{\circ}$. Thus $M$ is a right $A^{\circ}$-comodule with comultiplication on $M$ given by $\rho_{M}(m) := \sum_{i=1}^{n} m_{i} \otimes f_{i} \in M\otimes A^\circ$.  Thus $\Loc{A} = \Comod{A^\circ}$ (see also \cites{BrzezinskiWisbauer, BrownGoodearl, Montgomery, Dascalescu}). 

In general, the category of right comodules over a coalgebra $C$ can be identified with the subcategory $\sigma[{_{C^*}C}]$ of $\Mod{C^*}$, which consists of those $C^*$-modules that are isomorphic to submodules of factor modules of direct sums of copies of $C$ (see \cite{BrzezinskiWisbauer}*{4.3}). Here $C^*=\Hom_\KK(C,\KK)$ is a $\KK$-algebra with the convolution product and acts on $C$ by $f\cdot c = \sum_{(c)} c_1f(c_2)$, for all $c\in C$ and $f\in C^*$. Therefore, the category of locally finite representations over $A$ is identified as
\[\Loc{A} = \Comod{A^\circ} =  \sigma[ _{(A^\circ)^*}{A^\circ}] = \sigma[ _A{A^\circ}] \subseteq \Mod{A},\]
where the last equality results from the inclusion of $A$ as a subalgebra of $(A^\circ)^\circ$. 
Note that categories of type $\sigma[_RM]$ (termed Wisbauer categories), for a left $R$-module $M$ over  a ring $R$, are known to be  Grothendieck categories and hence have enough injective modules. However, in general the injective hulls in $\sigma[_RM]$ are properly contained in the injective hulls in $\Mod{R}$.

Note that essential extensions of locally finite modules may not be locally finite over an Ore extension of a not necessarily commutative Noetherian ring. Neither need locally finite representations be closed under essential extensions over Ore extension of a Noetherian $\KK$-algebra $R$ where $\Loc{R}$ is essentially closed. The following applies in particular to the enveloping algebra of semisimple Lie algebras.

\begin{lem}\label{semisimple_algebras_are_bad}
Let $A$ be an augmented $\KK$-algebra that is a domain, such that all finite dimensional representations of $A$ are completely reducible. If $\Loc{A}$ is  essentially closed, then $A=\KK$.
\end{lem}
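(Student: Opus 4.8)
The plan is to extract from the hypotheses the strong statement that the \emph{trivial} module $\KK$ (with $A$ acting through the augmentation $\epsilon\colon A\to\KK$) is injective in $\Mod{A}$, and then to play this off against the assumption that $A$ is a domain in order to force $A=\KK$.

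First I would record that complete reducibility of finite dimensional representations propagates to all of $\Loc{A}$: a locally finite module is the sum of the finite dimensional submodules $Am$ (one for each of its elements $m$), each such submodule is completely reducible by hypothesis, and a sum of semisimple submodules is again semisimple. Hence every object of $\Loc{A}$ is semisimple. Now $\KK$ is a one-dimensional, hence finite dimensional and irreducible, representation, so $\KK\in\Loc{A}$; since $\Loc{A}$ is essentially closed, its injective hull $E(\KK)$ computed in $\Mod{A}$ again lies in $\Loc{A}$ and is therefore semisimple. But $E(\KK)$ is an essential extension of the simple module $\KK$: writing the semisimple module $E(\KK)=\KK\oplus N$, essentiality forces $N=0$, so $E(\KK)=\KK$. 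In other words $\KK$ is an injective left $A$-module.

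Next I would exploit the domain property. Suppose towards a contradiction that $A\neq\KK$; then the augmentation ideal $A^+=\Ker\epsilon$ is nonzero (as $A=\KK 1\oplus A^+$), so we may fix $0\neq a\in A^+$. Because $A$ has no zero divisors, right multiplication $r_a\colon A\to A$, $x\mapsto xa$, is an injective homomorphism of left $A$-modules and thus identifies $A$ with the left ideal $Aa$. I would then examine the restriction map $\Hom_A(A,\KK)\to\Hom_A(Aa,\KK)$ attached to the inclusion $Aa\hookrightarrow A$. Every $A$-linear map $A\to\KK$ sends $x\mapsto x\cdot f(1)=\epsilon(x)f(1)$, so $\Hom_A(A,\KK)=\KK\epsilon$, whence $\Hom_A(Aa,\KK)\cong\Hom_A(A,\KK)\neq 0$; on the other hand $Aa\subseteq A^+$, so $\epsilon$ vanishes on $Aa$ and the restriction map is identically zero, hence not surjective. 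Injectivity of $\KK$ requires this restriction map to be surjective, giving the desired contradiction. Therefore $A^+=0$ and $A=\KK$.

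The step I expect to carry the real weight is the reduction to ``$\KK$ is injective'': it is precisely here that essential closedness (to bring $E(\KK)$, computed in $\Mod{A}$, back into $\Loc{A}$) and complete reducibility (to collapse $E(\KK)$ onto $\KK$) are both consumed, and one should be mindful that injective hulls in $\Loc{A}=\sigma[{}_{A}A^{\circ}]$ need not agree with those in $\Mod{A}$. Once $\KK$ is known to be injective, the domain computation is short, the only trick being to test injectivity against the single inclusion $Aa\hookrightarrow A$.
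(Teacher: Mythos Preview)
Your proof is correct and follows essentially the same line as the paper's: first use essential closedness together with complete reducibility to conclude that $E(\KK)=\KK$, hence $\KK$ is injective in $\Mod{A}$, and then derive a contradiction from the domain hypothesis. The only cosmetic difference is in this second step: the paper invokes divisibility of injective modules over a domain (so $a\cdot\KK=\KK$ for all nonzero $a$, contradicting $a\cdot\KK=0$ for $a\in\Ker\epsilon$), whereas you unpack exactly this argument by hand via the restriction map $\Hom_A(A,\KK)\to\Hom_A(Aa,\KK)$.
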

\begin{proof}
Let $\alpha:A\rightarrow \KK$ be any algebra homomorphism and consider ${_\alpha}\KK$, a simple representation of $A$ via $\alpha$. If $\Loc{A}$ is essentially closed, then the injective hull $E(\KK)$ of $\KK$  is  locally finite. Since by hypothesis any finitely generated submodule of $E(\KK)$ is completely reducible, we have $E(\KK)=\KK$ and hence $\KK$ is an injective left $A$-module.
As an injective $A$-module, $\KK$ is divisible, i.e. for any non-zero  $a \in A$, one has  $a \cdot \KK = \KK$. Since for any nonzero element $a\in \mathrm{Ker}(\alpha)$, $a\cdot \KK = 0$ holds, we must have $\mathrm{Ker}(\alpha) = 0$, i.e. $A=\KK$.
\end{proof}

\begin{example}[Semisimple Lie algebras]
Let $\g = \KK e \oplus \KK h \oplus \KK f$ be the semisimple Lie algebra $\mathfrak{sl}_2$ over $\KK$ with the standard relations $[e,f] = h$, $[e,h] = -2e$, $[f,h] = 2f$. Then the universal enveloping algebra $U(\g)$ is an iterated Ore extension $U(\g) = D[e;\sigma, \delta]$ and $D=\KK[f][h; id, -2f \frac{\partial}{\partial f}]$, with automorphism $\sigma$ and $\sigma$-derivation $\delta$ of $D$ given by:
\[\sigma(f)=f, \qquad \sigma(h)=h-2, \qquad \delta(f)=h, \qquad \delta(h)=0.\]
By Weyl's Theorem, finite dimensional left $U(\g)$-modules are completely reducible in case $\chr(\KK)=0$. Moreover $U(\g)$ is a Noetherian domain with algebra homomorphism $\epsilon:U(\g)\rightarrow \KK$ given by 
$\epsilon(h)=\epsilon(e)=\epsilon(f)=0$. Thus by Lemma \ref{semisimple_algebras_are_bad}, $\Loc{U(\g)}$ is not essentially closed. Note that $D\simeq U(\h)$ is isomorphic to the enveloping algebra of the 2-dimensional solvable Lie algebra $\h$. By Donkin's result \cite{Donkin82}*{Proposition 2.2.2}, $\Loc{U(\h)}$ is essentially closed, while $\Loc{U(\g)}$ is not. This shows that the property of locally finite representations being closed under essential extensions is not always preserved when passing to Ore extensions. 

Although finitely generated essential extensions of irreducible $U(\g)$-modules are Artinian, they do not need to be finite dimensional since there are non-split extensions of finite dimensional and infinite dimensional irreducible representations:  let $I=U(\g)e^2 + U(\g)fe + U(\g)(h+2)$ and $J=U(\g)e + U(\g)(h+2)$ be left ideals of $U(\g)$.  We leave it to the reader to verify that 
\[\begin{CD} 0 @>>> J/I @>>> U(\g)/ I  @>>> U(\g)/J  @>>> 0 \end{CD}\]
is a non-split short exact sequence, with $J/I\simeq \KK$ being the trivial representation and $U(\g)/J$ being an infinite dimensional representation of $U(\g)$.
\end{example}

The last example shows that for a Noetherian algebra $A$, such that  $\Loc{A}$ is essentially closed then $\mathrm{Ext}^1_A(V,W)=0$ for all finite dimensional representations $W$ and infinite dimensional representations $V$ of $A$.

\section{When locally finite representations are essentially closed}\label{when locally finite reps are essentially closed}

As in the previous section, let us denote by $A^\circ$ the finite dual of a $\KK$-algebra $A$. 
\subsection{Necessary and sufficient conditions}
For any left $A$-module $M$ we set $\mathrm{Loc}(M)$ to be the sum of all finite dimensional submodules of $M$ and define the exact functor $F:\Mod{A} \rightarrow \Loc{A}$ given by $F(M):=\mathrm{Loc}(M)$. Moreover for any $M\in \Loc{A}$ and $E\in \Mod{A}$ one has $\mathrm{Hom}_A(M,E)=\mathrm{Hom}_A(M,F(E))$. Thus if $E$ is an injective left $A$-module, then $F(E)=\mathrm{Loc}(E)$ is an injective object in $\Loc{A}$.

\begin{thm}\label{generalTheorem} The following statements are equivalent for a Noetherian $\KK$-algebra $A$.
\begin{enumerate}
\item[(a)] $\Loc{A}$ is essentially closed.
\item[(b)] the injective hull of any finite dimensional irreducible representation over $A$ is locally finite.
\item[(c)] $\mathrm{Loc}(E)$ is a direct summand for any injective left $A$-module $E$.
\item[(d)] $A^\circ$ is an injective left $A$-module.
\end{enumerate}
\end{thm}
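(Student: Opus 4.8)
The plan is to prove the cycle of implications (a) $\Rightarrow$ (b) $\Rightarrow$ (c) $\Rightarrow$ (d) $\Rightarrow$ (a). Two structural facts will be used repeatedly: since $A$ is Noetherian, every injective left $A$-module is a direct sum of indecomposable injectives (Matlis--Papp), and conversely any direct sum of injective left $A$-modules is again injective; and, as recorded above, $\Loc{A} = \Comod{A^\circ}$ is a locally finite Grothendieck category in which $A^\circ$ is injective. The implication (a) $\Rightarrow$ (b) is immediate, since a finite dimensional irreducible representation is locally finite, and essential closure forces its injective hull to stay in $\Loc{A}$.

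For (b) $\Rightarrow$ (c), let $E$ be an injective left $A$-module and write $E = \bigoplus_i E_i$ with each $E_i$ an indecomposable injective, so that $E_i = E(U_i)$ for a uniform module $U_i$. Since a finite dimensional submodule of $\bigoplus_i E_i$ has finite support and finite dimensional projections, $\mathrm{Loc}$ commutes with this decomposition: $\mathrm{Loc}(E) = \bigoplus_i \mathrm{Loc}(E_i)$. It therefore suffices to treat each $E_i$. If $E_i$ has no nonzero finite dimensional submodule then $\mathrm{Loc}(E_i) = 0$. Otherwise $E_i$ contains a simple submodule $S$, necessarily finite dimensional, and since $E_i$ is an indecomposable injective containing $S$ we get $E_i = E(S)$; by (b) this injective hull is locally finite, so $\mathrm{Loc}(E_i) = E_i$. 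In either case $\mathrm{Loc}(E_i)$ is a direct summand of $E_i$, whence $\mathrm{Loc}(E)$ is a direct summand of $E$.

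For (c) $\Rightarrow$ (d), I would realize $A^\circ$ as the locally finite part of the full dual. Writing $A^{*} = \Hom_\KK(A_A,\KK)$ for the $\KK$-dual of the free right $A$-module $A$, the left $A$-action $(a\cdot f)(b) = f(ba)$ agrees with the one on $A^\circ$, and $A^{*}$ is injective as a left $A$-module because $\Hom_\KK(-,\KK)$ carries the flat (free) right module $A$ to an injective left module. One checks $\mathrm{Loc}(A^{*}) = A^\circ$: indeed $A^\circ$ is locally finite, and conversely if $A\cdot f$ is finite dimensional then the cofinite left ideal $\{a : a\cdot f = 0\} \subseteq \ker f$ contains a cofinite two-sided ideal, so $f \in A^\circ$. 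Now (c) applied to the injective module $A^{*}$ shows that $A^\circ = \mathrm{Loc}(A^{*})$ is a direct summand of $A^{*}$, hence injective.

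Finally, for (d) $\Rightarrow$ (a), take $M \in \Loc{A}$. Its injective hull inside $\Loc{A}$ is $\mathrm{Loc}(E(M))$: this module is locally finite, is injective in $\Loc{A}$ (being the locally finite part of an injective module, as noted above), and is an essential extension of $M$. Because $\Comod{A^\circ}$ is locally finite, $\mathrm{Loc}(E(M))$ decomposes as a direct sum of indecomposable injectives of $\Loc{A}$, each of which is the injective hull of a simple comodule and hence a direct summand of the injective cogenerator $A^\circ$. By (d) each such summand is injective in $\Mod{A}$, and since $A$ is Noetherian their direct sum $\mathrm{Loc}(E(M))$ is injective in $\Mod{A}$ as well. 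As $\mathrm{Loc}(E(M))$ is an essential submodule of $E(M)$ and is now $\Mod{A}$-injective, it must coincide with $E(M)$; thus $E(M)$ is locally finite and $\Loc{A}$ is essentially closed. I expect this last step to be the main obstacle: it is exactly here that one must bridge injectivity in $\sigma[{}_A A^\circ] = \Loc{A}$ and injectivity in the ambient $\Mod{A}$ --- two notions that differ in general --- and the argument relies essentially on both the comodule structure theory (every injective comodule is built from summands of $A^\circ$) and the Noetherian hypothesis (direct sums of injectives are injective).
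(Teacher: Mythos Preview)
Your proof is correct, but the logical architecture and several of the individual arguments differ from the paper's. The paper does not argue a single cycle; instead it proves (a)$\Leftrightarrow$(b), (a)$\Leftrightarrow$(c), and (a)$\Leftrightarrow$(d) separately, with (a) as a hub. For (b)$\Rightarrow$(a) the paper gives a direct reduction: for an essential extension $V\subseteq E$ with $V$ locally finite, each cyclic $Ax\subseteq E$ is an essential extension of the finite dimensional module $V\cap Ax$ (Noetherian), hence of its socle, hence embeds into an injective hull of a finite dimensional simple. For (a)$\Rightarrow$(c) the paper observes that $\mathrm{Loc}(E)$ is already injective in $\Loc{A}$, so under (a) it equals its own $\Mod{A}$-injective hull and therefore splits. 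For (d)$\Rightarrow$(a) the paper simply embeds any $V\in\Loc{A}$ into $(A^\circ)^{(\Lambda)}$ (every comodule sits in a free comodule), notes this is $\Mod{A}$-injective by (d) plus Noetherianity, and reads off that $E(V)$ is a summand and hence locally finite.

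By contrast, your (b)$\Rightarrow$(c) goes through the Matlis decomposition of injectives over a Noetherian ring and the compatibility of $\mathrm{Loc}$ with direct sums; your (d)$\Rightarrow$(a) goes through the structure theory of injectives in $\Comod{A^\circ}$ (socle decomposition, each indecomposable injective a summand of $A^\circ$). Both are valid but heavier than the paper's routes. The genuinely new ingredient in your argument is (c)$\Rightarrow$(d): realising $A^\circ$ as $\mathrm{Loc}(A^*)$ for the concrete $\Mod{A}$-injective $A^*=\Hom_\KK(A,\KK)$ and then applying (c) to $A^*$. This step is not in the paper (which gets to (d) only from (a)) and is a clean, self-contained way to extract (d) directly from (c). Conversely, the paper's arguments for (b)$\Rightarrow$(a) and (d)$\Rightarrow$(a) are shorter and avoid both the Matlis decomposition and the internal injective theory of $\Comod{A^\circ}$.
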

\begin{proof}
$(a) \Rightarrow (b)$ is trivial.

$(b)\Rightarrow (a)$ Let $V$ be a locally finite left $A$-module and let $E$ be an essential extension of $V$. We first show that $V$ can be assumed to be finite dimensional. Note that $E$ is locally finite if every cyclic submodule of it is finite dimensional. If $x$ is a nonzero element of $E$, then $Ax$ is an essential extension of $V \cap Ax$. Since $A$ is Noetherian, $V \cap Ax$ is a finitely generated submodule of $V$ and hence it is finite dimensional. Thus every cyclic submodule of $E$ is an essential extension of a finite dimensional submodule of $V$, and hence it suffices to consider the case in which $V$ is finite dimensional.

Now assume that $V$ is finite dimensional and let $S$ be its (finite dimensional) socle. Since $V$ is Artinian, it has an essential socle and any essential extension $E$ of $V$ is also an essential extension of $S$. Thus $E$ can be embedded into the injective hull of $S$, which is by hypothesis locally finite.

$(a)\Leftrightarrow (d)$ Following the explanation in the introduction, the category of locally finite representations $\Loc{A}$ is equal to the category of right $A^\circ$-comodules. By \cite{BrzezinskiWisbauer}*{3.21}, $A^\circ$ is an injective object in $\Comod{A^\circ}$. If $\Loc{A}$ is closed under injective hulls in $\Mod{A}$, then $A^\circ$ is equal to its injective hull as left $A$-module. Conversely, if $A^\circ$ is injective in $\Mod{A}$ and $V \in \Loc{A}$, then since every comodule $V$ over $A^\circ$ is contained in a direct sum of copies of $A^\circ$ (see \cite{BrzezinskiWisbauer}*{9.1}) we get an embedding $V \subseteq \left(A^\circ\right)^{(\Lambda)}$ for some set $\Lambda$. As $A$ is left Noetherian, $\left(A^\circ\right)^{(\Lambda)}$ is also injective. Thus the injective hull $E(V)$ of $V$ in $\Mod{A}$ is a direct summand of $\left(A^\circ\right)^{(\Lambda)}$ and therefore also locally finite.

$(a)\Leftrightarrow (c)$. Let $E$ be any injective left $A$-module and $E(\mathrm{Loc}(E))$  the injective hull of $\mathrm{Loc}(E)$ in $\Mod{A}$. By hypothesis $E(\mathrm{Loc}(E))$  is also locally finite and belongs to $\Loc{A}$. Since $F(E)=\mathrm{Loc}(E)$ is injective in $\Loc{A}$, as seen above, and essential in $E(\mathrm{Loc}(E))$, these two modules must coincide. Hence $\mathrm{Loc}(E)=E(\mathrm{Loc}(E))$ is injective in $\Mod{A}$ and splits off in $E$ as a direct summand. Conversely, let $M$ be a locally finite left $A$-module with injective hull $E$ in $\Mod{A}$. Then $M\subseteq \mathrm{Loc}(E)$ which is injective by $(c)$ and hence equal to $E$.
\end{proof}

Theorem~\ref{generalTheorem} shows that the question of determining whether $\Loc{A}$ is essentially closed for a Noetherian $\KK$-algebra can be reduced to the study of essential extensions of finite dimensional irreducible representations. Over an algebraically closed field $\KK$ and a Noetherian $\KK$-algebra $A$ with all maximal ideals being completely prime, we can go a step further and reduce the problem to the study of essential extensions of one-dimensional representations. First we note that if  $V$ is a finite dimensional irreducible representation over a $\KK$-algebra $A$ with ring of endomorphisms  $D = \End(_A V)$, then $D$ is a finite dimensional division algebra over $\KK$ and $\End(V_D)$ is a matrix ring over $D$.  Let $P = \Ann_A(V)$ be the annihilator of $V$. Then $A/P \cong \End(V_D)$ by the Jacobson Density Theorem and $P$ is in particular a maximal ideal of $A$.

\begin{cor}\label{reduction_one_dimensional} Let $A$ be a Noetherian $\KK$-algebra over an algebraically closed field $\KK$ such that maximal ideals are completely prime. Then $\Loc{A}$ is essentially closed if and only if the injective hull of one-dimensional representations are locally finite.
\end{cor}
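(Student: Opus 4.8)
The plan is to reduce the corollary directly to Theorem~\ref{generalTheorem} by proving that, under the stated hypotheses, \emph{every} finite dimensional irreducible representation of $A$ is in fact one-dimensional. Once this reduction is in place, condition (b) of Theorem~\ref{generalTheorem} reads precisely as the requirement that the injective hull of every one-dimensional representation be locally finite, and the equivalence (a)$\Leftrightarrow$(b) yields the claim in both directions.

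To carry out the reduction I would start from a finite dimensional irreducible representation $V$ and use the facts recorded just before the statement: $D = \End(_A V)$ is a finite dimensional division algebra over $\KK$, the annihilator $P = \Ann_A(V)$ is a maximal ideal, and the Jacobson Density Theorem gives $A/P \cong \End(V_D)$, a full matrix ring $M_n(D)$ for some $n \geq 1$. The hypothesis that maximal ideals are completely prime says exactly that $A/P$ is a domain; but $M_n(D)$ has zero divisors as soon as $n \geq 2$ (for instance the matrix units $E_{11}$ and $E_{22}$ multiply to zero). Hence $n = 1$ and $A/P \cong D$.

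The second ingredient is that $\KK$ is algebraically closed: for any $d \in D$ the subalgebra $\KK[d] \subseteq D$ is a commutative finite dimensional domain, hence a field, and being a finite extension of the algebraically closed field $\KK$ it must equal $\KK$, so $d \in \KK$ and therefore $D = \KK$. Combined with $n = 1$ this gives $A/P \cong D = \KK$, and since $V$ is a simple module over the field $A/P = \KK$ it is one-dimensional.

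With this dimension reduction in hand the corollary follows immediately. One direction is trivial, since one-dimensional representations are in particular finite dimensional and irreducible, so Theorem~\ref{generalTheorem}(b) already forces their injective hulls to be locally finite whenever $\Loc{A}$ is essentially closed. Conversely, the hypothesis on one-dimensional representations is, under our assumptions, the entire content of condition (b), whence Theorem~\ref{generalTheorem} returns that $\Loc{A}$ is essentially closed. I do not expect a genuine obstacle here: the argument is a clean combination of two standard facts---a full matrix ring is a domain only in the $1 \times 1$ case, and a finite dimensional division algebra over an algebraically closed field is trivial. The only point requiring a little care is to confirm that the annihilators $P$ are indeed among the maximal ideals to which the complete-primeness hypothesis applies, which holds because $A/P$ embeds into $\End_\KK(V)$ and is therefore finite dimensional.
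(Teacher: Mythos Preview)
Your proposal is correct and follows essentially the same approach as the paper: both arguments show that under the hypotheses every finite dimensional irreducible representation is one-dimensional (using that $A/P$ is a finite dimensional division algebra over the algebraically closed field $\KK$, hence equal to $\KK$), and then invoke Theorem~\ref{generalTheorem}(b). Your version simply spells out a few more details, such as the explicit $M_n(D)$ step to rule out $n\geq 2$, that the paper's proof leaves implicit.
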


\begin{proof}
Let $P=\Ann_A(V)$ the annihilator of a finite dimensional irreducible representation over $A$, which as mentioned before is a maximal ideal and  by hypothesis completely prime. Thus $A/P$ is a finite dimensional division algebra over $\KK$ and since $\KK$ is algebraically closed, 
$V=A/P=\KK$  is one-dimensional. Hence the corollary follows from Theorem~\ref{generalTheorem}(b).
\end{proof}

Among the examples of algebras in which maximal ideals are completely prime are certain quantum affine spaces:

\begin{defn}\label{quantumaffinespace} The {\emph{quantum affine space}} $\mathcal{O}_{\mathbf{q}}(\KK^n)$ is the $\KK$-algebra generated by $x_1, \ldots, x_n$ and relations 
$$ x_ix_j = q_{ij} x_jx_i$$
for all $1\leq i,j \leq n$, where $q_{ij}\in \KK^\times$, $q_{ii}=1$ and $q_{ji}=q_{ij}^{-1}$ (see \cite{Goodearl-Warfield}*{p.14}). 
\end{defn}

\begin{thm}[{Goodearl-Letzter, \cite{Goodearl-Letzter}*{Theorem 2.1}}]\label{GL}
Let $\qas$ be a quantum affine space for some data $\bq=(q_{ij})$. Assume that the subgroup $\Lambda$ of $\KK^\times$ generated by the $q_{ij}$ is torsionfree. Then all of the prime ideals of $\qas$ are completely prime.
\end{thm}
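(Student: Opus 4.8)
The plan is to exploit the rational action of the torus $H=(\KK^\times)^n$ on $A:=\qas$ by $\KK$-algebra automorphisms, where $t=(t_1,\dots,t_n)$ scales the generators via $t\cdot x_i=t_ix_i$, and then to run the Goodearl--Letzter $H$-stratification of $\operatorname{Spec}(A)$. Since distinct monomials afford distinct characters of $H$, the $H$-eigenvectors of $A$ are exactly the scalar multiples of monomials, so every $H$-stable ideal is generated by monomials; from this one checks that the $H$-primes of $A$ are precisely the ideals $J_S=\langle x_i : i\in S\rangle$ for $S\subseteq\{1,\dots,n\}$. Each factor $A/J_S\cong\cO_{\bq'}(\KK^{\,n-|S|})$ is again a quantum affine space in the remaining variables, hence an iterated Ore extension of $\KK$ and in particular a domain; so every $H$-prime is already completely prime. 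This is the base for the stratification argument.

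Next I would recall that, because $A$ is Noetherian with only finitely many ($2^n$) $H$-primes, the stratification theorem partitions $\operatorname{Spec}(A)=\bigsqcup_S\operatorname{Spec}_{J_S}(A)$, where $\operatorname{Spec}_{J_S}(A)$ consists of the primes $P$ whose largest $H$-stable subideal $\bigcap_{h\in H}h\cdot P$ equals $J_S$. Concretely $A/J_S$ is a domain, its nonzero $H$-eigenvectors form an Ore set $\mathcal{E}_S$ of regular normal elements, and localizing yields a \emph{quantum torus} $A_S:=(A/J_S)[\mathcal{E}_S^{-1}]\cong\cO_{\bq'}((\KK^\times)^{\,n-|S|})$. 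The primes in the stratum are exactly those $P\supseteq J_S$ meeting no eigenvector outside $J_S$, and $P\mapsto PA_S$ is a bijection onto $\operatorname{Spec}(A_S)$ under which $A/P$ localizes to $A_S/PA_S$. Since a normal element lying outside a prime $P$ is regular modulo $P$, the localization map $A/P\hookrightarrow A_S/PA_S$ is injective; hence $A/P$ is a domain as soon as $A_S/PA_S$ is. Thus the theorem reduces to a single claim about quantum tori.

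The crux, and the step I expect to be the main obstacle, is to prove that \emph{every prime of a quantum torus $T=\cO_{\bq'}((\KK^\times)^m)$ with torsion-free parameter group is completely prime}; note the parameters of each $A_S$ form a subset of the $q_{ij}$, so their group is a subgroup of $\Lambda$ and stays torsion-free. I would use the structure of $T$ as the twisted group algebra of $\ZZ^m$ under the bicharacter $\beta(e_i,e_j)=q'_{ij}$: the centre $Z(T)$ is the commutative Laurent subalgebra spanned by the monomials indexed by the radical $S_0=\{a:\beta(a,-)\equiv 1\}$, and $T$ is a free $Z(T)$-module. Torsion-freeness of $\Lambda$ forces $G=\ZZ^m/S_0$ to be torsion-free, hence free, carrying a nondegenerate alternating bicharacter $\bar\beta$ into $\KK^\times$. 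For a prime $\mathfrak{p}$ of $Z(T)$ one then shows $\mathfrak{p}T$ is prime and $T/\mathfrak{p}T$ embeds into $T\otimes_{Z(T)}\operatorname{Frac}(Z(T)/\mathfrak{p})$, which is the quantum torus over the residue field attached to the nondegenerate $\bar\beta$; nondegeneracy makes it simple, and every quantum torus is a domain, so this base change is a simple domain. Consequently $T/\mathfrak{p}T$ is a domain, $\mathfrak{p}T$ is completely prime, and the contraction--extension bijection $P\mapsto P\cap Z(T)$ shows these exhaust $\operatorname{Spec}(T)$. The delicate points to verify are the identification of the centre together with freeness over it, and the passage from torsion-freeness of $\Lambda$ to that of $G$; once these are secured, complete primeness propagates back through the stratification to all of $\operatorname{Spec}(A)$.
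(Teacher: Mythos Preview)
The paper does not supply its own proof of this statement: it is quoted as Theorem~2.1 of Goodearl--Letzter and used as a black box (for instance in the proof of Theorem~\ref{CorGL}). So there is nothing in the present paper to compare your argument against.

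That said, your outline is a correct proof. The $H$-stratification reduction to quantum tori is standard, and the quantum-torus step is handled the way one expects: the centre of $T$ is the Laurent subalgebra on the radical $S_0$, torsion-freeness of $\Lambda$ forces $\ZZ^m/S_0$ to be torsion-free (if $ka\in S_0$ then every $\beta(a,b)$ is a $k$-th root of unity in $\Lambda$, hence $1$), and base-changing to $\operatorname{Frac}(Z(T)/\mathfrak{p})$ gives a simple domain, so $\mathfrak{p}T$ is completely prime and contraction--extension is bijective on spectra. It is worth noting that the original 1994 Goodearl--Letzter argument predates the stratification machinery and proceeds more directly via iterated Ore extensions with $q$-skew derivations (their Theorem~2.3, quoted here as Theorem~\ref{GL_SIG}, is the general form); your route is the now-standard repackaging of that result through the torus action and gives a cleaner conceptual picture at the cost of importing the stratification theorem.
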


\subsection{Sufficient conditions}
The second part of this section will gather some sufficient conditions for $\Loc{A}$ to be essentially closed. This is for example the case if all irreducible representations are finite dimensional and $A$ satisfies the second layer condition\footnote{We refer the reader to \cite{Goodearl-Warfield} for the definition of the second layer condition.} (see  \cite{BrownCarvalhoMatczuk}*{Theorem 3.3}).

Recall that an ideal $I$ of a ring $A$ is said to have the \emph{left Artin-Rees property} if for every finitely generated left $A$-module $M$ and every submodule $N \leq M$, there is a positive integer $n$ such that $N \cap I^{n}M \leq IN$.  
An ideal generated by normal elements has the Artin-Rees property by  \cite{McConnell-Robson}*{4.2.6}.
A sequence $x_1, \ldots, x_n$ of elements of  a ring $R$ is called a \emph{normalizing sequence} if for each $j\in \{0, \cdots, n-1\}$ the image of $x_{j+1}$ in $R/\sum_{i=1}^j Rx_i$ is normal. An ideal generated by a normalizing sequence is called \emph{polynormal} (see \cite{McConnell-Robson}*{4.1.13}). The following is an adaptation of an argument by Jategaonkar:

\begin{lem}\label{Ideals-with-normal-generators}
Let $A$ be a Noetherian $\KK$-algebra, $V$ a finite dimensional irreducible representation of $A$. Suppose that  $P=\Ann_{A}(V)$ contains a polynormal ideal $Q$. Then a finitely generated essential extension $E$ of $V$ is finite dimensional if and only if $\Ann_E(Q)$ is finite dimensional.
\end{lem}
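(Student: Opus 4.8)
The forward implication is immediate: if $E$ is finite dimensional, then so is its submodule $\Ann_E(Q)$. The content is therefore the converse, and the plan is to assume $\dim_\KK\Ann_E(Q)<\infty$ and bound $\dim_\KK E$. The first step is to force a nilpotent action of $Q$ on $E$. Since $Q\subseteq P=\Ann_A(V)$ we have $QV=0$, and $V$ is essential in the finitely generated module $E$. A polynormal ideal has the left Artin-Rees property (this is where polynormality is used; cf. \cite{McConnell-Robson}*{4.2.6}), so applied to the pair $V\le E$ there is an integer $n$ with $V\cap Q^nE\le QV=0$. As $V$ is essential and $Q^nE$ is a submodule of $E$, this forces $Q^nE=0$. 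Thus $E$ carries the finite annihilator filtration
\[ 0\subseteq \Ann_E(Q)\subseteq \Ann_E(Q^2)\subseteq\cdots\subseteq \Ann_E(Q^n)=E, \]
and it suffices to bound the dimension of each successive layer in terms of $W:=\Ann_E(Q)$.

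Here I would exploit the normalizing generators $x_1,\ldots,x_m$ of $Q$. Two facts about polynormal ideals make the argument run: the left and two-sided ideals generated by a normalizing sequence coincide, so $Q=\sum_{j=1}^m Ax_j$, and consequently $Q^ix_j\subseteq Q^iQ=Q^{i+1}$ for every $j$. The second inclusion gives, for $x\in\Ann_E(Q^{i+1})$, that $x_jx\in\Ann_E(Q^i)$, so left multiplication by the generators defines a $\KK$-linear map $\Phi\colon\Ann_E(Q^{i+1})\to\Ann_E(Q^i)^m$, $x\mapsto(x_1x,\ldots,x_mx)$. Using $Q=\sum_j Ax_j$, its kernel is exactly $\{x:Qx=0\}=\Ann_E(Q)=W$, so $\Phi$ induces an injection $\Ann_E(Q^{i+1})/W\hookrightarrow\Ann_E(Q^i)^m$ and hence $\dim_\KK\Ann_E(Q^{i+1})\le\dim_\KK W+m\,\dim_\KK\Ann_E(Q^i)$. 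Starting from $\dim_\KK\Ann_E(Q)=\dim_\KK W<\infty$, this recursion bounds every $\Ann_E(Q^i)$ by a finite quantity, and therefore $\dim_\KK E=\dim_\KK\Ann_E(Q^n)<\infty$.

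The routine reductions (the trivial direction and the passage to $Q^nE=0$) are straightforward; the main obstacle is the bookkeeping in the polynormal step, namely verifying that the normalizing sequence lets one replace the two-sided ideal $Q$ by the left ideal $\sum_j Ax_j$ and that $\Phi$ has kernel precisely $W$. Once these identities are in place the dimension count is purely formal. An alternative to using all generators simultaneously is to induct on the length $m$ of the normalizing sequence, peeling off the normal element $x_1$, for which the single-generator case yields the cleaner estimate $\dim_\KK\bigl(\Ann_E(Q^{i+1})/\Ann_E(Q^i)\bigr)\le\dim_\KK\bigl(\Ann_E(Q^i)/\Ann_E(Q^{i-1})\bigr)$ via multiplication by $x_1$; however the direct map $\Phi$ above avoids having to pass to quotient rings on which $E$ need not be a module.
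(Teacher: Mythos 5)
Your proof collapses at its first substantive step: the claim that a polynormal ideal has the left Artin-Rees property is false, and \cite{McConnell-Robson}*{4.2.6} does not say this. As the paper itself states it, that result covers ideals generated by elements that are normal \emph{in $A$ itself} (a normalizing set of generators); in a polynormal ideal only $x_1$ is normal in $A$, while each $x_{j+1}$ is normal merely modulo $\sum_{i\leq j}Ax_i$, and the difference is fatal. Concretely, let $A=U(\g)$ for the two-dimensional solvable Lie algebra with $[x,y]=y$, and let $Q=\omega=Ax+Ay$ be the augmentation ideal, which is polynormal with normalizing sequence $(y,x)$. Then $y=xy-yx\in\omega^2$, and inductively $y\in\omega^n$ for all $n$; taking $M=A$ and $N=Ay$ gives $N\cap\omega^nM=Ay\not\subseteq\omega N=\omega y$ for every $n$ (if $y=wy$ then $w=1\notin\omega$, since $A$ is a domain). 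So $\omega$ fails the Artin-Rees property, exactly as predicted by McConnell's theorem, cited in the paper, that every ideal of $U(\g)$ has the Artin-Rees property only when $\g$ is nilpotent.

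Consequently your intermediate conclusion $Q^nE=0$, and with it the claim that the filtration $\Ann_E(Q^i)$ exhausts $E$, is false in general, not merely unproved. With $A$ and $Q=\omega$ as above, take $V={_\epsilon\KK}=\KK v$ and $E=\KK v\oplus\KK w$ the non-split extension defined by $xv=yv=0$, $xw=-w$, $yw=v$; this is a finitely generated essential extension of $V$ with $Q\subseteq\Ann_A(V)$, yet $\omega E=E$, hence $V\cap Q^nE=V\neq 0$ and $\Ann_E(Q^n)=V$ for all $n$: the annihilator filtration stabilizes at $V$ and never reaches $E$. Your linear algebra afterwards (the map $\Phi$, the identity $\ker\Phi=W$ via $Q=\sum_jAx_j$, and the dimension recursion) is correct, but it rests entirely on the false nilpotency statement. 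This is precisely why the paper's proof inducts on the length of the normalizing sequence: it invokes the Artin-Rees property only for the principal ideal $Ax_1$ of the one generator that is genuinely normal in $A$, and then passes to $A'=A/Ax_1$ acting on $E'=\Ann_E(x_1)$. Note that $E'$ \emph{is} an $A'$-module, so the objection on which you dismissed the induction route (that $E$ need not be a module over the quotient ring) does not arise: one restricts to the annihilator of $x_1$, not to all of $E$.
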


\begin{proof} We proceed by induction on the number of elements of a normalizing sequence of generators of $Q$. Suppose $Q$ is generated by one normal element $x_{1}$ of $A$. Define a map $f: E \To E$ by $f(m) = x_{1}m$. Although this is not a left $A$-module homomorphism, it is $Z(A)$-linear, with $Z(A)$ being the center of $A$, and it maps $A$-submodules of $E$ to $A$-submodules of $E$.  Since $Q$ has the Artin-Rees property by \cite{McConnell-Robson}*{4.2.6}, there exists a natural number $n \geq 1$ such that $Q^{n} E = x_{1}^{n}V = 0$. This induces a finite filtration 
\[0 \subseteq \ker (f) = \Ann_{E}(Q) \subseteq \ker (f^{2}) \subseteq  \ldots \subseteq \ker (f^{n-1}) \subseteq \ker (f^{n}) = E\]
whose subfactors are $A/Q$-modules and $f$ induces a submodule preserving chain of embeddings
\[E /\ker(f)^{n-1} \hookrightarrow \ker(f^{n-1})/\ker(f^{n-2}) \hookrightarrow \ldots \hookrightarrow \ker(f^{2})/\ker(f) \hookrightarrow \ker(f)\]
Hence $E$ is finite dimensional if and only if $\ker(f) = \Ann_{E}(Q)$ is finite dimensional.	

Assume that the assertion holds for all Noetherian algebras and essential extensions $V \leq E$ of simple left $A$-modules such that $\Ann_{A}(V)$ contains a polynormal ideal $Q$ with a normalizing sequence of generators of less than $n$ elements. Let $V \leq E$ be an essential extension of a simple left $A$-module and assume that there exists a polynormal ideal $Q \subseteq \Ann_{A}(V)$ with normalizing sequence of generators $\{x_{1}, \ldots, x_{n}\}$. By hypothesis, the submodule $E' = \Ann_{E}(x_{1})$ is  finite dimensional if and only if $E$ is finite dimensional. Let $A' = A/Ax_{1}$ and $Q' = Q/Ax_{1}$. Then $Q' \subseteq \Ann_{A'}(V)$ is polynormal and generated by the normalizing sequence $\{\bar{x}_{2},\ldots, \bar{x}_{n}\}$, where $\bar{x}_{i}$ denotes the image of the element $x_{i}$ in $A'$. Furthermore, $V \leq E'$ is an essential extension of $A'$-modules such that $Q'V = 0$. By induction hypothesis, $E'$ is finite dimensional if and only if $\Ann_{E'}(Q') = \Ann_{E}(Q)$ is finite dimensional, which proves the lemma.
\end{proof}

\begin{prop}\label{Ideals-with-normalizing-sequence-of-generators}
Let $A$ be a Noetherian $\KK$-algebra, $V$ a finite dimensional irreducible representation of $A$ and $V \leq E$ a finitely generated essential extension of $V$. Then $E$ is finite dimensional if $\Ann_{A}(V)$ contains an ideal $Q$ satisfying one of the following conditions:
\begin{enumerate}
\item $Q$ has the Artin-Rees property, such that $A/Q$ is an affine PI-algebra 
\item  $Q$ is polynormal and $\Loc{A/Q}$ is essentially closed.
\end{enumerate}
\end{prop}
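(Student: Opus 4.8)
The plan is to reduce both statements to the single elementary fact that a \emph{finitely generated locally finite module is finite dimensional} (if $M=\sum_{i=1}^{k}Am_i$ with each $Am_i$ finite dimensional, then $\dim_{\KK}M<\infty$), and to produce the required local finiteness by embedding the relevant module into a locally finite injective hull over a suitable quotient of $A$. Since $E$ is an essential extension of the finite dimensional simple $V$, it has simple socle $V$, and any submodule between $V$ and $E$ is again an essential extension of $V$; this is the structural feature both cases exploit.

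For case (2) I would apply Lemma~\ref{Ideals-with-normal-generators} directly. As $Q\subseteq P=\Ann_A(V)$ is polynormal, that lemma reduces the finite dimensionality of $E$ to that of $\Ann_E(Q)$. Now $\Ann_E(Q)$ is a submodule of the finitely generated module $E$ over the Noetherian ring $A$, hence finitely generated, and it is annihilated by $Q$, so it is an $A/Q$-module. Since $Q\subseteq P$ we have $V\subseteq\Ann_E(Q)$, and $V$ remains essential there; thus $\Ann_E(Q)$ is a finitely generated essential extension of $V$ over $A/Q$. Because $\Loc{A/Q}$ is essentially closed and $V\in\Loc{A/Q}$ is finite dimensional, the injective hull of $V$ in $\Mod{A/Q}$ is locally finite, and $\Ann_E(Q)$ embeds into it; hence $\Ann_E(Q)$ is locally finite, and being finitely generated it is finite dimensional. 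The lemma then forces $E$ to be finite dimensional.

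For case (1) the crucial first move is the Artin-Rees property of $Q$. Applying it to the submodule $V\leq E$ produces an $n$ with $V\cap Q^{n}E\subseteq QV=0$; since $V$ is essential in $E$ this yields $Q^{n}E=0$, so $E$ is a finitely generated module over $\bar A:=A/Q^{n}$. The algebra $\bar A$ is Noetherian, and I would verify that it meets the hypotheses of the second-layer-condition result cited earlier (\cite{BrownCarvalhoMatczuk}*{Theorem 3.3}): it is PI, since its quotient by the nilpotent ideal $Q/Q^{n}$ is the PI algebra $A/Q$ and nilpotent-by-PI is PI; and it is affine, since $A/Q$ is affine and the nilpotent ideal $Q/Q^{n}$ is finitely generated, so that lifting algebra generators of $A/Q$ together with finitely many generators of $Q/Q^{n}$ gives a finite generating set (by a downward induction on powers of $Q/Q^{n}$, using nilpotency). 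An affine Noetherian PI algebra is fully bounded Noetherian, hence satisfies the second layer condition, and all its irreducible representations are finite dimensional by the PI-Nullstellensatz. Thus $\Loc{\bar A}$ is essentially closed, so the injective hull of the simple $\bar A$-module $V$ is locally finite; as $E$ is an essential extension of $V$ over $\bar A$, it embeds into this hull and is therefore locally finite, and being finitely generated it is finite dimensional.

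I expect the main obstacle to lie in case (1): once Artin-Rees has delivered $Q^{n}E=0$, the genuine work is certifying that $\bar A=A/Q^{n}$ is truly affine (the generator-lifting argument exploiting that $Q/Q^{n}$ is nilpotent and finitely generated) and assembling the standard PI inputs — Noetherian PI implies fully bounded Noetherian implies the second layer condition, and affine PI implies finite dimensional simple modules — so that the cited theorem applies to \emph{all} irreducibles of $\bar A$, not just $V$. By contrast, case (2) is essentially pre-packaged by Lemma~\ref{Ideals-with-normal-generators}, the only new observation being the finite-generated-plus-locally-finite principle. A more self-contained route for case (1) would bypass the second-layer machinery by proving directly that $\bar P^{m}E=0$ for $\bar P=P/Q^{n}$ and that $\bar A/\bar P^{m}$ is finite dimensional over $\KK$; however, establishing $\bar P^{m}E=0$ appears to need exactly the same fully-bounded input, so I would favour the reduction above.
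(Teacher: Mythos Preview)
Your proposal is correct, and case~(2) matches the paper's argument exactly: apply Lemma~\ref{Ideals-with-normal-generators} to reduce to $\Ann_E(Q)$, then use that $\Loc{A/Q}$ is essentially closed to force $\Ann_E(Q)$ finite dimensional.

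For case~(1) you take a slightly different route from the paper. Both arguments begin identically: Artin--Rees gives $Q^nE=0$, so $E$ is a finitely generated essential extension of $V$ over $\bar A=A/Q^n$, which is a Noetherian affine PI-algebra (your justification that nilpotent-by-PI is PI and that lifting algebra generators of $A/Q$ together with ideal generators of $Q/Q^n$ yields affineness via downward induction on powers is correct, and in fact more careful than the paper, which simply asserts that $\bar A$ is affine PI). The divergence is in the endgame: the paper invokes Jategaonkar's theorem \cite{Jategaonkar}*{Theorem~2} directly to conclude that the injective hull of $V$ over $\bar A$ has finite length, and then uses that simples over an affine PI-algebra are finite dimensional. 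You instead route through \cite{BrownCarvalhoMatczuk}*{Theorem~3.3} via the second layer condition (FBN $\Rightarrow$ second layer condition, affine PI $\Rightarrow$ finite dimensional simples, hence $\Loc{\bar A}$ essentially closed). Both are valid; the paper's citation of Jategaonkar is marginally more direct since it bypasses the second-layer formalism, while your approach has the virtue of reusing machinery already introduced in the paper and making explicit why $\bar A$ inherits the affine and PI properties.
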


\begin{proof}
(1)  Since $Q$ has the Artin-Rees property, $Q^{n}E = 0$ for some $n \geq 1$ and $E$ is a finitely generated essential extension of $V$ over the affine PI-algebra $\overline{A}:=A/Q^{n}$. By \cite{Jategaonkar}*{Theorem 2}, the injective hull of any irreducible representation over $\overline{A}$ has finite length and so does $E$. Since any irreducible representation over an affine PI-algebra is finite dimensional (see  \cite{McConnell-Robson}*{13.10.3}), $E$ is finite dimensional.

(2)  Since  $\Loc{A/Q}$ is essentially closed and $\Ann_{E}(Q)$ is a (finitely generated) essential extension of $V$,  $Ann_{E}(Q)$ is finite dimensional. By Lemma \ref{Ideals-with-normal-generators}, $E$ is finite dimensional.
\end{proof}

From Proposition \ref{Ideals-with-normalizing-sequence-of-generators} it follows obviously that $\Loc{A}$ is essentially closed for any Noetherian affine PI-algebra, which in particular applies to any Noetherian affine $\KK$-algebra that is finitely generated over a commutative affine  PI-algebra. Moreover, a direct application of the proposition is the following theorem.

\begin{thm}\label{sufficient} The category of locally finite representations of a Noetherian  $\KK$-algebra $A$ is essentially closed if every maximal ideal of finite codimension contains an ideal $Q$ that satisfies one of the following conditions:
\begin{enumerate}
\item  $Q$ has the Artin-Rees property and $A/Q$ is an affine PI-algebra. 
\item  $Q$ is polynormal and  $\Loc{A/Q}$ is essentially closed.
\end{enumerate}
\end{thm}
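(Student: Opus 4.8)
The plan is to deduce the theorem directly from Theorem~\ref{generalTheorem} and Proposition~\ref{Ideals-with-normalizing-sequence-of-generators}, since the text already advertises it as a ``direct application'' of the latter. By the equivalence $(a)\Leftrightarrow(b)$ in Theorem~\ref{generalTheorem}, it suffices to prove that the injective hull $E=E(V)$ of every finite dimensional irreducible representation $V$ of $A$ is locally finite. Because a module is locally finite precisely when each of its finitely generated submodules is finite dimensional, I would reduce the task to showing that every finitely generated submodule $F\leq E$ is finite dimensional.

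First I would observe that any nonzero finitely generated submodule $F\leq E$ contains $V$: since $V$ is essential in $E$, the intersection $F\cap V$ is a nonzero submodule of the irreducible module $V$, and hence equals $V$. Thus $V\leq F$, so $F$ is a finitely generated essential extension of $V$, placing us exactly in the hypothesis of Proposition~\ref{Ideals-with-normalizing-sequence-of-generators}.

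Next I would identify the ideal to which the assumption applies. As recalled before Corollary~\ref{reduction_one_dimensional}, the annihilator $P=\Ann_A(V)$ is a maximal ideal, and since $A/P\cong\End(V_D)$ is finite dimensional over $\KK$, it is of finite codimension. The hypothesis of the theorem therefore supplies an ideal $Q\subseteq P=\Ann_A(V)$ satisfying either condition (1) or condition (2). Applying Proposition~\ref{Ideals-with-normalizing-sequence-of-generators} to the finitely generated essential extension $V\leq F$ then yields that $F$ is finite dimensional. As $F$ was an arbitrary finitely generated submodule of $E$, the injective hull $E$ is locally finite; this establishes condition (b) of Theorem~\ref{generalTheorem}, and the theorem follows.

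I do not expect a serious obstacle, as the argument is essentially a repackaging of the earlier results. The only points needing care are the verification that $\Ann_A(V)$ is genuinely a maximal ideal of finite codimension, so that the hypothesis of the theorem is applicable to it, and the observation that every finitely generated submodule of $E(V)$ already contains $V$ and is thus a finitely generated essential extension to which the Proposition may be applied.
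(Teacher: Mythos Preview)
Your proposal is correct and matches the paper's approach: the authors simply state that the theorem is a direct application of Proposition~\ref{Ideals-with-normalizing-sequence-of-generators}, and your write-up unpacks exactly that application via Theorem~\ref{generalTheorem}(b). The only details you add---that $\Ann_A(V)$ is a maximal ideal of finite codimension and that every nonzero finitely generated submodule of $E(V)$ is a finitely generated essential extension of $V$---are precisely the routine verifications needed to invoke the proposition.
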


Before we apply Theorem \ref{sufficient} to quantum affine spaces, we first turn our attention to finite normalizing ring extensions. Recall that a ring extension $R\subseteq S$ is said to be \emph{finite normalizing}, if there exists a finite set of elements $\{x_1,\ldots, x_n\}$ of $S$ that generates $S$ as $R$-module, such that $Rx_i=x_iR$ for all $i$.
 Let $R \subseteq S$ be any ring extension and $M$ a left $R$-module $M$. Then $\Hom_{R}(S, M)$ can be made into a left $S$-module by the action 
\[(s \cdot f)(x) = f(xs)\]
for $s, x \in S$ and $f \in \Hom_{R}(S, M)$. If moreover, $M$ is a left $S$-module, then we can define the map
\[\varphi: M \To \Hom_{R}(S,M), \quad m \mapsto f_{m}\]
where $f_{m}(s) = sm$. This is actually an $S$-homomorphism since for any $s, x \in S$ and $m \in M$, we have
\[\varphi(sm)(x) = f_{sm}(x) = xsm = f_{m}(xs) = (s\cdot f_{m})(x) = (s\cdot \varphi(m))(x)\]
Moreover, $\varphi$ is a monomorphism because if $f_{m} = 0$ for some $m$, then $f_{m}(1) = m = 0$. 

\begin{lem}\label{finitenormalizing}
Let $R \subseteq S$ be a finite normalizing extension of $\KK$-algebras. If $\Loc{R}$ is essentially closed, then so is $\Loc{S}$.
\end{lem}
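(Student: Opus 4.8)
The plan is to realise essential closure for $S$ through the coinduction functor $\Hom_R(S,-)$ and the criterion of Theorem~\ref{generalTheorem}(b), namely that it suffices to check that the injective hull in $\Mod{S}$ of every finite dimensional irreducible left $S$-module is locally finite. Write $S=\sum_{i=1}^n Rx_i$, so that $S$ is finitely generated as a left $R$-module; since $R$ is Noetherian (the standing hypothesis), $S$ is then Noetherian as well, so Theorem~\ref{generalTheorem} applies to $S$. The technical heart of the proof will be the claim that $\Hom_R(S,M)$ is a locally finite left $S$-module whenever $M$ is a locally finite left $R$-module, using the $S$-action $(s\cdot f)(x)=f(xs)$ and the monomorphism $\varphi$ already supplied in the excerpt.

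For this key claim I would first use that an $R$-linear map $f\colon S\to M$ is determined by the values $f(x_1),\dots,f(x_n)$, which yields an injective $\KK$-linear map $\Theta\colon \Hom_R(S,M)\hookrightarrow M^n$, $f\mapsto(f(x_1),\dots,f(x_n))$. Fix $f$ and, for $s\in S$, write each product $x_is=\sum_j r_{ij}x_j$ with $r_{ij}\in R$; then $(s\cdot f)(x_i)=f(x_is)=\sum_j r_{ij}f(x_j)$ lies in the subspace $N:=\sum_{j=1}^n Rf(x_j)$, which is finite dimensional because $M$ is locally finite over $R$. Hence $\Theta(Sf)\subseteq N^n$ and $\dim_\KK Sf\le n\dim_\KK N<\infty$, so every cyclic $S$-submodule of $\Hom_R(S,M)$ is finite dimensional, i.e. $\Hom_R(S,M)\in\Loc{S}$.

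The rest is formal. Let $V$ be a finite dimensional irreducible left $S$-module, and let $E_R(V)$ be the injective hull in $\Mod{R}$ of $V$ viewed (by restriction) as a finite dimensional, hence locally finite, left $R$-module; since $\Loc{R}$ is essentially closed, $E_R(V)$ is locally finite over $R$. Because restriction $\Mod{S}\to\Mod{R}$ is exact with right adjoint $\Hom_R(S,-)$, the module $\Hom_R(S,E_R(V))$ is injective over $S$, and by the key claim it is also locally finite over $S$. Composing the $S$-monomorphism $\varphi\colon V\hookrightarrow\Hom_R(S,V)$ with the monomorphism $\Hom_R(S,V)\hookrightarrow\Hom_R(S,E_R(V))$ obtained by applying the left exact functor $\Hom_R(S,-)$ to the inclusion $V\hookrightarrow E_R(V)$, I embed $V$ into the injective, locally finite $S$-module $\Hom_R(S,E_R(V))$. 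Consequently the injective hull $E_S(V)$ is isomorphic to a direct summand of $\Hom_R(S,E_R(V))$, hence locally finite, and Theorem~\ref{generalTheorem} gives that $\Loc{S}$ is essentially closed.

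I expect the main obstacle to be the local finiteness of $\Hom_R(S,M)$ over $S$: the point to get right is that the $S$-action on a fixed $f$ keeps all the values $(s\cdot f)(x_i)$ inside the single finitely generated $R$-submodule $N=\sum_j Rf(x_j)$, and that $N$ is finite dimensional precisely because $M$ is locally finite over $R$ and $S$ is module-finite over $R$. By contrast, the injectivity of $\Hom_R(S,E_R(V))$ over $S$ is the standard fact that the right adjoint of an exact functor preserves injectives, and the embedding $\varphi$ is already established in the excerpt, so neither of these requires additional work.
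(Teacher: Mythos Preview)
Your proof is correct and follows the same coinduction strategy as the paper: embed a finite-dimensional $S$-module into the $S$-injective module $\Hom_R(S,E_R(V))$ and verify local finiteness via the evaluation map $f\mapsto(f(x_1),\dots,f(x_n))$. The only notable variation is that you show $\Hom_R(S,M)$ is locally finite as an $S$-module (writing $x_is=\sum_j r_{ij}x_j$), whereas the paper shows it is locally finite as an $R$-module (using the normalizing relation $x_ir=r_ix_i$) and then notes that a finitely generated $S$-submodule is also finitely generated over $R$; your computation is slightly sharper and in fact never invokes the normalizing hypothesis, so it works for any ring extension with $S$ finitely generated as a left $R$-module. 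One small caveat: Noetherianity of $R$ is not an explicit standing hypothesis of the paper, so your appeal to Theorem~\ref{generalTheorem} leans on an unstated assumption---but the paper's own reduction to finite-dimensional $M$ tacitly needs the same thing.
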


\begin{proof}
Let $M$ be a finite dimensional left $S$-module. We need to show that any finitely generated essential extension of $M$ is finite dimensional. 
By assumption, the $R$-injective hull $E$ of $M$ is a locally finite $R$-module and we will show that $\Hom_{R}(S, E)$ is a locally finite  $R$-module. Since $R\subseteq S$ is a finite normalizing extension, there exist elements $x_1, \ldots, x_n\in S$ such that $S=\sum_{i=1}^n Rx_i$ and $Rx_i=x_iR$, for all $i$. Define the $\KK$-linear map $\varphi: \Hom_{R}(S, E) \rightarrow E^n$ given by 
\[f \mapsto \varphi(f) := (f(x_1), \ldots, f(x_n))\]
Note that $\varphi$ is injective, because if $f(x_i)=0$, for all $i$, then as $S=\sum_{i=1}^{n}Rx_i$ also $f(S)=\sum_{i=1}^n Rf(x_i)=0$, i.e. $f=0$. Moreover, for all $f\in \Hom_{R}(S, E)$ and $r \in R$, there exist $r_1, \ldots, r_n \in R$ with $r_ix_i = x_ir$, for all $i$, and 
$$\varphi(r\cdot f) =(f(x_1r), \ldots, f(x_nr)) = (r_1f(x_1), \ldots, r_nf(x_n)) \in Rf(x_1)\oplus \cdots \oplus Rf(x_n).$$
Therefore $\varphi(R\cdot f) \subseteq Rf(x_1)\oplus \cdots \oplus Rf(x_n)$ is finite dimensional, as all  cyclic $R$-submodules $Rf(x_i)$ of $E$ are finite dimensional. Since $\varphi$ is an injective $\KK$-linear map, also $R\cdot f$ is finite dimensional. Thus, $\Hom_{R}(S, E)$ is locally finite as an $R$-module. 

It is a standard fact that $\Hom_{R}(S, E)$ is an injective left $S$-module (see for example \cite{Lam}*{(3.6B) Corollary}). Moreover, $M$ embeds into $\Hom_{R}(S,M)\subseteq \Hom_{R}(S, E)$ as a left $S$-module via $m \mapsto f_{m}$ where $f_{m}(s) = sm$. Hence, $\Hom_{R}(S, E)$ is an injective $S$-module containing $M$. Therefore, if $U$ is a finitely generated essential extension of $M$ as left $S$-module, then it is isomorphic to a submodule of $\Hom_{R}(S, E)$. Since $U$ is also finitely generated as an $R$-module and $\Hom_{R}(S, E)$ is locally finite, $U$ must be finite dimensional. This completes the proof.
\end{proof}

Denote by $ \Xi(A) := \mathrm{Alg}_{\KK}(A,\KK)$ the set of $\KK$-algebra homomorphisms $\alpha: A\rightarrow \KK$. For any $\alpha \in \Xi(A)$, we define $_\alpha \KK$ to be the one-dimensional representation with the action given by $a \cdot 1 := \alpha(a)$ for all $a \in A$. 

\begin{thm}\label{CorGL} Let $\KK$ be algebraically closed. Let $\qas$ be a quantum affine space for some data $\bq=(q_{ij})$. Then locally finite representations of $\qas$ are closed under essential extensions.
\end{thm}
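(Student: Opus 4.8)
The plan is to apply Theorem \ref{sufficient}(1) to $A = \qas$. First I note that $A$, being an iterated Ore extension of $\KK$, is Noetherian, and that each generator $x_i$ is normal in $A$, since the defining relation $x_i x_j = q_{ij} x_j x_i$ gives $A x_i = x_i A$. I would then fix a maximal ideal $M$ of finite codimension; as $\KK$ is algebraically closed, $A/M$ is a finite-dimensional simple algebra, hence $A/M \cong M_k(\KK)$ for some $k \geq 1$. Each image $\bar{x_i}$ is normal in this simple ring, so the two-sided ideal it generates is either zero or all of $A/M$, forcing $\bar{x_i}$ to be $0$ or a unit.

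Writing $S = \{i : \bar{x_i} \text{ is a unit}\}$, the crux of the argument is a determinant computation: for $i, j \in S$ the relation $\bar{x_i}\bar{x_j} = q_{ij}\bar{x_j}\bar{x_i}$ in $M_k(\KK)$ gives, upon taking determinants, $\det(\bar{x_i})\det(\bar{x_j}) = q_{ij}^k\det(\bar{x_j})\det(\bar{x_i})$; cancelling the nonzero determinants yields $q_{ij}^k = 1$. Hence every parameter $q_{ij}$ with $i, j \in S$ is a $k$-th root of unity. I expect this step---realizing that the existence of a finite-dimensional representation on which several generators act invertibly forces the corresponding parameters to be roots of unity---to be the main idea of the proof; the remaining steps are formal and rely only on the normality of the generators. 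Note that this handles irreducibles of arbitrary dimension, so that one does not need the completely-prime hypothesis of Corollary \ref{reduction_one_dimensional} here.

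Finally I would set $Q = \sum_{i \notin S} A x_i \subseteq M$. Being generated by normal elements, $Q$ has the Artin-Rees property by \cite{McConnell-Robson}*{4.2.6}, and the quotient $A/Q \cong \cO_{\bq'}(\KK^{|S|})$ is a quantum affine space whose parameters are all $k$-th roots of unity. Therefore $x_i^{\,k} x_j = q_{ij}^k x_j x_i^{\,k} = x_j x_i^{\,k}$ for $i,j \in S$, so each $x_i^{\,k}$ is central in $A/Q$; consequently $A/Q$ is a finite module over the commutative affine subalgebra $\KK[x_i^{\,k} : i \in S]$ and is an affine PI-algebra. Thus condition (1) of Theorem \ref{sufficient} is satisfied for every maximal ideal $M$ of finite codimension, and $\Loc{\qas}$ is essentially closed.
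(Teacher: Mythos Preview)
Your proof is correct and takes a genuinely different route from the paper's. The paper argues in two stages: first, assuming the group $\Lambda=\langle q_{ij}\rangle$ is torsionfree, it invokes Goodearl--Letzter's theorem (all primes completely prime) to reduce via Corollary~\ref{reduction_one_dimensional} to one-dimensional representations $_\chi\KK$, where the scalar identity $(1-q_{ij})\chi(x_i)\chi(x_j)=0$ forces $q_{ij}=1$ whenever $\chi(x_i),\chi(x_j)\neq 0$; the quotient by the ideal generated by the vanishing $x_i$ is then a \emph{commutative} polynomial ring, and Theorem~\ref{sufficient}(2) applies. Second, for general $\Lambda$, it passes to the subalgebra generated by suitable powers $y_i=x_i^N$ (chosen so the new parameters generate a torsionfree group), and then transfers the result back along the finite normalizing extension $\qasd\subseteq\qas$ via Lemma~\ref{finitenormalizing}.

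Your determinant argument replaces both steps at once: it handles an arbitrary $k$-dimensional irreducible directly, yielding only $q_{ij}^{\,k}=1$ rather than $q_{ij}=1$ for the ``surviving'' indices, so the quotient $A/Q$ is merely a PI quantum affine space rather than a commutative one---but that is exactly what Theorem~\ref{sufficient}(1) needs. This is more economical: it avoids the Goodearl--Letzter completely-prime theorem, the torsionfree/torsion case split, and the finite normalizing extension lemma. The paper's approach, on the other hand, illustrates the use of Lemma~\ref{finitenormalizing} and in the torsionfree case identifies the quotient as genuinely commutative, which is slightly sharper structural information.
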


\begin{proof}
	Assume first, that the group $\Lambda=\langle q_{ij}\rangle \subseteq \KK^\times$ is torsionfree. By Goodearl and Letzter's Theorem \ref{GL}, every prime ideal of $\qas$ is completely prime. By Corollary \ref{reduction_one_dimensional}, $\Loc{\qas}$ is essentially closed if and only if essential extension of the one-dimensional modules $_{\chi}\KK$ over $\qas$ are locally finite, for all $\chi \in \Xi(\qas)$. Let $\chi\in \Xi(\qas)$ and $M=\mathrm{Ker}(\chi)$. The defining relations of the quantum affine space, $x_ix_j=q_{ij}x_jx_i$, yield that \begin{equation}\label{eq_qas} (1-q_{ij})\chi(x_i)\chi(x_j) = 0, \qquad \forall 1\leq i,j \leq n.\end{equation} Let $I=\{ i : \chi(x_i)=0\}$. Then for any $i,j \not\in I$, equation (\ref{eq_qas}) yields $q_{ij}=1$. 
 Let $P=\langle \{ x_i :  i\in I \}\rangle \subseteq M$ be the ideal generated by those $x_i$ with $\chi(x_i)=0$. Then $P$ is generated by normal elements and ${\mathcal{O}_{\mathbf{q}}(\KK^n)}/P \simeq \KK[\{ x_i : i\not\in I\}]$ is a commutative Noetherian domain and has the property that locally finite representations are closed under essential extensions. Hence ${\mathcal{O}_{\mathbf{q}}(\KK^n)}$ satisfies condition (2) of  Theorem \ref{sufficient} and has the property that locally finite representations are closed under essential extensions.
 
Now suppose $\Lambda =\langle q_{ij} : i,j \rangle$ is possibly not torsionfree, then as a finitely generated Abelian group, its torsion submodule $T(\Lambda)$ is finite. Let $N=|T(\Lambda)|$ be the order of its torsion group, then $\lambda\in T(\Lambda)$ if and only if $\lambda^N=1$. In particular $\{ \lambda^N : \lambda \in \Lambda\}$ is a torsionfree subgroup of $\Lambda$. For each $i,j$ set $p_{ij}:=q_{ij}^{N^2}$ and $y_i := x_i^N$. Then $y_iy_j = p_{ij}y_jy_i$, for all $i,j$, and the subring  of $\qas$, generated by $y_1, \ldots, y_n$ can be identified with $\qasd$ for $\bq'=(p_{ij})$. Since  $\Lambda' = \langle p_{ij} : i,j \rangle$ is torsionfree, the first part of this proof applies and shows that $\qasd$ is essentially closed. Moreover, $\qas$ is generated over $\qasd$ by the set of monomials $\{ x_1^{i_1}\cdots x_n^{i_n} \mid 0\leq i_1, \ldots, i_n < N^2 \}$, such that all these monomials commute with the generators $y_i$ of $\qasd$ up to a scalar. Thus,  $\qas$ is a finite normalizing extension of $\qasd$ and Lemma \ref{finitenormalizing} shows that $\qas$ is essentially closed.
\end{proof}

\begin{example}\label{quantumplane}
A particular example is the quantum plane $\mathcal{O}_{\mathbf{q}}(\KK^2)$. In this case, $\Loc{\mathcal{O}_{\mathbf{q}}(\KK^2)}$ is always essentially closed, while by  \cite{CarvalhoMusson},  $\mathcal{O}_{\mathbf{q}}(\KK^2)$ satisfies $(\diamond)$ if and only if $q$ is a root of unity.
\end{example}

\section{Locally finite representations over Ore extensions }\label{Locally finite representations over Ore extensions}

In order to study locally finite representations over Ore extensions, we will first look at conditions to deduce that certain ideals have the Artin-Rees property. A stronger condition is the property that these ideals have a Noetherian Rees ring. We will revise the necessary background first. Let $A$ be a ring and $I$ be an ideal in $A$. The \emph{Rees ring} of $I$ is the subring $\mathcal{R}_A(I)$ of the polynomial ring $A[t]$ generated by $A + It$. In other words, 
\[\mathcal{R}_A(I) = A \oplus  It \oplus I^{2}t^{2} \oplus  \ldots \oplus I^{n}t^{n} \oplus  \ldots   =  A\oplus \bigoplus_{n=1}^\infty I^n t^n \subseteq A[t]\]
and hence a typical element $f$ of $\mathcal{R}_A(I)$ has the form $f = \sum a_{k}t^{k}$ where $a_{k} \in I^{k}$ for each $k\geq 1$.

The reason why we are interested in Rees rings is that an ideal with a Noetherian Rees ring satisfies the Artin-Rees property (see \cite{Goodearl-Warfield}*{Lemma 13.2}).\label{Ideals with Noetherian Rees rings have AR property}
We will say that an ideal $I$ of $A$ has the  \emph{strong Artin-Rees property} if $\mathcal{R}_A(I)$ is Noetherian. 
The following result is basically  \cite{Goodearl92}*{Lemma 5} (see also \cite{Bell}*{Lemma 7.1}). Recall that an ideal $I$ of a ring $R$ with automorphism $\sigma$ and $\sigma$-derivation $\delta$ is called $(\sigma,\delta)$-invariant if $\sigma(I)\subseteq I$ and $\delta(I)\subseteq I$.

\begin{lem}\label{Extensions of Ideals with Noetherian Rees Rings}
Let $R$ be a Noetherian ring, $\sigma$ be an automorphism of $R$, $\delta$ be a $\sigma$-derivation and $A= R[x;\sigma,\delta]$ be the associated Ore extension. If $I$ is an $\sigma$- and $\delta$-invariant ideal of $R$ having the strong Artin-Rees property, then $IA$ is an ideal of $A$ with the strong Artin-Rees property.
\end{lem}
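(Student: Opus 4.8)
The plan is to realize the Rees ring $\mathcal{R}_A(IA)$ as a skew polynomial ring over the Rees ring $\mathcal{R}_R(I)$, which is Noetherian by hypothesis, and then invoke the Hilbert basis theorem for Ore extensions.

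First I would record some preliminary facts. Since $\sigma(I)\subseteq I$ and $R$ is Noetherian, the ascending chain $I\subseteq \sigma^{-1}(I)\subseteq \sigma^{-2}(I)\subseteq\cdots$ stabilises, which forces $\sigma(I)=I$; in particular $\sigma(I^n)=I^n$ for all $n$, and the identity $\delta(ab)=\sigma(a)\delta(b)+\delta(a)b$ together with $\delta(I)\subseteq I$ gives $\delta(I^n)\subseteq I^n$. Next, using $xr=\sigma(r)x+\delta(r)$ and the invariance of $I$, one checks that $IA$ is a two-sided ideal with $AI=IA$; writing $A=\bigoplus_m Rx^m$ as a free left $R$-module one then gets $IA=\bigoplus_m Ix^m$ and, more generally, $(IA)^n=I^nA=\bigoplus_m I^nx^m$. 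Consequently $\mathcal{R}_A(IA)=\bigoplus_n I^nA\,t^n\subseteq A[t]$.

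The core step is to identify this ring. I would extend $\sigma$ and $\delta$ from $R$ to $R[t]$ by letting them fix $t$ (so $\Sigma(\sum r_nt^n)=\sum\sigma(r_n)t^n$ and $\Delta(\sum r_nt^n)=\sum\delta(r_n)t^n$); because $\sigma(I^n)=I^n$ and $\delta(I^n)\subseteq I^n$, these restrict to an automorphism $\Sigma$ and a $\Sigma$-derivation $\Delta$ of $\mathcal{R}_R(I)=\bigoplus_n I^nt^n$. I then claim that the assignment $X\mapsto x$, together with the inclusion $\mathcal{R}_R(I)\hookrightarrow\mathcal{R}_A(IA)$, extends to an isomorphism
\[\mathcal{R}_R(I)[X;\Sigma,\Delta]\;\xrightarrow{\ \sim\ }\;\mathcal{R}_A(IA).\]
To see that the defining relation is respected, take $c=rt^k$ with $r\in I^k$ and compute, using that $t$ is central in $A[t]$,
\[x\cdot(rt^k)=(xr)t^k=\big(\sigma(r)x+\delta(r)\big)t^k=\sigma(r)t^k\,x+\delta(r)t^k=\Sigma(c)\,x+\Delta(c),\]
which is exactly the Ore relation $Xc=\Sigma(c)X+\Delta(c)$. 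Finally both rings are free left $\mathcal{R}_R(I)$-modules on $\{X^m\}$ respectively $\{x^m\}$, and the graded pieces match up as $\bigoplus_{m,n} I^nx^mt^n$, so the comparison map is bijective.

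With the isomorphism in hand the conclusion is immediate: $\mathcal{R}_R(I)$ is Noetherian because $I$ has the strong Artin-Rees property, and $\Sigma$ is an automorphism, so the skew polynomial ring $\mathcal{R}_R(I)[X;\Sigma,\Delta]$ is Noetherian by the Hilbert basis theorem for Ore extensions (see \cite{Goodearl-Warfield}). Hence $\mathcal{R}_A(IA)$ is Noetherian, i.e.\ $IA$ has the strong Artin-Rees property. The main obstacle is purely the bookkeeping of the two gradings (the $t$-adic grading of the Rees construction and the $x$-degree of the Ore extension) needed to check that $\Sigma$ is genuinely an automorphism of $\mathcal{R}_R(I)$ — for which the equality $\sigma(I)=I$, rather than merely $\sigma(I)\subseteq I$, is essential — and to verify bijectivity of the comparison map.
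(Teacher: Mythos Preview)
Your proof is correct and follows essentially the same route as the paper's: both realize $\mathcal{R}_A(IA)$ as the Ore extension $\mathcal{R}_R(I)[x;\sigma,\delta]$ inside $A[t]=R[t][x;\sigma,\delta]$ and conclude via the Hilbert basis theorem. Your write-up is in fact more careful than the paper's on one point: you explicitly argue that $\sigma(I)=I$ (via the Noetherian chain argument) so that $\Sigma$ is a genuine automorphism of $\mathcal{R}_R(I)$, whereas the paper merely asserts that $\sigma$ ``restricts to an automorphism''.
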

\begin{proof}
Let $t$ be an indeterminate and identify $A[t]$ with $R[t][x,\sigma,\delta]$ where we extend $(\sigma,\delta)$ to $R[t]$ by $\sigma(t) = t$ and $\delta(t) = 0$. By hypothesis, all powers $I^n$ are $\sigma$- and $\delta$-invariant ideals, whence the Rees ring  $\mathcal{R}_R(I) = \bigoplus_{j=1}^{\infty} I^{j}t^{j} \subseteq R[t]$ 
is closed under $\sigma$ and $\delta$. In particular, $\sigma$ restricts to an automorphism of $\mathcal{R}_R(I)$. Moreover, $IA = AI$ and so $(IA)^{j} = I^{j}A$ for all $j \geq 0$. It follows that $\mathcal{R}_R(I)[x;\sigma,\delta] = \mathcal{R}_A(IA)$. By assumption, $\mathcal{R}_R(I)$ is Noetherian and, as an Ore extension, so is $\mathcal{R}_A(IA)$.
\end{proof}

Lemma \ref{Extensions of Ideals with Noetherian Rees Rings} and Theorem \ref{sufficient} allow us to conclude that locally finite  representations over unmixed Ore extensions of commutative Noetherian $\KK$-algebras are essentially closed as we will show in the next theorem.

\begin{thm}\label{unmixed_auto}\label{unmixed}
Let $R$ be a commutative Noetherian $\KK$-algebra with a $\KK$-algebra automorphism $\sigma$ and a $\KK$-linear derivation $\delta$.
Then locally finite representations over $R[x;\sigma]$ as well as locally finite representations over $R[x;\delta]$ are essentially closed.
\end{thm}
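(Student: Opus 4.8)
The plan is to verify the hypotheses of Theorem \ref{sufficient} for both $A = R[x;\sigma]$ (with $\delta = 0$) and $A = R[x;\delta]$ (with $\sigma = \mathrm{id}$). Fix a maximal ideal $\m$ of $A$ of finite codimension; since $\KK$ is algebraically closed and $A/\m$ is a finite-dimensional simple algebra, $A/\m \cong M_d(\KK)$ for some $d$. Write $P = \m \cap R$ and note that $P$ is automatically cofinite in $R$, since $R/P$ embeds into $A/\m$; moreover, as an ideal of the commutative Noetherian ring $R$, $P$ has the strong Artin--Rees property (its Rees ring is a finitely generated $R$-algebra, hence Noetherian). The natural candidate ideal is $Q = PA$: once $P$ is known to be $\sigma$- and $\delta$-invariant, Lemma \ref{Extensions of Ideals with Noetherian Rees Rings} shows $PA$ has the strong Artin--Rees property, hence the Artin--Rees property, and it only remains to check that $A/PA$ is affine PI and invoke condition (1) of Theorem \ref{sufficient}.

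The first key step is the invariance of $P$. In the derivation case this is automatic: the relation $xr - rx = \delta(r)$ reduces, for $r \in P$ (so $\bar r = 0$ in $A/\m$), to $\overline{\delta(r)} = [\bar x, \bar r] = 0$, whence $\delta(r) \in \m \cap R = P$; thus $\delta(P) \subseteq P$ (and $\sigma = \mathrm{id}$ is trivially preserved). In the automorphism case the element $x$ is normal in $A$ (indeed $xA = Ax$, as $\sigma$ is bijective), so $\bar x$ generates a two-sided ideal of the simple ring $A/\m$; hence either $x \in \m$ or $\bar x$ is invertible. If $\bar x$ is invertible, then for $r \in P$, so $\bar r = 0$, the relation $\bar x \bar r = \overline{\sigma(r)}\,\bar x$ forces $\overline{\sigma(r)}\,\bar x = 0$ and thus $\overline{\sigma(r)} = 0$, giving $\sigma(P) \subseteq P$; since $R$ is Noetherian this yields $\sigma(P) = P$, and we proceed with $Q = PA$ as above.

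The remaining case is the automorphism case with $x \in \m$, where $P$ need not be $\sigma$-invariant (take $R = \KK[y]$, $\sigma(y) = y+1$). Here I switch to the ideal $Q = Ax = xA$, which lies in $\m$, is polynormal (generated by the single normal element $x$), and satisfies $A/Ax \cong R$. Since $R$ is commutative Noetherian, $\Loc{R}$ is essentially closed (each finite-codimension maximal ideal of $R$ has residue field $\KK$ and the Artin--Rees property, so condition (1) of Theorem \ref{sufficient} applies to $R$); therefore condition (2) of Theorem \ref{sufficient} is met by $Q = Ax$. Recognizing that the failure of $P$ to be invariant occurs \emph{exactly} when $x \in \m$, and that this is precisely the situation rescued by the normal ideal $Ax$, is the main conceptual obstacle.

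The other technical point, common to the derivation case and the invertible-$\bar x$ automorphism case, is verifying that $A/PA$ is affine PI. As an algebra $A/PA \cong (R/P)[x;\bar\sigma]$ or $(R/P)[x;\bar\delta]$, finitely generated over $\KK$ by the finite-dimensional algebra $R/P$ together with $x$, hence affine. For the PI property I will show it is commutative modulo a nilpotent ideal: the nilradical $N$ of the Artinian algebra $R/P$ is $\bar\sigma$- and $\bar\delta$-stable, and $N[x;-]$ is a nilpotent ideal of $A/PA$; the quotient is $((R/P)/N)[x;-]$ with $(R/P)/N \cong \KK^m$. Over $\KK^m$ every derivation vanishes, so in the derivation case the quotient is commutative, while in the automorphism case $\bar\sigma$ permutes the $m$ factors, hence has finite order $\ell$, so $x^\ell$ is central and the quotient is a finite module over its center. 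Either way $A/PA$ is PI, which completes condition (1). I expect the only real bookkeeping to lie in this reduction to the semisimple quotient $\KK^m$.
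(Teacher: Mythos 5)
Your proof is correct, and it shares the paper's skeleton — verify Theorem \ref{sufficient} via the dichotomy on whether $x$ lies in the maximal ideal, using $Q=Ax$ with condition (2) when it does and $Q=(\m\cap R)A$ with the strong Artin--Rees property from Lemma \ref{Extensions of Ideals with Noetherian Rees Rings} when it does not — but you execute the two pivotal technical steps by genuinely different, more self-contained means. First, for the $\sigma$-invariance of $P=\m\cap R$, the paper quotes \cite{McConnell-Robson}*{10.6.4(ii)} ($P\cap R$ is $\sigma$-prime when $x\notin P$), whereas you derive $\sigma(P)=P$ from the elementary observation that the normal element $\bar x$ is invertible in the simple ring $A/\m$, plus Noetherian stabilization of the chain $P\subseteq\sigma^{-1}(P)\subseteq\cdots$. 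Second, for the PI property of $A/PA$, the paper splits into the sub-cases $P=Q$ and $P\neq Q$ and invokes Irving's theorem (the induced automorphism on $R/(P\cap R)$ has finite order) together with Damiano--Shapiro; your nilradical reduction to $\KK^m[x;\tau]$ sidesteps this entirely — which is a real point, since $\bar\sigma$ on $R/P$ need \emph{not} have finite order (e.g.\ $\epsilon\mapsto 2\epsilon$ on $\KK[\epsilon]/(\epsilon^2)$), but the automorphism it induces on the semisimple quotient always does, because it permutes the $\KK$-points. Third, in the derivation case the paper first reduces to one-dimensional modules via Sigurdsson's complete-primeness theorem and Corollary \ref{reduction_one_dimensional}, so that $S/Q\cong\KK[x]$ on the nose; you work with arbitrary finite-codimension maximal ideals and absorb the possible nilpotents of $R/P$ into the same reduction. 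What your route buys is uniformity and independence from the cited structural results (Irving, Damiano--Shapiro, Sigurdsson); what it costs is reliance on three standard facts you leave implicit — that in characteristic zero derivations preserve the nilradical of a commutative Noetherian ring (Seidenberg), that a ring which is a finite module over a central commutative subring is PI, and that a ring which is PI modulo a nilpotent ideal is PI — all of which are true and standard, so none of this constitutes a gap.
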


\begin{proof} 
Let $S=R[x;\sigma]$ and $V$ be a finite dimensional simple left $S$-module, $P=\Ann_S(V)$ be its annihilator and $E$ any finitely generated essential extension of $V$.  If  $x \in P$, then $Q= Sx$ is an ideal contained in $P$ that is generated by the normal element $x$. Since $S/Q \simeq R$ is commutative Noetherian, $\Loc{S/Q}$ is essentially closed and by Lemma \ref{Ideals-with-normalizing-sequence-of-generators}(2), $E$ is finite dimensional.

If $x \not\in P$, then  $P \cap R$ is an $\sigma$-prime ideal of $R$ \cite{McConnell-Robson}*{10.6.4(ii)}. Moreover,  $P\cap R$ has a Noetherian Rees ring, since it is an ideal of the commutative Noetherian ring $R$. By Lemma \ref{Extensions of Ideals with Noetherian Rees Rings}, $Q=(P\cap R)S \subseteq P$ is an ideal contained in $P$ with the Artin-Rees property. If  $P =Q$, then $P$ has the Artin-Rees property and by Lemma \ref{Ideals-with-normalizing-sequence-of-generators}(2), $E$ is finite dimensional. If $P\neq Q$, then by \cite{Irving}*{Theorem 4.3} $\sigma$ induces a finite order automorphism $\overline{\sigma}$ on $R/ (P\cap R)$ and by \cite{Damiano-Shapiro}*{Corollary 10} it follows that $S/Q \cong (R/(P\cap R))[x; \overline{\sigma}]$ is a PI ring. In this case it follows from Theorem~\ref{sufficient} that $E$ is finite dimensional.

Now let $S=R[x;\delta]$. By Sigurdsson's Theorem \cite{Goodearl-Warfield}*{Theorem 10.23}, every prime ideal of $S=R[x;\delta]$ is completely prime. Therefore, by Corollary~\ref{reduction_one_dimensional} it is enough to consider one-dimensional simple modules $V={_{\chi}\KK}$, for some character $\chi \in \Xi(S)$.  Let $P=\mathrm{Ker}(\chi) = \mathrm{Ann}_S(V)$. Then $P\cap R$ is $\delta$-stable, since for any $a \in P\cap R$, we have $\delta(a) = xa - ax  \in P\cap R$. Now $P \cap R$ is a $\delta$-invariant character ideal with the strong Artin-Rees property, hence by Lemma~\ref{Extensions of Ideals with Noetherian Rees Rings} $Q:= (P\cap R)S$ has the strong Artin-Rees property too. Moreover, by \cite{Brown2015}*{Theorem 4.2}, $Q$ is an ideal of $S$ such that $S/Q \cong \mathbb{K}[x]$, i.e. a PI-algebra. It follows from Theorem~\ref{sufficient} that finitely generated essential extensions of $V$ are finite dimensional.
\end{proof}

For the simplest mixed case, a general Ore extension over $\KK[x]$, we can also show that locally finite representations are essential closed.

\begin{cor}
Let $\KK$ be an algebraically closed field of characteristic zero. Then essential extensions of locally finite left $\KK[x][y;\sigma, \delta]$-modules are locally finite. 
\end{cor}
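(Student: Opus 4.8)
The plan is to reduce the mixed Ore extension $A=\KK[x][y;\sigma,\delta]$ to cases already settled in Theorem~\ref{unmixed}, isolating the one genuinely new algebra that can occur. Since every $\KK$-algebra automorphism of $\KK[x]$ is affine, write $\sigma(x)=ax+b$ with $a\in\KK^\times$, $b\in\KK$, and $\delta(x)=p(x)\in\KK[x]$. The key observation is that replacing the Ore generator $y$ by $y'=y+r$ with $r\in\KK[x]$ does not change $A$ as an algebra, but replaces $\delta$ by the $\sigma$-derivation $\delta'$ with $\delta'(s)=\delta(s)-(\sigma(s)-s)r$ for $s\in\KK[x]$; in particular $\delta'(x)=p(x)-((a-1)x+b)\,r$. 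First I would dispose of $a=1$: if also $b=0$ then $\sigma=\id$ and $A=\KK[x][y;\delta]$ is covered by Theorem~\ref{unmixed}, while if $b\neq 0$ then $\sigma(x)-x=b$ is a nonzero scalar, so $r=p/b$ gives $\delta'=0$ and exhibits $A\cong\KK[x][y';\sigma]$, again covered by Theorem~\ref{unmixed}.

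If $a\neq 1$, then $\sigma(x)-x=(a-1)x+b$ has degree one, so dividing $p$ by it leaves a constant remainder $\rho$; taking $r$ to be the quotient gives $\delta'(x)=\rho\in\KK$. When $\rho=0$ we are back in the pure automorphism case and invoke Theorem~\ref{unmixed}. When $\rho\neq 0$, an affine shift $x\mapsto x-b/(1-a)$ to the fixed point of $\sigma$ followed by a rescaling normalizes the defining relation to $yx-axy=1$. Thus, after these harmless re-presentations, we may assume $A=\KK\langle x,y\rangle/(yx-qxy-1)$ is the quantum Weyl algebra with parameter $q:=a\neq 1$, and the whole statement reduces to proving that $\Loc{A}$ is essentially closed for this $A$.

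For the quantum Weyl algebra I would split on whether $q$ is a root of unity. If $q$ is a primitive $\ell$-th root of unity, then an easy induction from $yx^n=q^nx^ny+[n]_q\,x^{n-1}$ (with $[\ell]_q=1+q+\cdots+q^{\ell-1}=0$) shows that $x^\ell$ and $y^\ell$ are central, so $A$ is a finite module over $\KK[x^\ell,y^\ell]$ and hence a Noetherian affine PI-algebra; for such algebras $\Loc{A}$ is essentially closed by the remark following Proposition~\ref{Ideals-with-normalizing-sequence-of-generators}. The remaining, and genuinely new, case is $q$ \emph{not} a root of unity. Here the pivotal device is the element $\Omega:=1-(1-q)xy$: a direct computation gives $\Omega x=qx\Omega$ and $\Omega y=q^{-1}y\Omega$, so $\Omega$ is normal and $(\Omega)$ is polynormal, and setting $\Omega=0$ makes $x$ invertible with inverse $(1-q)y$, giving $A/(\Omega)\cong\KK[x,x^{-1}]$, a commutative affine algebra whose $\Loc{}$ is therefore essentially closed. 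By Theorem~\ref{sufficient}(2) it then suffices to show $(\Omega)\subseteq\Ann_A(V)$ for every finite-dimensional simple $A$-module $V$.

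I expect this last verification to be the main obstacle, and the plan for it is as follows. Since $\Omega$ is normal and $V$ is simple, $\Omega$ acts on $V$ either as $0$ (in which case we are done) or invertibly. In the invertible case, conjugation by $\Omega$ makes the operator $\bar x$ similar to $q\bar x$, so its finite spectrum is stable under multiplication by $q$; as $q$ is not a root of unity this forces $\mathrm{spec}(\bar x)=\{0\}$, i.e.\ $\bar x$ is nilpotent. Choosing $0\neq v_0\in\ker\bar x$ and putting $v_n=\bar y^{\,n}v_0$, the defining relation yields $\bar x v_n=c_n v_{n-1}$ with $c_n=(1-q^{-n})/(1-q)$, which is nonzero for every $n\geq 1$ precisely because $q^n\neq 1$. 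A short linear-independence argument then shows the nonzero $v_n$ are independent, so finite-dimensionality forces $v_N=0$ for some minimal $N$; but then $0=\bar x v_N=c_N v_{N-1}$ with $v_{N-1}\neq 0$ contradicts $c_N\neq 0$. Hence $\Omega$ cannot act invertibly, so $\Omega V=0$, and the reduction via Theorem~\ref{sufficient}(2) completes the proof.
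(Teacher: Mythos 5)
Your proposal is correct, and its skeleton matches the paper's: reduce $\KK[x][y;\sigma,\delta]$ to the standard types and dispose of the commutative, pure-derivation and pure-automorphism cases by Theorem~\ref{unmixed}, then handle the quantized Weyl algebra $A_1(\KK,q)$ via a normal element together with Theorem~\ref{sufficient}, using the PI/central-subring argument when $q$ is a root of unity. (Note your $\Omega=1-(1-q)xy$ is literally the paper's element $u=yx-xy$, and your $A/(\Omega)\cong\KK[x^{\pm 1}]$ is its Proposition~8.2 citation.) Where you genuinely differ is in making the two external inputs self-contained. First, the paper obtains the classification of Ore extensions of $\KK[x]$ by citing Alev--Dumas \cite{AlevDumas}, whereas you re-derive it by the explicit substitutions $y\mapsto y+r$ and the shift of $x$ to the fixed point of $\sigma$; your case analysis is complete and your formula $\delta'(s)=\delta(s)-(\sigma(s)-s)r$ is correct, so this is a legitimate hands-on replacement (your grouping is coarser — the quantum plane is absorbed into the automorphism case of Theorem~\ref{unmixed} rather than treated via Example~\ref{quantumplane} — which is harmless). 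Second, and more substantively, for $q$ not a root of unity the paper invokes Goodearl's theorem \cite{Goodearl92} that \emph{every} nonzero prime ideal of $A_1(\KK,q)$ contains $u$, while you prove only the weaker statement that Theorem~\ref{sufficient} actually needs: $u$ annihilates every finite-dimensional simple module. Your argument for this is sound — normality makes $\ker\bar\Omega$ a submodule, so $\bar\Omega$ is zero or invertible; invertibility gives the similarity $\bar\Omega\bar x\bar\Omega^{-1}=q\bar x$, forcing $\mathrm{spec}(\bar x)=\{0\}$ since $q$ has infinite order (this uses the standing hypothesis that $\KK$ is algebraically closed); and the raising-operator computation $\bar x\,\bar y^{\,n}v_0=c_n\bar y^{\,n-1}v_0$ with $c_n=(1-q^{-n})/(1-q)\neq 0$ contradicts finite-dimensionality. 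The paper's route is shorter and leans on the literature; yours is longer but elementary and self-contained, and it isolates exactly the input Theorem~\ref{sufficient} requires, namely that the finite-codimensional maximal ideals, rather than all nonzero primes, contain the normal element.
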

\begin{proof}
The automorphisms of $\KK[x]$ have the form $\sigma(x) = qx + b$ where $q$ and $b$ are some scalars. 
By \cite{AlevDumas}, Ore extensions $A = \KK[x][y;\sigma, \delta]$ of the ring $\KK[x]$ are isomorphic to either $\KK[x,y]$, the quantum plane $\cO_\bq(\KK^2)$, the quantized Weyl algebra $A_{1}(\KK,q)$, or a differential operator ring $\KK[x][y;\delta]$. 
We have seen already, that locally finite representations are essentially closed for $\KK[x,y]$ (Theorem~\ref{sufficient}), $\cO_\bq(\KK^2)$ (Example~\ref{quantumplane}) and  $\KK[x][y;\delta]$ (Theorem~\ref{unmixed}).

Let $A = A_{1}(k,q) = k[x][y;\sigma, \delta]$ be the quantized Weyl algebra, $q \neq 1$ and $q$ is not a root of unity. Then the element $u = yx - xy$ is a normal element in $A$, every nonzero prime ideal of $A$ contains $u$ by \cite{Goodearl92}*{Theorem 8.4(a)} and $A/uA \cong \KK[x,x^{-1}]$ (see \cite{Goodearl92}*{Proposition 8.2}). Since $A/uA$ is commutative Noetherian, $\Loc{A/uA}$ is essentially closed and again by Theorem~\ref{sufficient}, $A$ has the same property. When $q$ is a root of unity, $A$ becomes a finitely generated module over a central subring, hence a PI-algebra and Theorem~\ref{sufficient} applies again.
\end{proof}

For an arbitrary commutative coefficient ring $R$, we do not have a definite answer when the Ore extension $S = R[x;\sigma, \delta]$ is essentially closed. However, in the case where the extension $R \subseteq S$ is a Hopf-Ore extension, we can get a positive answer. Recall from \cite{Brown2015}*{Definition 2.1} that an Ore extension $S = R[x; \sigma, \delta]$ of a Hopf algebra $R$ over $\KK$ is a Hopf-Ore extension of $R$ if $S$ is a Hopf algebra over $\KK$ with Hopf subalgebra $R$ such that there are $a, b \in R$ and $v, w \in R \otimes R$ such that $\Delta(x) = a \otimes x + x \otimes b + v(x \otimes x) + w$.

\begin{thm}
Let $R \subseteq A= R[x;\sigma,\delta]$ be a Hopf-Ore extension where $R$ is a commutative Noetherian Hopf algebra domain. Then $A$ is essentially closed.
\end{thm}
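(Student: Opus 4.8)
The plan is to apply Theorem~\ref{Reduction to the trivial module}, which collapses the problem to a single representation, and then to verify the resulting condition by means of Theorem~\ref{sufficient}. First I would record that $A=R[x;\sigma,\delta]$ is a Noetherian domain (an Ore extension of the Noetherian domain $R$ by the automorphism $\sigma$) and that it is a Hopf algebra with \emph{bijective} antipode. Writing the comultiplication in the normalized Hopf--Ore form $\Delta(x)=a\otimes x+x\otimes 1$ with $a$ a grouplike element of $R$ (so $a$ is invertible and $\epsilon(a)=1$; the reduction to this form over a commutative domain is part of the structure theory in \cite{Brown2015}), one gets $\epsilon(x)=0$ and, from $m\circ(S\otimes\id)\circ\Delta(x)=\epsilon(x)1=0$, that $S(x)=-a^{-1}x$. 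Since $S$ is bijective on the commutative Hopf algebra $R$ and sends $x$ to a unit multiple of $x$, it preserves the filtration by $x$-degree and is therefore bijective on $A$. With the antipode bijective, Theorem~\ref{Reduction to the trivial module} tells us that $\Loc{A}$ is essentially closed as soon as the injective hull of the trivial representation ${}_\epsilon\KK$ is locally finite.

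By Theorem~\ref{generalTheorem}(b) together with Proposition~\ref{Ideals-with-normalizing-sequence-of-generators}, it then suffices to exhibit, inside the augmentation ideal $A^{+}=\Ker(\epsilon)=\Ann_A({}_\epsilon\KK)$, an ideal $Q$ meeting condition~(1) of Theorem~\ref{sufficient}: $Q$ has the Artin--Rees property and $A/Q$ is an affine PI-algebra. Here I use that $R$, being a Noetherian commutative Hopf algebra over a field, is affine and hence PI. The next step is to extract two facts from the Hopf--Ore compatibility. Applying the algebra map $\epsilon$ to the relation $xr=\sigma(r)x+\delta(r)$ and using $\epsilon(x)=0$ gives $\epsilon(\delta(r))=0$, i.e.\ $\delta(R)\subseteq R^{+}$ where $R^{+}=\Ker(\epsilon|_R)$; and a short computation with $\Delta(xr)=\Delta(x)\Delta(r)$ yields $\Delta\sigma=(\sigma\otimes\id)\Delta$, so that $\sigma=\tau^{\ell}_{\gamma}$ is the left winding automorphism $\sigma(r)=\sum\gamma(r_1)r_2$ attached to the character $\gamma=\epsilon\circ\sigma$.

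Now I would split according to whether $\sigma$ is trivial. If $\gamma=\epsilon$, so $\sigma=\id$ and we are in the derivation case, then $R^{+}$ is $\sigma$- and $\delta$-invariant ($\delta$-stable because $\delta(R)\subseteq R^{+}$) and has a Noetherian Rees ring, being an ideal of the commutative Noetherian ring $R$; hence $Q:=R^{+}A$ has the strong Artin--Rees property by Lemma~\ref{Extensions of Ideals with Noetherian Rees Rings}, while $A/Q\cong(R/R^{+})[x;\bar\delta]\cong\KK[x]$ by \cite{Brown2015}*{Theorem 4.2}, which is affine PI. If $\gamma\neq\epsilon$, so $\sigma\neq\id$, then $R^{+}$ is no longer $\sigma$-stable (its $\sigma$-orbit can be Zariski dense, in which case the two-sided ideal it generates is all of $A$), and one cannot use $R^{+}A$. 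Instead I would show that the $\sigma$-derivation $\delta$ occurring here is \emph{inner}, of the form $r\mapsto\rho\,(r-\sigma(r))$ for some $\rho\in R$, so that after the change of variable $x\mapsto x-\rho$ we may assume $\delta=0$. Then $x$ is normal ($xr=\sigma(r)x$ and $\sigma(R)=R$ force $xA=Ax=\bigoplus_{n\geq1}Rx^{n}$), so $Q:=Ax\subseteq A^{+}$ is generated by a normal element, has the Artin--Rees property by \cite{McConnell-Robson}*{4.2.6}, and $A/Q\cong R$ is commutative affine, hence PI. In both cases Proposition~\ref{Ideals-with-normalizing-sequence-of-generators}(1) shows every finitely generated essential extension of ${}_\epsilon\KK$ is finite dimensional, so $E({}_\epsilon\KK)$ is locally finite, and Theorem~\ref{Reduction to the trivial module} finishes the proof.

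The main obstacle is the case $\sigma\neq\id$: justifying that the Hopf--Ore $\sigma$-derivation $\delta$ is inner, i.e.\ lies in $R\cdot(\id-\sigma)$. This is precisely where the Hopf structure has to be used, since the bare Ore-extension hypotheses do not force it, and it rests on the twisted coderivation identity $\Delta\delta(r)=\sum\delta(r_1)\otimes r_2+\sum a r_1\otimes\delta(r_2)$ obtained from $\Delta(xr)=\Delta(x)\Delta(r)$, combined with the fact that $\sigma=\tau^{\ell}_{\gamma}$ with $\gamma\neq\epsilon$ makes $\id-\sigma$ invertible on the pieces of $R$ that matter. I expect verifying this normalization over an arbitrary commutative Noetherian Hopf domain---rather than in the affine examples where one can argue by hand---to be the technical heart of the argument; once $Q$ is in hand, the Artin--Rees and PI bookkeeping is routine given Lemma~\ref{Extensions of Ideals with Noetherian Rees Rings} and Theorem~\ref{sufficient}.
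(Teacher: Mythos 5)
Your overall strategy is sound and, in outline, close to the paper's: reduce the mixed Hopf--Ore extension to one of the two unmixed types (differential or automorphism) and then invoke the Artin--Rees/PI machinery of Theorem~\ref{sufficient}. But your write-up has a genuine gap exactly where you flag it: the claim that for $\sigma\neq\id$ the $\sigma$-derivation $\delta$ is inner, i.e.\ $\delta(r)=\rho\,(r-\sigma(r))$ for some $\rho\in R$, is asserted and motivated but never proved. Everything in your second case hinges on it --- without it you cannot change variables to kill $\delta$, $x$ need not be normal, and $Q=Ax$ need not even be a two-sided ideal --- and your concluding paragraph concedes that you ``expect'' the verification to work rather than carrying it out. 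A blind proof cannot defer its technical heart. This dichotomy (after an affine change of variables, a Hopf--Ore extension of a commutative Hopf domain is of differential type or of automorphism type) is precisely the content of \cite{Brown2015}*{Theorems 5.1 \& 5.2}, which is what the paper cites; the paper's entire proof is: $R$ is affine by Molnar's theorem \cite{Molnar}, the extension is of differential or automorphism type by \cite{Brown2015}*{Theorems 5.1 \& 5.2}, and then Theorem~\ref{unmixed_auto} applies. So your argument becomes correct the moment you replace your unproved innerness claim by that citation (note you already cite \cite{Brown2015}, but only for the normalized form of $\Delta(x)$, which is a different and weaker statement).

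Two smaller points. First, your detour through Theorem~\ref{Reduction to the trivial module} (bijectivity of the antipode, winding automorphisms, reduction to ${}_\epsilon\KK$) is unnecessary: once the extension is known to be $R[x;\sigma]$ or $R[x;\delta]$ with $R$ commutative Noetherian, Theorem~\ref{unmixed_auto} already handles essential extensions of \emph{all} finite dimensional simple modules, not just the trivial one, so the Hopf-theoretic reduction buys you nothing here. Second, in your automorphism-type case you can avoid relying on affineness of $R$ (Molnar) altogether by using condition (2) of Theorem~\ref{sufficient} instead of condition (1): $Q=Ax$ is generated by a normal element, hence polynormal, and $\Loc{A/Q}=\Loc{R}$ is essentially closed because $R$ is commutative Noetherian.
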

\begin{proof}
By Molnar's Theorem \cite{Molnar}, $R$ is an affine $\KK$-algebra. According to \cite{Brown2015}*{Theorems 5.1 \& 5.2}, there is always a change of variables so that the Ore extension is either a differential type or an automorphism type. Since the coefficient ring is commutative, it follows from Theorem~\ref{unmixed_auto}  that $A$ is essentially closed.
\end{proof}

\section{Locally finite representations over some Hopf algebras}
%
In this  last section, we restrict ourselves mostly to Hopf algebras $H$. We have seen in Section~\ref{when locally finite reps are essentially closed} that it is sometimes possible to reduce the study of locally finite representations to the study of one-dimensional representations. In particular we will prove conditions under which $\Loc{H}$ is essentially closed provided the injective hull of the trivial representation is locally finite. This will be always the case when the antipode $S$ of $H$ is invertible. This is the case for Noetherian semiprime Hopf algebras as shown by Skryabin in \cite{Skryabin}*{Corollary 1}.

\subsection{Reduction to the trivial representation}
The category of left $H$-modules is a tensor category. In particular, for any left $H$-modules $U$ and $V$, the tensor product $U \otimes V$ is a left $H$-module with the left $H$-action defined as $h \cdot (u \otimes v) = \sum (h_{1}\cdot u)\otimes (h_{2}\cdot v)$, for all $h \in H, u \in U$ and $v \in V$. Also, $\Hom_{\KK}(U ,V)$ is a left $H$-module with left $H$-action defined as $(h \cdot f)(u) = \sum h_{1}\cdot f(S(h_{2}) \cdot u),$ for all $h \in H, u \in U$ and $f \in \Hom_{\KK}(U, V)$. In case the antipode $S$ is invertible, we define another left $H$-module structure on the dual space $V^*$ of $V$  by $(h \cdot \varphi)(v) = \varphi(S^{-1}(h)\cdot v),$ for all $h \in H, v \in V$ and $\varphi \in V^{*}$

\begin{lem}\label{isoHopf}
If $H$ is a Hopf algebra with invertible antiopode, $U$ and $E$ are left $H$-modules and $V$ is a finite dimensional representation, then
$\Hom_H(U,E\otimes V) \simeq \Hom_H(U\otimes V^*, E)$ as left $H$-modules.
\end{lem}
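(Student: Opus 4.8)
The plan is to read the claimed isomorphism as the tensor--hom (duality) adjunction in the monoidal category $\Mod{H}$. Since $V$ is finite dimensional and the antipode $S$ is invertible, the dual space $V^*$, equipped with the $H$-action $(h\cdot\varphi)(v)=\varphi(S^{-1}(h)\cdot v)$ introduced just before the lemma, will serve as a \emph{right} dual of $V$. Concretely, I will produce the evaluation and coevaluation maps
\[ \mathrm{ev}\colon V\otimes V^*\To \KK,\quad v\otimes\varphi\mapsto \varphi(v),\qquad \mathrm{coev}\colon \KK\To V^*\otimes V,\quad 1\mapsto \sum_i \varphi_i\otimes e_i, \]
where $(e_i)$ is a basis of $V$ and $(\varphi_i)$ the dual basis, and use them to write down a pair of mutually inverse maps between the two Hom-spaces.

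The first, and genuinely load-bearing, step is to check that $\mathrm{ev}$ and $\mathrm{coev}$ are morphisms of $H$-modules, where $\KK$ carries the trivial ($\epsilon$-)action. For $\mathrm{ev}$ one applies it to the diagonal action and computes $\sum (h_2\cdot\varphi)(h_1\cdot v)=\sum \varphi\big(S^{-1}(h_2)h_1\cdot v\big)=\epsilon(h)\varphi(v)$; here the antipode identity $\sum S^{-1}(h_2)h_1=\epsilon(h)$ is exactly the one available because $S$ is invertible, and with the $S$-twisted action one would instead obtain the \emph{left} dual and the tensor factors on the wrong side, so this is precisely where the hypothesis is used. The $H$-linearity of $\mathrm{coev}$ (invariance of the canonical element $\sum_i\varphi_i\otimes e_i$) is the dual computation. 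Granting these, I define
\[ \Phi\colon \Hom_H(U,E\otimes V)\To \Hom_H(U\otimes V^*,E),\quad \Phi(f)=(\id_E\otimes\,\mathrm{ev})\circ(f\otimes\id_{V^*}), \]
\[ \Psi\colon \Hom_H(U\otimes V^*,E)\To \Hom_H(U,E\otimes V),\quad \Psi(g)=(g\otimes\id_V)\circ(\id_U\otimes\,\mathrm{coev}). \]
Because $f,g,\mathrm{ev},\mathrm{coev}$ are all $H$-linear, the composites $\Phi(f)$ and $\Psi(g)$ are again $H$-linear, so the maps are well defined.

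Next I would verify that $\Phi$ and $\Psi$ are mutually inverse, which is purely formal: inserting the definitions and using naturality to slide $f$ (resp. $g$) to the outside, the remaining composite $(\mathrm{ev}\otimes\id_V)\circ(\id_V\otimes\,\mathrm{coev})=\id_V$ (resp. the companion triangle identity $(\id_{V^*}\otimes\,\mathrm{ev})\circ(\mathrm{coev}\otimes\id_{V^*})=\id_{V^*}$) collapses everything to $\Psi\circ\Phi=\id$ and $\Phi\circ\Psi=\id$. Both triangle identities are immediate from the explicit bases. Finally, to justify the phrase ``as left $H$-modules'', I would note that $\Phi$ and $\Psi$ are the restrictions to $H$-invariants of the same two formulas regarded as maps between the internal Hom-modules $\Hom_\KK(U,E\otimes V)$ and $\Hom_\KK(U\otimes V^*,E)$, each carrying the adjoint action $(h\cdot f)(u)=\sum h_1\cdot f(S(h_2)\cdot u)$; those formulas give an isomorphism of internal Homs, and $\Hom_H(-,-)$ is exactly the subspace of $H$-invariants, on which $H$ acts through $\epsilon$.

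I expect the only real obstacle to be bookkeeping of conventions: fixing the variance of $V^*$ so that it is the right (not left) dual of $V$, which is what forces the use of $S^{-1}$ and hence the invertibility hypothesis, and keeping the tensor factors on the correct side throughout, since $\Mod{H}$ is monoidal but not symmetric. Once the $H$-linearity of $\mathrm{ev}$ and $\mathrm{coev}$ is secured, the remainder is the standard rigidity argument.
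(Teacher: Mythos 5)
Your proof is correct, and it reaches the lemma by a genuinely different (more categorical) route than the paper, even though the underlying maps coincide: your $\Phi(f)=(\id_E\otimes\mathrm{ev})\circ(f\otimes\id_{V^*})$ is exactly the paper's $\Phi(f)(u\otimes\varphi)=(\id_E\otimes\varphi)(f(u))$, and your $\Psi$ is the paper's $\Phi^{-1}$ written via the dual basis. The difference lies in how $H$-linearity is certified. The paper works at the level of internal Homs: it shows by a direct Sweedler-notation computation (the key identity being $\varphi\lambda^V_h=S(h)\cdot\varphi$) that $\Phi$ intertwines the adjoint actions on $\Hom_{\KK}(U,E\otimes V)$ and $\Hom_{\KK}(U\otimes V^*,E)$, and only then passes to $H$-invariants. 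You instead localize all Hopf-algebra computation into the two checks that $\mathrm{ev}$ and $\mathrm{coev}$ are module maps — which is correct, and is precisely where the invertibility of $S$ enters through $\sum S^{-1}(h_2)h_1=\epsilon(h)1=\sum h_2S^{-1}(h_1)$ — after which everything is formal: composites of $H$-linear maps are $H$-linear, and the triangle identities give mutual inverses. Your organization buys manifest naturality in $U$, which is what the application in Theorem~\ref{Reduction to the trivial module} actually uses (identifying $\Hom_H(-,E\otimes V)$ with the composite of the exact functors $-\otimes_{\KK}V^*$ and $\Hom_H(-,E)$), while the paper's computation proves the slightly stronger fact that the internal Homs themselves are isomorphic as $H$-modules under the adjoint actions. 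One caveat about your closing paragraph: your argument never verifies that $\Phi$ is equivariant for the adjoint actions on the full spaces $\Hom_{\KK}(-,-)$ — that equivariance is exactly the paper's main computation — so you should not assert an ``isomorphism of internal Homs'' as $H$-modules. This is harmless for the lemma as stated, since $\Hom_H(-,-)$ is the subspace of $H$-invariants, on which the action is trivial ($h\cdot f=\epsilon(h)f$), and therefore your linear bijection between the two invariant subspaces is automatically an isomorphism of (trivial) left $H$-modules; but the claim as phrased is stronger than what you have established.
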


\begin{proof}
Define the map $\Phi: \Hom_{\KK}(U, E \otimes V) \To \Hom_{\KK}(U \otimes V^{*}, E)$ by the rule 
\[\Phi(f)(u \otimes \varphi) = (\mathrm{id}_E \otimes \varphi)(f(u)),\]
for all $f \in \Hom_{\KK}(U, E \otimes V), \varphi \in V^{*}$ and $u \in U$. It is clear that $\Phi$ is an isomorphism of $\KK$-vector spaces with inverse given by  $\Phi^{-1}: \Hom_{\KK}(U\otimes V^*, E)\rightarrow \Hom_{\KK}(U,E\otimes V)$ and 
\[\Phi^{-1}(f)(u)=\sum_{i=1}^n f(u\otimes p_i)\otimes b_i,\]
where  $\{b_1,\ldots, b_n\}$ is a basis of $V$, $\{p_1,\ldots, p_n\} \subseteq V^*$  a    dual basis satisfying $p_i(b_j)=\delta_{ij}$, for all $i, j$. 
%

We will prove that $\Phi$ is $H$-linear. For all $h \in H$, we denote the action of $h$ on $U$ by $\lambda_{h}^{U}$, i.e.  $\lambda_{h}^{U}(u) = h\cdot u$. Note that for any $\varphi \in V^{*}, h \in H$ and $e \otimes v \in E \otimes V$ we have 
\begin{equation}\label{eq1}
(\mathrm{id}_E\otimes \varphi)(h\cdot (e\otimes v)) = \sum h_1\cdot e  \varphi(h_2\cdot v)
= \sum h_1 \cdot \left( \mathrm{id}_E \otimes \varphi \lambda^V_{h_2} \right) (e\otimes v).
\end{equation}
%
%
%
Note also that $\left(\varphi\lambda_h^V\right)(v)=\varphi(h\cdot v) = (S(h)\cdot \varphi)(v)$, for all $v\in V$ and $h\in H$. Hence,
\begin{equation}\label{eq3}
 \varphi\lambda_{h}^{V} = S(h)\cdot \varphi.
\end{equation}
Therefore:
\begin{eqnarray*}
\Phi(h\cdot f)(u\otimes \varphi) &=& \sum (\mathrm{id}_E\otimes \varphi)(h_1\cdot f(S(h_2)\cdot u))\\
&=& \sum h_1 \cdot (\mathrm{id}_E \otimes \varphi\lambda_{h_2}^V)(f(S(h_3)\cdot u))\qquad \mbox{ by (\ref{eq1})}\\
&=& \sum h_1 \cdot (\mathrm{id}_E \otimes S(h_2)\cdot \varphi)(f(S(h_3)\cdot u))\qquad \mbox{ by (\ref{eq3})}\\
&=& \sum h_1 \cdot \Phi(f)( (S(h_3)\cdot u) \otimes (S(h_2)\cdot \varphi) )\\
&=& \sum h_1 \cdot \Phi(f)( S(h_2)\cdot (u\otimes \varphi))\\
&=& (h\cdot \Phi(f))(u\otimes \varphi).
\end{eqnarray*}
This shows  $\Phi(h\cdot f) = h\cdot \Phi(f)$. Thus  $\Phi$ is left $H$-linear and hence an isomorphism of left $H$-modules. The \emph{$H$-invariants} of a left $H$-module $V$ are defined as  $V^H = \{v\in V: h\cdot v = \epsilon(h)v, \forall h\in H\}$. Given two left $H$-modules $U$ and $W$, it is not difficult to see, that $\Hom_{\KK}(U,W)^H = \Hom_H(U,W)$ consists of the $H$-linear functions from $U$ to $W$.
Since $H$-invariant elements are preserved under $H$-linear isomorphisms, we conclude that 
\[\Hom_{H}(U, E \otimes V) = \Hom_{\KK}(U, E \otimes V)^H \underset{\Phi}{\simeq} \Hom_{\KK}(U \otimes V^{*}, E)^H = \Hom_{H}(U \otimes V^{*}, E).\]
\end{proof}

\begin{thm}\label{Reduction to the trivial module}
Let $H$ be a Hopf algebra. If $H$ has a bijective antipode  or if all finite dimensional representations of $H$ are $1$-dimensional, then $\Loc{H}$ is essentially closed if and only if the injective hull of $E(\KK)$ of the trivial representation is locally finite.
\end{thm}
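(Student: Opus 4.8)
The plan is to prove the theorem by reducing the injective hull of an arbitrary finite dimensional irreducible representation to that of the trivial representation, using Theorem~\ref{generalTheorem}(b) as the characterization of essential closure. One direction is immediate: if $\Loc{H}$ is essentially closed then the injective hull of every finite dimensional representation, in particular $\KK$, is locally finite. For the converse, I would assume $E(\KK)$ is locally finite and aim to show that the injective hull of an arbitrary finite dimensional irreducible representation $V$ is locally finite as well, which by Theorem~\ref{generalTheorem} yields essential closure.

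The key device is Lemma~\ref{isoHopf}. I would first handle the case of a bijective antipode. Let $V$ be a finite dimensional irreducible $H$-module and let $E = E(\KK)$ be the injective hull of the trivial module, which is locally finite by hypothesis. The natural strategy is to consider the $H$-module $E \otimes V$. Since $E$ is injective and $V$ is finite dimensional, one expects $E \otimes V$ to again be injective: for any short exact sequence, applying $\Hom_H(-, E \otimes V)$ and using the isomorphism $\Hom_H(U, E \otimes V) \simeq \Hom_H(U \otimes V^*, E)$ from Lemma~\ref{isoHopf}, exactness follows from the injectivity of $E$ together with the exactness of $-\otimes V^*$ (which is exact because $V^*$ is finite dimensional, hence $\KK$-flat). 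Thus $E \otimes V$ is an injective $H$-module. Moreover $E \otimes V$ is locally finite, since $E$ is locally finite and $V$ is finite dimensional: for $x = \sum e_i \otimes v_i$, the submodule $Hx$ sits inside $(\sum_i He_i)\otimes V$, which is finite dimensional. Finally, since $V$ is a nonzero submodule-quotient arrangement of the tensor structure, I would locate a copy of $V$ (equivalently, show $\KK \hookrightarrow E$ tensored with $V$ produces $V \hookrightarrow E \otimes V$) so that $E(V)$ embeds as a direct summand of the locally finite injective module $E \otimes V$, forcing $E(V)$ to be locally finite.

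For the second case, where all finite dimensional representations of $H$ are one-dimensional, the argument is more elementary: every finite dimensional irreducible representation is just $_\chi\KK$ for some character $\chi$, and I would twist $E(\KK)$ by $\chi$ to obtain the injective hull of $_\chi\KK$. Concretely, tensoring with a one-dimensional module is an exact equivalence preserving injectivity and local finiteness, so $E(\KK) \otimes {_\chi\KK}$ is a locally finite injective hull of $_\chi\KK$. Either case then gives that the injective hull of every finite dimensional irreducible representation is locally finite, and Theorem~\ref{generalTheorem}(b)$\Rightarrow$(a) completes the proof.

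The main obstacle I anticipate is verifying carefully that $E \otimes V$ is genuinely the injective hull (or at least contains the injective hull as a direct summand) of a chosen copy of $V$, rather than merely being some injective locally finite module. The clean way to sidestep this is to argue at the level of injectivity alone: once $E \otimes V$ is shown to be injective and locally finite, any finite dimensional submodule isomorphic to $V$ has its injective hull realized as a direct summand of $E \otimes V$ (injective hulls split off injectives), and direct summands of locally finite modules are locally finite. So the technical crux reduces to the injectivity of $E \otimes V$, which is exactly what Lemma~\ref{isoHopf} is designed to deliver, and to exhibiting the embedding $V \hookrightarrow E \otimes V$ coming from the counit-induced inclusion $\KK \hookrightarrow E$.
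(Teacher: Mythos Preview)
Your proposal is correct and follows essentially the same approach as the paper. For the bijective-antipode case you argue exactly as the paper does: use Lemma~\ref{isoHopf} to show $E(\KK)\otimes V$ is injective, observe it is locally finite, and embed $E(V)$ into it via $V\simeq \KK\otimes V\hookrightarrow E(\KK)\otimes V$. For the second case the paper phrases the twist via the right winding automorphism $\tau^r_\chi$ rather than via tensoring with ${}_\chi\KK$, but since $M\otimes{}_\chi\KK\cong{}_{\tau^r_\chi}M$ as left $H$-modules these are the same construction; your observation that tensoring with a one-dimensional module is an auto-equivalence (with inverse tensoring by ${}_{\chi\circ S}\KK$, which needs only the antipode, not its inverse) is exactly the content of the winding-automorphism argument.
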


\begin{proof}
Suppose $S^{-1}$ exists. Let $V$ be finite dimensional representation and $E$ an injective left $H$-module. By the isomorphism $\Hom_{H}(U, E\otimes V) \simeq \Hom_{H}(U \otimes V^{*}, E)$ of Lemma \ref{isoHopf},  $E \otimes V$ is injective in $\Mod{H}$, because $\Hom_{H}(-, E\otimes V)$ is the composition of the exact functors $-\otimes_{\KK} V^*$ and $\Hom_H(-, E)$. Choosing $E = E(\KK)$ as the injective hull of the trivial $H$-module $\KK$, we have that $V \simeq \KK \otimes V \subseteq E \otimes V$ and therefore $E(V) \subseteq E \otimes V$. Hence if $E(\KK)$ is locally finite, then also all injective hulls of finite dimensional representations are.

Suppose all finite dimensional representations of $H$ are $1$-dimensional. For any character $\chi\in \Xi(H)$ let $_\chi\KK := \KK v$ denote the $1$-dimensional irreducible representation of $H$  with action given by  $a\cdot v := \chi(a)v$, for all $a\in H$.  Any $1$-dimensional irreducible representation of $H$ is of that form and the trivial representation in particular is $_\epsilon\KK$. For a character $\chi \in \Xi(H)$ we define the right winding automorphism (see \cite{BrownGoodearl}*{I.9.25}):
$$ \sigma := \tau^r_\chi : H\rightarrow H, \qquad \mbox{ given by } \qquad \sigma(a) = \sum_{(a)} a_1\chi(a_2).$$
The inverse of $\sigma$ is given by $\sigma^{-1}=\tau^r_{\chi \circ S}$.
Note that $\epsilon \circ\sigma = \chi$, i.e. $\chi\circ\sigma^{-1}=\epsilon$, because
\[\epsilon(\sigma(a)) = \sum_{(a)} \epsilon(a_1)\chi(a_2) = \chi\left(\sum_{(a)} \epsilon(a_1)a_2\right) = \chi(a), \qquad \forall a\in H\]

Given any automorphism $\sigma$ of $H$ and left $H$-module $M$ we denote by $_\sigma M$ the left $H$-module with $H$-action given by 
$a.m := \sigma(a)m$, for all $a\in H$ and $m\in M$. If $N \subseteq M$ is any essential extension of left $H$-modules, 
then ${_\sigma N} \subseteq {_\sigma M} $ is also an essential extension of left $H$-modules, because for any $0\neq m\in M$ there exists $a\in H$ with $0\neq am \in N$. Thus $ \sigma^{-1}(a) \cdot m = am\in N\setminus\{0\}$.

In particular, for a character $\chi \in \Xi(H)$ and the winding automorphism $\sigma=\tau^r_\chi$ we have that the $H$-action on $_{\sigma^{-1}}(_\chi\KK)$ is equal to the trivial action, because
\[ a.v = \sigma^{-1}(a)\cdot v = \chi(\sigma^{-1}(a))v = \epsilon(a)v, \qquad \forall a\in H.\]
Thus, if ${_\chi\KK} \subseteq M$ is an essential extension of left $H$-modules, then 
${_\epsilon\KK} = {_{\sigma^{-1}}(_\chi\KK)} \subseteq {_{\sigma^{-1}}M}$ is an essential extension of left $H$-modules and if furthermore any finitely generated essential extension $M$ of $_\epsilon\KK$ is finite dimensional, then ${_{\sigma^{-1}}M}$ must be finite dimensional and so must be $M$. 
\end{proof}

\subsubsection{} A large class of algebras whose finite dimensional representations are $1$-dimensional has been found by Goodearl and Letzter:

\begin{thm}[{Goodearl-Letzter, \cite{Goodearl-Letzter}*{Theorem 2.3}}]\label{GL_SIG}
Let $A=\KK[x_1][x_2; \sigma_2, \delta_2] \cdots [ x_n; \sigma_n, \delta_n]$ be an iterated Ore extension over $\KK$.
For $1\leq i \leq n$ set $A_i=\KK[x_1][x_2; \sigma_2, \delta_2] \cdots [ x_i; \sigma_i, \delta_i]$ and assume that the following conditions hold:
\begin{enumerate}
\item[(a)] $\sigma_i$ is a $\KK$-algebra automorphism of $A_{i-1}$, and $\delta_i$ is a $\KK$-linear $\sigma_i$-derivation of $A_{i-1}$.
\item[(b)] For each $j=1,\ldots, i-1$ there exists $\lambda_{ij}\in \KK^\times$ such that $\sigma_i(x_j)=\lambda_{ij}x_j$.
\item[(c)] There exists $q_i \in \KK^\times$ such that $\delta_i\sigma_i(x_j)=q_o\sigma_i\delta_i(x_j)$ for $j=1,\ldots, i-1$.
\item[(d)] Either $q_i$ is not a root of unity or $q_i=1$ and the $\chr(\KK)=0$.
\end{enumerate}
Further assume that the subgroup $\Lambda$ of $\KK^\times$ generated by the $\lambda_{ij}$ is torsionfree. Then all prime ideals of $A$ are completely prime.
\end{thm}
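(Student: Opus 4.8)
The plan is to argue by induction on the number $n$ of indeterminates, at each stage reducing to the quantum affine space case already recorded in Theorem~\ref{GL}. The base case $A_1=\KK[x_1]$ is a commutative polynomial domain, so every prime ideal of it is trivially completely prime. For the inductive step I write $S=A_i=R[x;\sigma,\delta]$ with $R=A_{i-1}$, $\sigma=\sigma_i$ and $\delta=\delta_i$, and assume that every prime ideal of $R$ is completely prime. Given a prime $P$ of $S$, the first move is to pass to the largest $(\sigma,\delta)$-invariant ideal $Q$ of $R$ contained in $P$. This $Q$ is $(\sigma,\delta)$-prime and $QS\subseteq P$, so that factoring out $QS$ gives $S/QS\cong (R/Q)[x;\bar\sigma,\bar\delta]$; I may thus assume that $R$ is $(\sigma,\delta)$-prime and that $P$ contracts to $0$ in $R$. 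The inductive hypothesis guarantees that the ordinary primes of $R$ are completely prime, and what remains is to show that $S/P$ is a domain in this reduced situation.

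The decisive point is the elimination of the derivation, and this is exactly what conditions (a)--(d) are designed for. Condition (b) says $\sigma$ acts diagonally on the lower generators with eigenvalues $\lambda_{ij}\in\KK^\times$, so $S$ carries a natural $(\KK^\times)^n$-grading; conditions (c) and (d) say that $\delta$ is a $q_i$-skew derivation with $q_i$ either equal to $1$ in characteristic $0$ or not a root of unity. Under precisely these hypotheses Cauchon's deleting-derivations procedure applies: after inverting a suitable Ore set of $\sigma$-eigenvectors one trades each $\delta_i$ for an inner derivation and transforms $A$ into a multiparameter quantum affine space $\qas$ whose commutation scalars are monomials in the $\lambda_{ij}$. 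The procedure furnishes an inclusion-preserving correspondence between $\operatorname{Spec}(A)$ and $\operatorname{Spec}(\qas)$ that respects complete primeness stratum by stratum, since within each stratum complete primeness is detected in the common fraction rings of the intermediate flat localizations. The non-root-of-unity hypothesis (d) is what guarantees that these localizations are well behaved and that the algorithm terminates at a genuine quantum affine space.

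Finally I would transport the conclusion across this correspondence. Since $\Lambda=\langle\lambda_{ij}\rangle$ is assumed torsionfree, the subgroup generated by the commutation scalars of $\qas$ is a torsionfree subgroup of $\Lambda$, so Theorem~\ref{GL} applies and every prime of $\qas$ is completely prime. Pulling this back shows $S/P$ is a domain, completing the induction. The main obstacle I anticipate is the elimination step itself: one must verify that, in the reduced situation where $P$ contracts trivially to $R$, either $P=0$ or $P$ becomes, after localization, generated by a single normal element that is completely-prime-generating, and that this property survives de-localization back to $S$. This is exactly where torsionfreeness of $\Lambda$ is indispensable, since a root-of-unity eigenvalue would permit a factorization of the type $x^m-\lambda$ and hence a non-domain prime factor, the very obstruction already excluded for quantum affine spaces in Theorem~\ref{GL}.
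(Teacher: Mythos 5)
First, a point of reference: the paper contains no proof of this statement at all --- it is imported verbatim as Goodearl--Letzter \cite{Goodearl-Letzter}*{Theorem 2.3} --- so the only benchmark is the original Goodearl--Letzter argument. That argument is an induction on $n$ carried out entirely inside the theory of $q$-skew derivations ($\delta_i\sigma_i=q_i\sigma_i\delta_i$), built on \cite{Goodearl92} and their Memoir on skew and $q$-skew polynomial rings; it never removes the derivations, and the only role of hypothesis (d) there is to guarantee that all $q_i$-binomial coefficients are nonzero, a condition that holds equally in both alternatives of (d).

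Your proposal has a genuine gap exactly at what you yourself call the decisive point. Cauchon's deleting-derivations procedure is not available under hypotheses (a)--(d). First, it requires every $q_i$ to be a non-root of unity: the embedding is defined by a series whose coefficients involve $(1-q_i)^{-n}$ and $[n]_{q_i}!^{-1}$, which are meaningless when $q_i=1$. But alternative (d) expressly allows $q_i=1$ in characteristic zero --- you silently rewrote (d) as ``the non-root-of-unity hypothesis'' --- and this is not a degenerate corner: it contains the Weyl algebras and the enveloping algebras of nilpotent Lie algebras (all $\lambda_{ij}=1$, $q_i=1$), for which your argument says nothing, and which is precisely the part of the theorem that a reduction to $\qas$ cannot reach. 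Second, the procedure (in any of its formulations, e.g.\ for CGL extensions) needs the $\delta_i$ to be locally nilpotent, or a rational torus action from which local nilpotence follows; neither is assumed nor implied by (a)--(d). For instance, $\KK[y][x;\sigma,\delta]$ with $\sigma(y)=\lambda y$ and $\delta(y)=y^2$, where $\lambda$ is not a root of unity, satisfies (a)--(d) with $q=\lambda^{-1}$ and $\Lambda=\langle\lambda\rangle$ torsionfree, yet $\delta^n(y)\neq 0$ for all $n$. So your localization-and-transport scheme proves only a proper sub-case (essentially the CGL case) of the stated theorem; to cover the theorem as stated one must, as Goodearl and Letzter do, analyse a prime $P$ of $A_i=A_{i-1}[x_i;\sigma_i,\delta_i]$ directly via the $q$-Leibniz expansion, using only the non-vanishing of the $q_i$-binomial coefficients supplied by (d) together with the torsionfreeness of $\Lambda$, in the same spirit as the quantum affine space case of Theorem \ref{GL}.
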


In particular the only finite dimensional representations are the $1$-dimensional ones, under our running assumptions of $\KK$ being algebraically closed and having characteristic zero. Examples of algebras that can be presented as in Goodearl and Letzter's Theorem are certain quantum algebras over $\KK$, like the  multiparameter coordinate ring of {\it quantum $n\times n$ matrices},  $\cO_{\lambda, \bq}(M_{n}(\KK))$, the multiparameter {\it quantized Weyl algebras of degree $n$},  $A_n^{Q,\Gamma}(\KK)$, the coordinate ring of {\it quantum symplectic $2n$-space}, $\cO_q(\mathfrak{sp} \KK^{2n})$ and the coordinate ring of {\it quantum Euclidean $n$-space}, $\cO_q(\mathfrak{o} \KK^n)$ (see the survey \cite{Goodearl_survey} for definitions).

\subsubsection{}
Skryabin showed in \cite{Skryabin}*{Corollary 1} that a Hopf algebra $H$ has a bijective antipode, provided it  can be embedded into a left perfect ring $Q$ such that $Q$ is an essential extension of $H$ as right $H$-module.  Hence Theorem \ref{Reduction to the trivial module} applies in particular to semiprime Noetherian Hopf algebras.

\subsubsection{}
  Recall that a graded Hopf algebra is a Hopf algebra $H$ equipped with a grading $H=\bigoplus_{n\geq 0} H_n$ such that $H$ is simultaneously a graded algebra and a graded coalgebra, and the antipode preserves the given grading. Such a graded Hopf algebra is called connected if $H_0$ is one-dimensional. Recently, Zhou {\emph{et al.}} showed in \cite{Zhou-Shen-Lu}*{Theorem B} that any connected graded Hopf algebra $H$ of finite Gelfand-Kirillov dimension over a field $\KK$ of characteristic zero, can be represented as an iterated Hopf Ore extension (IHOE) of the form $H = \KK[x_{1}][x_{2};\delta_{2}] \ldots [x_{n}; \delta_{n}]$.  If for all $1\leq i<j \leq n$ one had  $\delta_j(x_i)\in \KK[x_{1}][x_{2};\delta_{2}] \ldots [x_{i}; \delta_{i}]^+$, then by induction and the following Lemma one could conclude that the augmentation ideal $H^+$ of $H$ is polynormal and hence $H$ is essentially closed.


\begin{lem}\label{polyLemma} Let $R$ be a ring, $\delta$ a derivation of $R$ and $A=R[y;\delta]$. If $I$ is a polynormal ideal of $R$ with a  normalizing sequence $(x_1,\ldots, x_n)$ such that $ \delta(x_i) \in Rx_1 + \cdots + Rx_i$ holds, for all $1\leq i \leq n$. Then $AI$ is polynormal.
\end{lem}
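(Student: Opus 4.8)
The plan is to verify that the very same sequence $(x_1,\dots,x_n)$ is a normalizing sequence of generators of $AI$ when regarded inside $A=R[y;\delta]$. Writing $I_j=Rx_1+\cdots+Rx_j$ (with $I_0=0$ and $I_n=I$), the first routine point is that $AI=Ax_1+\cdots+Ax_n$, which follows because $AR=A$ and $I=\sum_i Rx_i$. Since polynormality of $AI$ amounts to the partial sums $Ax_1+\cdots+Ax_j$ being two-sided with $\bar x_{j+1}$ normal in each quotient, it then remains to check, for each $j$, that the image $\bar x_{j+1}$ of $x_{j+1}$ is normal in $A/(Ax_1+\cdots+Ax_j)$.

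Next I would control the partial sums. Since $(x_1,\dots,x_n)$ is normalizing in $R$, each $I_j$ is a two-sided ideal of $R$. The hypothesis $\delta(x_i)\in Rx_1+\cdots+Rx_i\subseteq I_i$ ensures that every $I_j$ is $\delta$-invariant: for $r\in R$ and $i\le j$ one has $\delta(rx_i)=\delta(r)x_i+r\delta(x_i)\in I_j$. Consequently $AI_j=I_jA$ is a two-sided ideal of $A$ and the canonical map yields an isomorphism of Ore extensions $A/AI_j\cong (R/I_j)[y;\bar\delta]$, where $\bar\delta$ is the derivation induced by $\delta$ on $\bar R:=R/I_j$. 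Under this isomorphism $Ax_1+\cdots+Ax_j=AI_j$, so the problem reduces to showing that $\bar x_{j+1}$ --- which is normal in $\bar R$ by the normalizing sequence hypothesis and satisfies $\bar\delta(\bar x_{j+1})\in \bar R\,\bar x_{j+1}$ because $\delta(x_{j+1})\in I_{j+1}$ --- is normal in the Ore extension $\bar R[y;\bar\delta]$.

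The crux is therefore the following self-contained claim: if $x$ is a normal element of a ring $S$ (so $Sx=xS$) and $\partial$ is a derivation of $S$ with $\partial(x)\in Sx$, then $x$ is normal in $B=S[y;\partial]$. To prove it I would first write $\partial(x)=cx=xd$ for suitable $c,d\in S$, both possible since $\partial(x)\in Sx=xS$, giving the clean identities $xy=(y-c)x$ and $yx=x(y+d)$. An induction on $k$ then shows $xy^k\in Bx$ and $y^kx\in xB$; combining these with the $S$-normality $Sx=xS$ to absorb the leading coefficients yields $Bx\subseteq xB$ and $xB\subseteq Bx$, hence $Bx=xB$. I expect this claim to be the main obstacle --- not because the computation is long, but because $x$ may be a zero-divisor, so there is no well-defined conjugation automorphism attached to $x$ and the argument must be carried out entirely at the level of the set equality $Sx=xS$. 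The hypothesis $\partial(x)\in Sx$ is precisely what keeps conjugation by $y$ inside $xB$ (respectively $Bx$), and it is here that the assumption $\delta(x_i)\in Rx_1+\cdots+Rx_i$ is consumed. Applying the claim with $S=\bar R$, $\partial=\bar\delta$ and $x=\bar x_{j+1}$ completes the induction and shows that $AI$ is polynormal.
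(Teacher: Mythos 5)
Your proof is correct and takes essentially the same route as the paper's: the paper argues by induction on $n$, whose base case is exactly your key claim (a normal element $x$ of $S$ with $\delta(x)=fx=xg\in Sx$ stays normal in $S[y;\delta]$ via the identities $yx=x(y+g)$, $xy=(y-f)x$), and whose inductive step passes to $\overline{R}=R/Rx_1$ and $\overline{A}=A/Ax_1\simeq\overline{R}[\overline{y};\overline{\delta}]$ — so your chain of quotients $(R/I_j)[y;\overline{\delta}]$ is just the paper's induction unrolled. The only organizational difference is that you verify normality of each $\overline{x}_{j+1}$ directly in $A/AI_j$ (checking $\delta$-invariance of the partial sums $I_j$ once and for all), whereas the paper peels off one generator at a time.
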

\begin{proof}
We prove this lemma by induction on the number of generators of $I$. For $n=1$, $I=Rx=xR$ is generated by a normal element, which satisfies $\delta(x) = fx = xg$ for some $f,g\in R$. Then $yx =xy+\delta(x) = x(y+g)$ and $xy=yx-\delta(x) = (y - f)x$ shows $Ax=xA$, i.e. $AI=Ax$ is polynormal.

Let $n\geq 1$ and suppose we have proven the statement of the lemma for all rings $R$ with derivations $\delta$ and polynormal ideals $I$ generated by a normalizing sequence $(x_1,\ldots, x_n)$ that satisfies  $ \delta(x_i) \in \sum_{j=1}^i Rx_j$, for all $1\leq i \leq n$. Suppose that $I$ is a polynormal ideal generated by a normalizing sequence $(x_1,\ldots, x_n, x_{n+1})$ that satisfies $ \delta(x_i) \in \sum_{j=1}^i Rx_j$, for all $1\leq i \leq n+1$. Let $A=R[y;\delta]$. Since $x_1$ is normal in $R$, we know that $x_1$ is normal in $A$ and hence $Ax_1$ is an ideal. Let $\overline{R}=R/Rx_1$ and $\overline{A} = A/Ax_1$. Then $\delta$ extends to a derivation $\overline{\delta}$ of $\overline{R}$ such that $\overline{R}[\overline{y};\overline{\delta}] \simeq \overline{A}$. The ideal $\overline{I}=I/Rx_1$ is polynormal, generated by the normalizing sequence $(\overline{x_2}, \ldots, \overline{x_{n+1}})$ that satisfies $\overline{\delta}(\overline{x_i}) \in \sum_{j=1}^i \overline{R}_j \overline{x_j}$, where $\overline{x_i}=x_i+Rx_1$. By induction $\overline{A}\overline{I} = AI/Ax_1$ is a polynormal ideal of $\overline{A}$ and therefore $AI$ is a polynormal ideal of $A$.
\end{proof}

Consider an iterated Hopf-Ore extension  of the  form $H = \KK[x_{1}][x_{2};\delta_{2}] \ldots [x_{n}; \delta_{n}]$ as  in the case of  \cite{Zhou-Shen-Lu}*{Theorem B} for a connected graded Hopf algebra $H$ of finite Gelfand-Kirillov dimension. Set $H_{(i)}:=\KK[x_{1}][x_{2};\delta_{2}] \ldots [x_{i}; \delta_{i}]$ for all $1\leq i \leq n$.
Note that the augmentation ideal $H_{(i)}^+ = \Ker(\epsilon)\cap H_{(i)}$ is generated by $x_1, \ldots, x_i$ as left ideal of $H_{(i)}$. Moreover, $\epsilon(\delta_{i+1}(x_j)) = \epsilon(x_{i+1})\epsilon(x_j) - \epsilon(x_j)\epsilon(x_{i+1})=0$ shows 
$\delta_{i+1}(x_j) \in H_{(i)}^+ = H_{(i)} x_1 + \cdots + H_{(i)} x_i$, for all $j\leq i$. Assuming that
\begin{equation}\label{eqpoly} \delta_{i+1}(x_j)\in H_{(i)} x_1 + \cdots + H_{(i)} x_j\end{equation}
holds for all $j\leq i$, we can conclude that $H^+$ is a polynormal ideal with normalizing sequence $\{x_1,\ldots, x_n\}$, which can be proven by using Lemma \ref{polyLemma} at each step. More concretely, at the first step, the augmentation ideal of the Hopf algebra  $H_{(1)}=\KK[x_{1}]$ is generated by $x_1$ and hence polynormal. Suppose we have already verified that the augmentation ideal $H_{(i)}^+$  is polynormal with normalizing sequence $(x_1, \ldots, x_i)$, for some $1\leq i \leq n$. Then assumption (\ref{eqpoly}) implies 
$\delta_{i+1}(x_j) \in  H_{(i)} x_1 + \cdots + H_{(i)} x_j$, for all $j \leq i$. Thus we can apply  Lemma \ref{polyLemma} and conclude that $H_{(i+1)} H_{(i)}^+ = H_{(i+1)}x_1 + \ldots + H_{(i+1)}x_i$ is a polynormal ideal of $H_{(i+1)}$. The fact that  $H_{(i+1)}/H_{(i+1)}H_{(i)}^+ = \KK[x_{i+1}]$ is commutative implies that
	$$H_{(i+1)}^+ = H_{(i+1)}H_{(i)}^+  + H_{(i+1)}x_{i+1}  = H_{(i+1)}x_1 + \cdots + H_{(i+1)}x_{i+1}$$ is polynormal. In this case Proposition \ref{Ideals-with-normalizing-sequence-of-generators} applies and shows that $E(\KK)$, the injective hull of the trivial representation $\KK$, is locally finite. Since affine Noetherian Hopf algebra domains have bijective antipode by Skryabin's result \cite{Skryabin}*{Corollary 1}, Theorem~\ref{Reduction to the trivial module} applies and shows that $H$ is essentially closed.
	
Unfortunately we were not able to verify whether \cite{Zhou-Shen-Lu}*{Theorem B}  allows us to choose the generators $x_1,\ldots, x_n$ in such a way that (\ref{eqpoly}) holds.

\subsection{Locally finite representations over crossed products}

McConnell proved in 
\cite{McConnell1967}*{Theorem 4.2} that a finite dimensional Lie algebra $\g$ is nilpotent if and only if every ideal of $U(\g)$ has the Artin-Rees property. Hence in particular the augmentation ideal $P=\mathrm{Ker}(\epsilon)$ of $U(\g)$ has this property and since $U(\g)/P = \KK$, Proposition \ref{Ideals-with-normalizing-sequence-of-generators}  applies, showing that $\Loc{U(\g)}$ is essentially closed. Donkin proved in \cite{Donkin}*{Proposition 2.2.2} that essential extensions of the trivial representation of  a solvable Lie algebra $\g$ are locally finite, by using the unique maximal nilpotent ideal $\h$ of $\g$. 
By \cite{Pickel}*{Theorem 7}, the augmentation ideal $U(\h)^+ := U(\h)\cap \mathrm{Ker}(\epsilon)$ of $U(\h)$ has the strong Artin-Rees property and Donkin goes on to prove that $U(\g)U(\h)^+$ also has the strong Artin-Rees property in $U(\g)$. Since $\g/\h$ is abelian, $U(g)/U(\g)U(\h)^+$ is commutative and Proposition \ref{Ideals-with-normalizing-sequence-of-generators} applies again.
Note that  $U(\g) = U(\h) \#_\sigma U(\g/\h)$ is a crossed product. A similar argument works in the case of the group ring of a polycyclic-by-finite group.

Let $\pi:H\rightarrow \overline{H}$ be a surjective homomorphism of Hopf algebras, which splits as a coalgebra map, i.e. there exists a coalgebra map $\gamma:\overline{H}\rightarrow H$ with $\gamma\circ \pi = id$ and such that $\gamma(\overline{1})=1$. Then 
$$R=H^{co \overline{H}} = \{ h\in H \mid (id\otimes \pi)\Delta(h) = h\otimes \overline{1} \}$$
is a normal Hopf subalgebra and $H\simeq R\#_\sigma \overline{H}$ is a crossed product, i.e. $H=R\otimes \overline{H}$ as vector spaces, $\overline{H}$ acts weakly on $R$ by $\overline{h}\cdot a = \sum \gamma(\overline{h}_1)aS(\gamma(\overline{h}_2))$, for all $a\in R$ and $\overline{h}\in \overline{H}$. The multiplication in $R\#_\sigma \overline{H}$ is given by
$$ \left(a\# \overline{h}\right) \left(b\# \overline{k}\right)  = \sum a (\overline{h}_1\cdot b)\sigma(\overline{h}_2,\overline{k}_1)\# \overline{h}_3\overline{k}_2$$
and the cocycle $\sigma:\overline{H}\times \overline{H} \rightarrow R$ is given by 
$\sigma(\overline{h},\overline{k}) = \sum \gamma(\overline{h}_1)\gamma(\overline{k}_1)S(\gamma(\overline{h}_2\overline{k}_2))$(see \cites{Montgomery, BlattnerCohenMontgomery}). 

\begin{thm}
	Let $H$ be an affine Noetherian Hopf algebra over $\KK$ with bijective antipode and surjective Hopf algebra morphism $\pi:H\rightarrow \overline{H}$ onto a commutative Hopf algebra $\overline{H}$, such that $\pi$ splits as a coalgebra map. Let $R=H^{co\overline{H}}$.
	If  $R^+$ has the strong Artin-Rees property then $HR^+$ has the strong Artin-Rees property and locally finite representations of $H$ are closed under taking injective hulls.
\end{thm}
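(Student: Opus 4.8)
The plan is to prove the two assertions in sequence, deducing the essential closure of $\Loc{H}$ from the strong Artin--Rees property of $HR^+$ together with Theorem~\ref{Reduction to the trivial module}. First I would record the structure of the ideal $HR^+$. Since $R=H^{co\overline{H}}$ is a normal Hopf subalgebra, $HR^+=R^+H$ is a two-sided (Hopf) ideal, it is precisely $\Ker(\pi)$, and $H/HR^+\cong\overline{H}$ is commutative and affine, hence an affine PI-algebra. Moreover $R^+\subseteq\Ker(\epsilon)=H^+=\Ann_H(\KK)$, so $Q:=HR^+\subseteq H^+$ sits inside the annihilator of the trivial representation.

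The heart of the argument is an identification of the Rees ring $\mathcal{R}_H(HR^+)$ with a crossed product over $\mathcal{R}_R(R^+)$. Because $R$ is normal we have $(HR^+)^n=(R^+)^nH$, which as a left $R$-module is $(R^+)^n\otimes\overline{H}$, so as a graded vector space $\mathcal{R}_H(HR^+)=\bigoplus_{n\geq 0}\bigl((R^+)^n t^n\bigr)\otimes\overline{H}$. The weak action $\overline{h}\cdot a=\sum\gamma(\overline{h}_1)aS(\gamma(\overline{h}_2))$ satisfies $\epsilon(\overline{h}\cdot a)=\epsilon(\overline{h})\epsilon(a)$ and $\overline{h}\cdot(ab)=\sum(\overline{h}_1\cdot a)(\overline{h}_2\cdot b)$, hence preserves every power $(R^+)^n$; extended to $R[t]$ by fixing $t$, it preserves $\mathcal{R}_R(R^+)$ and each homogeneous component. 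As the cocycle $\sigma$ takes values in $R=\mathcal{R}_R(R^+)_0$, the crossed product structure of $H$ restricts to the Rees ring and gives a graded isomorphism $\mathcal{R}_H(HR^+)\cong\mathcal{R}_R(R^+)\#_\sigma\overline{H}$.

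Granting this, the step I expect to be the main obstacle is to show that $\mathcal{R}_R(R^+)\#_\sigma\overline{H}$ is Noetherian, knowing that $\mathcal{R}_R(R^+)$ is Noetherian (the strong Artin--Rees hypothesis on $R^+$) and that $\overline{H}$ is commutative and affine. This is the Hopf-theoretic analogue of Donkin's verification that $U(\g)U(\h)^+$ has a Noetherian Rees ring, where $\overline{H}=U(\g/\h)$ is a polynomial algebra. I would treat it by equipping $\overline{H}$ with a filtration (an algebra filtration from a finite generating set, or the coradical filtration) whose associated graded is, up to a finite normalizing piece coming from the component group, a commutative connected graded Hopf algebra and hence, in characteristic zero, a polynomial ring. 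Passing to the associated graded turns the crossed product into an iterated Ore extension over the Noetherian ring $\mathcal{R}_R(R^+)$, Noetherian by the Hilbert basis theorem for skew polynomial rings (in the spirit of Lemma~\ref{Extensions of Ideals with Noetherian Rees Rings}); lifting Noetherianity back through the filtration shows $\mathcal{R}_H(HR^+)$ is Noetherian, so $HR^+$ has the strong Artin--Rees property.

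Finally I would assemble the representation-theoretic conclusion. Having the strong, hence ordinary, Artin--Rees property, the ideal $Q=HR^+$ lies inside $\Ann_H(\KK)$ with $H/Q\cong\overline{H}$ an affine commutative PI-algebra, so Proposition~\ref{Ideals-with-normalizing-sequence-of-generators}(1) shows that every finitely generated essential extension of the trivial representation $\KK$ is finite dimensional. Consequently the injective hull $E(\KK)$ is locally finite, since every nonzero cyclic submodule $Hx$ of $E(\KK)$ meets the essential simple submodule $\KK$ and is therefore a cyclic essential extension of $\KK$. As $H$ has bijective antipode, Theorem~\ref{Reduction to the trivial module} upgrades this single computation to the statement that the injective hull of every finite dimensional irreducible representation is locally finite, i.e.\ $\Loc{H}$ is essentially closed.
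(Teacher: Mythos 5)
Your overall architecture coincides with the paper's proof: you identify the Rees ring $\cR_H(HR^+)$ with the crossed product $\cR_R(R^+)\#_\sigma\overline{H}$ (using that the weak action preserves the powers $(R^+)^n$ and that the cocycle takes values in $R$), and you assemble the conclusion in the same way, feeding $Q=HR^+\subseteq\Ker(\epsilon)$ with $H/Q\cong\overline{H}$ affine commutative into Proposition~\ref{Ideals-with-normalizing-sequence-of-generators}(1) and then invoking Theorem~\ref{Reduction to the trivial module} via the bijective antipode. The genuine gap sits exactly at the step you yourself flag as the main obstacle: the Noetherianity of $\cR_R(R^+)\#_\sigma\overline{H}$.

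Your claim that $\overline{H}$ admits a filtration whose associated graded is, ``up to a finite normalizing piece coming from the component group,'' a commutative \emph{connected} graded Hopf algebra and hence a polynomial ring, is false. You are conflating connectedness of the algebraic group $G$ with $\overline{H}=\cO(G)$ (trivial component group) with connectedness of $\overline{H}$ as a coalgebra (coradical equal to $\KK$); only the latter forces a commutative Hopf algebra in characteristic zero to be polynomial, and it corresponds to $G$ being unipotent, not connected. Already $\overline{H}=\KK[x^{\pm 1}]=\cO(\mathbb{G}_m)$, with $\mathbb{G}_m$ connected, defeats the plan: every power of $x$ is grouplike, so the coradical is all of $\overline{H}$, and for any exhaustive algebra filtration that is nontrivial (i.e.\ $F_0\neq\overline{H}$) at least one of $x,x^{-1}$ has positive degree, whence the classes of $x$ and $x^{-1}$ in $\mathrm{gr}(\overline{H})$ are nonzero but their product vanishes (the element $1=xx^{-1}$ lies in strictly smaller filtration degree). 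So $\mathrm{gr}(\overline{H})$ has zero divisors and is never a polynomial ring; for the obvious filtration it is $\KK[u,v]/(uv)$. Moreover, the filtered-to-graded mechanism that should turn the crossed product into an iterated Ore extension --- the cocycle and the action degenerating in $\mathrm{gr}$ --- requires a coalgebra filtration with $F_0=\KK$, i.e.\ precisely the coalgebra connectedness that fails here; if $F_0$ must contain the grouplikes, then $\mathrm{gr}$ of the crossed product is again a crossed product with nontrivial cocycle over $\mathrm{gr}(\overline{H})$, and you face the original problem for a possibly worse ring. The paper circumvents this entirely: since $\overline{H}$ is affine commutative it is an \emph{algebra} image of a polynomial ring $U(\ab)$, with $\ab$ a finite-dimensional abelian Lie algebra, and the paper realizes $\cR_R(R^+)\#_\sigma\overline{H}$ as a ring image of a crossed product $\cR_R(R^+)\#_\sigma U(\ab)$, which is Noetherian by McConnell--Robson; this is the correct Hopf-theoretic analogue of the Donkin argument you allude to. Any repair of your approach would have to treat the grouplikes of $\overline{H}$ (the torus/character part) separately rather than hope they disappear in an associated graded.
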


\begin{proof}
It is enough to show that $HR^+$ has the (strong) Artin-Rees property, since $HR^+ = R^+H$ is an ideal contained in the augmentation ideal of $H$ and $H/HR^+=\overline{H}$ is commutative. Hence by Proposition \ref{Ideals-with-normalizing-sequence-of-generators}, any essential extension of the trivial representation is locally finite and by Theorem \ref{Reduction to the trivial module}, $\Loc{H}$ is essentially closed.
Since $H=R\#_\sigma \overline{H}$ is a crossed product, $\overline{H}$ acts weakly on $R$. This action can be extended to $R[t]$, where $\overline{h}\cdot t = \epsilon(h)t$, for all $\overline{h}\in \overline{H}$
 and one can form the crossed product  
 $R[t]\#_\sigma \overline{H} =  (R\#_\sigma \overline{H})[t\otimes 1] = H[t]$. By hypothesis, $\cR_R(R^+)$ is Noetherian and $\overline{H}$ acts also on it. Since $HR^+ = R^+H$ we also have $$(HR^+)^n t^n = (R^+)^nt^n \otimes \overline{H}  \subseteq \cR_R(R^+)\#\overline{H},$$ showing that the Rees ring $\cR_H(HR^+)$ of $HR^+$ can be identified with the crossed product $\cR_R(R^+)\#_\sigma \overline{H}$. As $\overline{H}$ is an affine commutative Hopf algebra and hence the coordinate ring $\cO(V)$ of an affine variety $V$, it is an image of the enveloping algebra $U(\ab)$ of a finite dimensional abelian Lie algebra (e.g. a polynomial ring). Hence $\cR_H(HR^+) = \cR_R(R^+)\#_\sigma \overline{H}$ is an image of a crossed product $\cR_R(R^+)\#_\sigma U(\ab)$, which is Noetherian by \cite{McConnell-Robson}.
\end{proof}

\subsection{Hopf algebras of low Gelfand-Kirillov dimension}
In a series of papers, Brown, Goodearl, Zhang and others started a program to classify Hopf algebras of low Gelfand-Kirillov dimension. 
Brown {\emph{et al.}} show in \cite{BrownGilmartinZhang}*{Theorem 2.2} that if $H$ is a Hopf algebra of finite Gelfand-Kirillov dimension which is connected graded as an algebra, then the antipode satisfies $S^{2} = id$. Therefore, the reduction in Theorem~\ref{Reduction to the trivial module} applies and we get that $\Loc{H}$ is essentially closed if and only if $E(\KK)$ is locally finite.

\subsubsection{} From the survey \cite{Brown-Zhang}, we recall that a famous result of Small and Warfield \cite{SmallWarfield84} says that an affine prime $\KK$-algebra of Gelfand-Kirillov dimension one is finitely generated over its center. Hence locally finite representations over an affine, prime Hopf algebra of Gelfand-Kirillov dimension one are always closed under essential extensions.

\subsubsection{}
Goodearl and Zhang classified affine Hopf algebras domains $H$ over an algebraically closed field $\KK$ of characteristic zero that  have Gelfand–Kirillov dimension two and satisfy the homological condition $\operatorname{Ext}_H^1(\KK,\KK)\neq 0$.

\begin{thm}[Goodearl \& Zhang {\cite{Goodearl-Zhang}*{Theorem 0.1}}]\label{GZ}
Let $\KK$ be algebraically closed of characteristic $0$, and let $H$ be a  Hopf $\KK$-algebra domain of Gelfand-Kirillov dimension $2$ satisfying $\operatorname{Ext}_H^1(\KK,\KK)\neq 0$. Then $H$ is Noetherian if and only if $H$ is affine if and only if $H$ is isomorphic to one of the following:
\begin{enumerate}
\item[(I)] The group algebra $\KK G$, where $G\simeq \ZZ^2$  or $G\simeq \ZZ \rtimes \ZZ$.
\item[(II)] The enveloping algebra $U(\g)$, where $\g$ is a $2$-dimensional Lie algebra.
\item[(III)] The Hopf algebras $A(n,q)$ in Example \ref{exampleA}.
\item[(IV)]The Hopf algebras $B(n, p_0, \ldots , p_s, q)$ in Example \ref{exampleB}.
\item[(V)] The Hopf algebras $C(n)$ in Example \ref{exampleC}.
\end{enumerate}
\end{thm}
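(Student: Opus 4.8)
Since this is a classification result, a proof splits into two halves, and the plan is to treat them separately. First I would verify that every algebra on the list --- the group algebras $\KK G$, the enveloping algebras $U(\g)$, and the three families $A(n,q)$, $B(n,p_0,\dots,p_s,q)$ and $C(n)$ --- really is an affine Noetherian Hopf domain of Gelfand-Kirillov dimension $2$ satisfying $\Ext_H^1(\KK,\KK)\neq 0$. This is routine inspection: each is an iterated Ore extension over a polynomial or Laurent polynomial ring, hence affine, Noetherian and a domain, its Gelfand-Kirillov dimension is computed to be $2$, and $\Ext^1\neq 0$ is witnessed by an explicit (skew-)primitive generator. The substantive direction, and the one I would concentrate on, is the converse: that either the affine or the Noetherian hypothesis forces $H$ onto the list. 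Once that is done the triple equivalence ``Noetherian $\Leftrightarrow$ affine $\Leftrightarrow$ one of the list'' is immediate, since every listed algebra is simultaneously affine and Noetherian.

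The engine for the converse is the homological hypothesis, and the first thing I would do is interpret it. For any augmented algebra with augmentation ideal $\m=\Ker(\epsilon)$, inner derivations into the one-dimensional bimodule ${}_\epsilon\KK_\epsilon$ vanish, so $\Ext_H^1(\KK,\KK)$ is canonically the cotangent space $(\m/\m^2)^*$; thus $\Ext^1\neq 0$ is equivalent to $\m\neq\m^2$ and yields a nonzero $(\epsilon,\epsilon)$-derivation. The next, genuinely Hopf-theoretic, step is to upgrade this linear datum to an honest element: using the coalgebra structure one shows that $H$ must contain a nonzero skew-primitive element, and from the grouplike attached to it one extracts a Hopf subalgebra $K\subseteq H$ of Gelfand-Kirillov dimension one. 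By the known classification of such subalgebras, $K$ is forced to be either $\KK[\ZZ]=\KK[x^{\pm1}]$ (the grouplike case) or $\KK[x]$ with $x$ primitive (the enveloping-algebra case).

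With $K$ in hand I would then realize $H$ as an extension of $K$. Because $H$ is a domain of Gelfand-Kirillov dimension two over the one-dimensional $K$, exactly one further generator $y$ is adjoined, and the domain together with the Hopf structure forces $y$ to normalize $K$ via an automorphism $\sigma$ and a $\sigma$-derivation $\delta$, exhibiting $H$ as an Ore extension $K[y;\sigma,\delta]$ in which $y$ is itself (skew-)primitive. The task then becomes solving the compatibility equations imposed by coassociativity and the antipode on the data $(\sigma,\delta)$ together with the grouplike and character parameters; normalizing the solutions is precisely what produces the discrete invariants $n$, $q$ and $p_0,\dots,p_s$ labelling $A(n,q)$, $B(n,p_0,\dots,p_s,q)$ and $C(n)$, with the two degenerate solutions collapsing to $\KK G$ and $U(\g)$.

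The hard part will be exactly this final case analysis: proving that the admissible structure data fall into these five families and no others, and that distinct parameter values yield non-isomorphic Hopf algebras. This needs an exhaustive bookkeeping of the possible automorphisms, their eigenvalues, the associated cocycles, and rigidity arguments ruling out configurations that would raise the Gelfand-Kirillov dimension or destroy the domain property. I expect no soft structural shortcut here, and would anticipate reproducing the detailed computations of \cite{Goodearl-Zhang} to pin the list down completely.
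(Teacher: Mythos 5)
Note first that the paper does not prove this statement: it is quoted, with explicit attribution, from Goodearl and Zhang \cite{Goodearl-Zhang}*{Theorem 0.1} and used as an imported classification tool. The paper's own contribution comes only afterwards, in Examples \ref{exampleA}--\ref{exampleC} and the corollary following them, where each algebra on the list is checked to be essentially closed. So there is no internal proof to compare your proposal against; in the context of this paper the correct treatment of the statement is precisely what the authors do, namely cite it.

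Measured as a proof of the classification itself, your proposal has genuine gaps at exactly the points where the real work lies. The forward direction (each listed algebra is an affine Noetherian Hopf domain of GK-dimension $2$ with $\Ext_H^1(\KK,\KK)\neq 0$) is indeed routine. But in the converse: (i) upgrading a nonzero class in $\Ext_H^1(\KK,\KK)\cong(\m/\m^2)^*$ to a nonzero skew-primitive element of $H$ is not a formal consequence of the coalgebra structure --- a cotangent vector of an augmented algebra need not lift to a skew-primitive, and making this work is where the Hopf and GK-dimension hypotheses earn their keep; (ii) the claim that ``exactly one further generator $y$ is adjoined'' over the GK-dimension-one Hopf subalgebra $K$, so that $H=K[y;\sigma,\delta]$ with $y$ skew-primitive, is not merely unjustified but is contradicted by the list you are trying to derive: the algebras $B(n,p_0,\ldots,p_s,q)$ of family (IV) are generated over their GK-dimension-one Hopf subalgebra $\KK[x^{\pm 1}]$ by the $s\geq 2$ elements $y^{m_1},\ldots,y^{m_s}$, so your outline, followed literally, would produce only families (I), (II), (III) and (V) and miss (IV) entirely; (iii) the concluding case analysis and the non-isomorphism of distinct parameter values are explicitly deferred to \cite{Goodearl-Zhang}. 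Since you yourself anticipate ``reproducing the detailed computations'' of that paper, what you have written is a reasonable reading guide to the cited theorem, not a proof of it.
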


We go through the Hopf algebras appearing in the theorem one by one in relation to being essentially closed. 

\begin{example}\label{exampleA}[The Hopf algebras $A(n, q)$]
For a positive integer $n$, and $q \in \KK^{\times}$, let  $A = R[y; \sigma]$, where $R=\KK[x,x^{-1}]$ and $\sigma$ is given by $\sigma(x)=qx$. By \cite{Goodearl-Zhang}*{Construction 1.1}, there is a unique Hopf algebra structure on $A$ under which $x$ is grouplike and $y$ is skew primitive. More precisely
\[ \Delta(y)=y \otimes 1 + x^n \otimes y \qquad \epsilon(y)=0 \qquad \mbox{ and } \qquad S(y) = x^{-n}y.\]
Consider the quantum affine space $\cO_\bq(\KK^3)$ with $x_1x_2=x_2x_1$ and $x_1x_3 = qx_3x_1$ while $x_3x_2=q^{-1}x_2x_3$. Then $A\simeq \cO_\bq(\KK^3)/\langle x_1x_2-1, x_2x_1-1\rangle$. 
If $q$ is not a root of unity, then $\langle 1, q , q^{-1}\rangle = \langle q \rangle \subseteq \KK^\times$ is torsionfree and by Corollary  \ref{CorGL}, locally finite representations over $\cO_\bq(\KK^3)$ are essentially closed. The same is true for the quotient algebra $A$.
If $q$ is a root of unity, then $\cO_\bq(\KK^3)$ is a PI-algebra and so is $A$.
\end{example}

\begin{example}\label{exampleB}[The Hopf algebras $B(n, p_{0}, \ldots, p_{s}, q)$]
Let $n, p_{0}, \ldots, p_{s}$ be positive integers and let $q \in \KK^{\times}$ be such that
\begin{itemize}
\item[(a)] $s \geq 2$ and $1 < p_{0} < p_{1} < \ldots < p_{s}$;
\item[(b)] $p_{0} \mid n$ and $p_{1}, \ldots, p_{s}$ are relatively prime;
\item[(c)] $q$ is a primitive $l$th root of unity, where $l = (n/p_{0})p_{1}\ldots p_{s}$.
\end{itemize}
Let $m = p_{1}\ldots p_{s}$ and $m_{i} = m/p_{i}$. The Hopf algebra $B(n, p_{0}, \ldots, p_{s}, q)$ is the skew Laurent polynomial ring $A[x^{\pm}; \sigma]$ where $A$ is the subalgebra of the polynomial algebra $\KK[y]$ generated by the powers $y^{m_{i}}$ of the indeterminate $y$. The automorphism  $\sigma$ is the restriction of the $\KK$-algebra automorphism on $\KK[y]$ which sends $y$ to $qy$.  Since $q$ is a root of unity, $\KK[y][x^\pm; \sigma]$ is a PI-algebra, and so is $B(n,p_{0}, \ldots, p_{s}, q)$. Thus, by Theorem \ref{sufficient}, locally finite representations over $B(n,p_{0}, \ldots, p_{s}, q)$ are essentially closed.
\end{example}

\begin{example}\label{exampleC}[The Hopf Algebras $C(n)$]
Let $n$ be a positive integer and let $C(n) = \KK[y^{\pm}][x; (y^{n} - y)\frac{d}{dy}]$. There is a unique Hopf algebra structure on $C(n)$ such that $y$ is grouplike and $x$ is skew primitive, with $\Delta(x) = x \otimes y^{n-1} + 1 \otimes x$. The counit $\epsilon: C(n) \To \KK$ is such that $\epsilon(x) = 0$ and $\epsilon(y) = 1$. By Theorem~\ref{unmixed}, locally finite representations over the differential operator ring $C(n)$  are essentially closed.
\end{example}

\begin{cor} Let $\KK$ be algebraically closed of characteristic $0$, and let $H$ be an affine  Hopf $\KK$-algebra domain of Gelfand-Kirillov dimension $2$ satisfying $\operatorname{Ext}_H^1(\KK,\KK)\neq 0$. Then $\Loc{H}$ is essentially closed.
\end{cor}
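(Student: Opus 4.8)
The plan is to invoke the Goodearl--Zhang classification (Theorem \ref{GZ}) to reduce the statement to a finite list of cases, and then to dispatch each case by quoting a result established earlier in the paper. Since $H$ is affine, Theorem \ref{GZ} tells us that $H$ is isomorphic to one of the five families (I)--(V), so it suffices to verify that $\Loc{H}$ is essentially closed in each of them, one by one.

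First I would treat case (I), the group algebras $\KK G$ with $G\simeq\ZZ^2$ or $G\simeq\ZZ\rtimes\ZZ$. Both of these groups are polycyclic: $\ZZ^2$ is finitely generated abelian, and $\ZZ\rtimes\ZZ$ has a normal infinite cyclic subgroup with infinite cyclic quotient. Hence they are polycyclic-by-finite, and the result of Brown recorded in \cites{Donkin,musson} and cited in the introduction applies directly to give that $\Loc{\KK G}$ is essentially closed.

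Next, case (II): here $H=U(\g)$ for a $2$-dimensional Lie algebra $\g$, which is either abelian or the unique non-abelian (necessarily solvable) one. In the abelian case $U(\g)\cong\KK[x,y]$ is commutative affine, and the conclusion follows from Theorem \ref{sufficient}; in the non-abelian case, where $[x,y]=x$, one computes $yx-xy=-x$, so $U(\g)\cong\KK[x][y;\delta]$ is a differential operator ring over the commutative Noetherian algebra $\KK[x]$, and Theorem \ref{unmixed} applies. The three remaining families are exactly those analysed above: cases (III), (IV) and (V) are the Hopf algebras $A(n,q)$, $B(n,p_0,\ldots,p_s,q)$ and $C(n)$, which were shown to be essentially closed in Example \ref{exampleA}, Example \ref{exampleB} and Example \ref{exampleC} respectively. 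Assembling these observations completes the proof.

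Because every case has already been settled by an earlier result, there is essentially no genuine obstacle to overcome; this corollary is really just the bookkeeping step that collects the preceding examples and theorems. The only points requiring a moment of care are checking that the two groups in family (I) are polycyclic so that the classical group-ring result is applicable, and confirming that the non-abelian $2$-dimensional $\g$ genuinely presents $U(\g)$ as an \emph{unmixed} differential operator ring $\KK[x][y;\delta]$ over a commutative base, so that Theorem \ref{unmixed} rather than a mixed-Ore-extension argument suffices.
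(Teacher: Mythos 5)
Your proof is correct and follows the same overall strategy as the paper: reduce via the Goodearl--Zhang classification (Theorem~\ref{GZ}) and settle the five families one by one, with cases (I), (III), (IV), (V) handled exactly as the paper handles them (polycyclic group rings for (I), Examples~\ref{exampleA}, \ref{exampleB}, \ref{exampleC} for the rest). The one place you genuinely diverge is case (II): the paper disposes of both two-dimensional Lie algebras at once by citing Donkin's external result \cite{Donkin82} that $\Loc{U(\g)}$ is essentially closed for solvable $\g$, whereas you stay inside the paper's own machinery --- $U(\g)\cong\KK[x,y]$ is commutative affine, so Theorem~\ref{sufficient} applies in the abelian case, and $U(\g)\cong\KK[x][y;\delta]$ is an unmixed differential operator ring over the commutative Noetherian base $\KK[x]$, so Theorem~\ref{unmixed} applies in the non-abelian case. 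Your identification of the non-abelian $U(\g)$ as an unmixed extension is correct: $[x,y]=x$ gives $yx=xy-x$, i.e.\ $\delta(x)=-x$ is an honest derivation with no automorphism twist, so Theorem~\ref{unmixed} indeed suffices. What your route buys is a marginally more self-contained proof of case (II), relying only on results established in the paper rather than on Donkin's theorem; what the paper's route buys is brevity, since one citation covers both the abelian and non-abelian cases simultaneously. Both arguments are valid.
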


\begin{proof}
Going through Goodearl and Zhang's classification result, we have seen that $H$  is either isomorphic to a group algebra, an universal enveloping algebra or an algebra of type $A(n,q)$, $B(n, p_{0}, \ldots, p_{s}, q)$ or $C(n)$. Furthermore, we have seen in Examples \ref{exampleA}, \ref{exampleB}, \ref{exampleC}  that $\Loc{H}$ is essentially closed in case that $H$ is belongs to one of  the latter three classes of algebras. If $H\simeq U(\g)$ with $\g$ a two-dimensional Lie algebra, then $\g$ is either Abelian or isomorphic to the the unique $2$-dimensional solvable, non-nilpotent Lie algebra (see \cite{ErdmannWildon}*{Theorem 3.1}). In both cases, $\Loc{U(\g)}$ is essentially closed by Donkin's result \cite{Donkin82}*{Proposition 2.2.2}. In the case $H \simeq \KK G$ with $G = \ZZ^2$ or $G=\ZZ \rtimes \ZZ$ then again $\Loc{H}$ is essentially closed again by Donkin's result as $G$ is polycyclic \cite{Donkin}*{Theorem 1.1.1}.
\end{proof}

\subsubsection{}
In \cite{Wang-Zhang-Zhuang}, Wang {\emph{et al.}} found a class of affine Hopf algebra domains $H$ over $\KK$ of Gelfand-Kirillov dimension $2$ with $\operatorname{Ext}_H^1(\KK,\KK)= 0$. The algebras they found satisfy a polynomial identity and hence satisfy the condition that locally finite representations are essentially closed. Wang {\emph{et al.}} conjectured that their family of Hopf algebras together with the Hopf algebras found by Goodearl and Zhang (see Theorem \ref{GZ}) are all affine Hopf algebra domains of Gelfand-Kirillov dimension $2$. If true, then any affine Hopf algebra domains of Gelfand-Kirillov dimension less or equal to $2$ would be essentially closed. Recall, that $H=U(\mathfrak{sl}_2)$ is an example of an affine Hopf algebra domain with Gelfand-Kirillov dimension three, whose locally finite representations are not closed under essential extensions.

\bigskip

{\bf Acknowledgments}

\medskip
The authors would like to thank the following persons: Ken Brown for having read and commented on an earlier version of this paper and for many valuable conversations and suggestions over the last years; Paula Carvalho and Allen Bell for comments on the strong second layer conditions and the Artin-Rees property; the referee for her/his careful reading of the manuscript, for suggestions that improved Theorem 2.9  and for having spotted a gap in an earlier version of section 4.1.3. This work was initiated when Can Hat\.{i}po\u{g}lu visited the Mathematics Department of the University of Porto in 2018. He wishes to thank the department for the hospitality and financial support.
The author Christian Lomp was partially supported by CMUP, which is financed by national funds through FCT – Funda\c{c}\~{a}o para a Ci\^{e}ncia e a Tecnologia, I.P., under the project with reference UIDB/00144/2020.

\begin{bibdiv}
 \begin{biblist}

\bib{AlevDumas}{article}{
   author={Alev, J.},
   author={Dumas, F.},
   title={Invariants du corps de Weyl sous l'action de groupes finis},
   language={French, with English summary},
   journal={Comm. Algebra},
   volume={25},
   date={1997},
   number={5},
   pages={1655--1672},
   issn={0092-7872},
   review={\MR{1444026}},
   doi={10.1080/00927879708825943},
}

\bib{Bell}{article}{
   author={Bell, Allen D.},
   title={Localization and ideal theory in iterated differential operator
   rings},
   journal={J. Algebra},
   volume={106},
   date={1987},
   number={2},
   pages={376--402},
   issn={0021-8693},
   review={\MR{880964}},
   doi={10.1016/0021-8693(87)90003-2},
}

\bib{BlattnerCohenMontgomery}{article}{
	author={Blattner, Robert J.},
	author={Cohen, Miriam},
	author={Montgomery, Susan},
	title={Crossed products and inner actions of Hopf algebras},
	journal={Trans. Amer. Math. Soc.},
	volume={298},
	date={1986},
	number={2},
	pages={671--711},
	issn={0002-9947},
	review={\MR{860387}},
	doi={10.2307/2000643},
}

 \bib{BrownCarvalhoMatczuk}{article}{
   author={Brown, Ken},
   author={Carvalho, Paula A. A. B.},
   author={Matczuk, Jerzy},
   title={Simple modules and their essential extensions for skew polynomial
   rings},
   journal={Math. Z.},
   volume={291},
   date={2019},
   number={3-4},
   pages={877--903},
   issn={0025-5874},
   review={\MR{3936092}},
   doi={10.1007/s00209-018-2128-8},
}

\bib{BrownGilmartinZhang}{article}{
   author={Brown, K. A.},
   author={Gilmartin, P.},
   author={Zhang, J. J.},
   title={Connected (graded) Hopf algebras},
   journal={Trans. Amer. Math. Soc.},
   volume={372},
   date={2019},
   number={5},
   pages={3283--3317},
   issn={0002-9947},
   review={\MR{3988611}},
   doi={10.1090/tran/7686},
}

 \bib{BrownGoodearl}{book}{
   author={Brown, Ken A.},
   author={Goodearl, Ken R.},
   title={Lectures on algebraic quantum groups},
   series={Advanced Courses in Mathematics. CRM Barcelona},
   publisher={Birkh\"{a}user Verlag, Basel},
   date={2002},
   pages={x+348},
   isbn={3-7643-6714-8},
   review={\MR{1898492}},
   doi={10.1007/978-3-0348-8205-7},
}

\bib{Brown2015}{article}{
   author={Brown, K. A.},
   author={O'Hagan, S.},
   author={Zhang, J. J.},
   author={Zhuang, G.},
   title={Connected Hopf algebras and iterated Ore extensions},
   journal={J. Pure Appl. Algebra},
   volume={219},
   date={2015},
   number={6},
   pages={2405--2433},
   issn={0022-4049},
   review={\MR{3299738}},
   doi={10.1016/j.jpaa.2014.09.007},
}

\bib{Brown-Zhang}{article}{
    author={Brown, Ken A.},
    author={Zhang, J. J.},
    title={Survey on Hopf algebras of GK-dimension 1 and 2},
    year={2020},
    eprint={2003.14251},
    publisher={arXiv:2003.14251},
}

 \bib{BrzezinskiWisbauer}{book}{
   author={Brzezinski, Tomasz},
   author={Wisbauer, Robert},
   title={Corings and comodules},
   series={London Mathematical Society Lecture Note Series},
   volume={309},
   publisher={Cambridge University Press, Cambridge},
   date={2003},
   pages={xii+476},
   isbn={0-521-53931-5},
   review={\MR{2012570}},
   doi={10.1017/CBO9780511546495},
}

\bib{CarvalhoLompPusat}{article}{
   author={Carvalho, Paula A. A. B.},
   author={Lomp, Christian},
   author={Pusat-Yilmaz, Dilek},
   title={Injective modules over down-up algebras},
   journal={Glasg. Math. J.},
   volume={52},
   date={2010},
   number={A},
   pages={53--59},
   issn={0017-0895},
   review={\MR{2669095}},
   doi={10.1017/S0017089510000261},
}

\bib{CarvalhoHatipogluLomp}{article}{
   author={Carvalho, Paula A. A. B.},
   author={Hatipo\u{g}lu, Can},
   author={Lomp, Christian},
   title={Injective hulls of simple modules over differential operator
   rings},
   journal={Comm. Algebra},
   volume={43},
   date={2015},
   number={10},
   pages={4221--4230},
   issn={0092-7872},
   review={\MR{3366571}},
   doi={10.1080/00927872.2014.941469},
}

\bib{CarvalhoMusson}{article}{
   author={Carvalho, Paula A. A. B.},
   author={Musson, Ian M.},
   title={Monolithic modules over Noetherian rings},
   journal={Glasg. Math. J.},
   volume={53},
   date={2011},
   number={3},
   pages={683--692},
   issn={0017-0895},
   review={\MR{2822809}},
   doi={10.1017/S0017089511000267},
}

\bib{ChinMusson}{article}{
   author={Chin, William},
   author={Musson, Ian M.},
   title={Hopf algebra duality, injective modules and quantum groups},
   journal={Comm. Algebra},
   volume={22},
   date={1994},
   number={12},
   pages={4661--4692},
   issn={0092-7872},
   review={\MR{1285700}},
   doi={10.1080/00927879408825095},
}

\bib{Dahlberg}{article}{
   author={Dahlberg, Randall P.},
   title={Injective hulls of Lie modules},
   journal={J. Algebra},
   volume={87},
   date={1984},
   number={2},
   pages={458--471},
   issn={0021-8693},
   review={\MR{739946}},
   doi={10.1016/0021-8693(84)90149-2},
}

\bib{Dahlberg89}{article}{
    AUTHOR = {Dahlberg, Randall P.},
     TITLE = {Injective hulls of simple {${\rm sl}(2,{\scr C})$} modules are locally Artinian},
   JOURNAL = {Proc. Amer. Math. Soc.},
    VOLUME = {107},
      YEAR = {1989},
    NUMBER = {1},
     PAGES = {35--37},
      ISSN = {0002-9939},
       DOI = {10.2307/2048030},
       URL = {https://doi.org/10.2307/2048030},
}

\bib{Damiano-Shapiro}{article}{
   author={Damiano, Robert F.},
   author={Shapiro, Jay},
   title={Twisted polynomial rings satisfying a polynomial identity},
   journal={J. Algebra},
   volume={92},
   date={1985},
   number={1},
   pages={116--127},
   issn={0021-8693},
   review={\MR{772474}},
   doi={10.1016/0021-8693(85)90148-6},
}

\bib{Dascalescu}{book}{
   author={D\u{a}sc\u{a}lescu, Sorin},
   author={N\u{a}st\u{a}sescu, Constantin},
   author={Raianu, \c{S}erban},
   title={Hopf algebras},
   series={Monographs and Textbooks in Pure and Applied Mathematics},
   volume={235},
   note={An introduction},
   publisher={Marcel Dekker, Inc., New York},
   date={2001},
   pages={x+401},
   isbn={0-8247-0481-9},
   review={\MR{1786197}},
}

\bib{Donkin}{article}{
   author={Donkin, Stephen},
   title={Locally finite representations of polycyclic-by-finite groups},
   journal={Proc. London Math. Soc. (3)},
   volume={44},
   date={1982},
   number={2},
   pages={333--348},
   issn={0024-6115},
   review={\MR{647436}},
   doi={10.1112/plms/s3-44.2.333},
}

\bib{Donkin82}{article} {
    AUTHOR = {Donkin, Stephen},
     TITLE = {On the Hopf algebra dual of an enveloping algebra},
   JOURNAL = {Math. Proc. Cambridge Philos. Soc.},
    VOLUME = {91},
      YEAR = {1982},
    NUMBER = {2},
     PAGES = {215--224},
      ISSN = {0305-0041},
       DOI = {10.1017/S0305004100059260},
       URL = {https://doi.org/10.1017/S0305004100059260},
}

\bib{ErdmannWildon}{book}{
   author={Erdmann, Karin},
   author={Wildon, Mark J.},
   title={Introduction to Lie algebras},
   series={Springer Undergraduate Mathematics Series},
   publisher={Springer-Verlag London, Ltd., London},
   date={2006},
   pages={x+251},
   isbn={978-1-84628-040-5},
   isbn={1-84628-040-0},
   review={\MR{2218355}},
   doi={10.1007/1-84628-490-2},
}

\bib{Feldvoss}{article}{
   author={Feldvoss, J\"{o}rg},
   title={Injective modules and prime ideals of universal enveloping
   algebras},
   conference={
      title={Abelian groups, rings, modules, and homological algebra},
   },
   book={
      series={Lect. Notes Pure Appl. Math.},
      volume={249},
      publisher={Chapman \& Hall/CRC, Boca Raton, FL},
   },
   date={2006},
   pages={107--119},
   review={\MR{2229106}},
   doi={10.1201/9781420010763.ch11},
}

\bib{Goodearl92}{article}{
   author={Goodearl, K. R.},
   title={Prime ideals in skew polynomial rings and quantized Weyl algebras},
   journal={J. Algebra},
   volume={150},
   date={1992},
   number={2},
   pages={324--377},
   issn={0021-8693},
   review={\MR{1176901}},
   doi={10.1016/S0021-8693(05)80036-5},
}

\bib{Goodearl_survey}{article}{
   author={Goodearl, K. R.},
   title={Prime spectra of quantized coordinate rings},
   conference={
      title={Interactions between ring theory and representations of
      algebras (Murcia)},
   },
   book={
      series={Lecture Notes in Pure and Appl. Math.},
      volume={210},
      publisher={Dekker, New York},
   },
   date={2000},
   pages={205--237},
   review={\MR{1759846}},
}

\bib{Goodearl-Letzter}{article}{
   author={Goodearl, K. R.},
   author={Letzter, E. S.},
   title={Prime factor algebras of the coordinate ring of quantum matrices},
   journal={Proc. Amer. Math. Soc.},
   volume={121},
   date={1994},
   number={4},
   pages={1017--1025},
   issn={0002-9939},
   review={\MR{1211579}},
   doi={10.2307/2161210},
}

\bib{Goodearl-Warfield}{book}{
   author={Goodearl, K. R.},
   author={Warfield, R. B., Jr.},
   title={An introduction to noncommutative Noetherian rings},
   series={London Mathematical Society Student Texts},
   volume={61},
   edition={2},
   publisher={Cambridge University Press, Cambridge},
   date={2004},
   pages={xxiv+344},
   isbn={0-521-83687-5},
   isbn={0-521-54537-4},
   review={\MR{2080008}},
   doi={10.1017/CBO9780511841699},
}

\bib{Goodearl-Zhang}{article}{
title={Noetherian Hopf algebra domains of Gelfand--Kirillov dimension two},
  author={Goodearl, Kenneth R.},
  author={Zhang, James J.},
  journal={Journal of Algebra},
  volume={324},
  number={11},
  pages={3131--3168},
  year={2010},
  publisher={Elsevier}
}


\bib{Hall}{article}{
   author={Hall, P.},
   title={On the finiteness of certain soluble groups},
   journal={Proc. London Math. Soc. (3)},
   volume={9},
   date={1959},
   pages={595--622},
   issn={0024-6115},
   review={\MR{110750}},
   doi={10.1112/plms/s3-9.4.595},
}

\bib{HatipogluLomp}{article}{
   author={Hatipo\u{g}lu, Can},
   author={Lomp, Christian},
   title={Injective hulls of simple modules over finite dimensional
   nilpotent complex Lie superalgebras},
   journal={J. Algebra},
   volume={361},
   date={2012},
   pages={79--91},
   issn={0021-8693},
   review={\MR{2921612}},
   doi={10.1016/j.jalgebra.2012.03.033},
}
\bib{Irving}{article}{
   author={Irving, Ronald S.},
   title={Prime ideals of Ore extensions over commutative rings},
   journal={J. Algebra},
   volume={56},
   date={1979},
   number={2},
   pages={315--342},
   issn={0021-8693},
   review={\MR{528579}},
   doi={10.1016/0021-8693(79)90341-7},
}

\bib{Jacobson}{article}{
   author={Jacobson, N.},
   title={A note on Lie algebras of characteristic $p$},
   journal={Amer. J. Math.},
   volume={74},
   date={1952},
   pages={357--359},
   issn={0002-9327},
   review={\MR{47026}},
   doi={10.2307/2372000},
}

\bib{Jategaonkar}{article}{
   author={Jategaonkar, Arun Vinayak},
   title={Certain injectives are Artinian},
   conference={
      title={Noncommutative ring theory},
      address={Internat. Conf., Kent State Univ., Kent Ohio},
      date={1975},
   },
   book={
      publisher={Springer, Berlin},
   },
   date={1976},
   pages={128--139. Lecture Notes in Math., Vol. 545},
   review={\MR{0432703}},
}

\bib{Lam}{book}{
    AUTHOR = {Lam, T. Y.},
     TITLE = {Lectures on modules and rings},
    SERIES = {Graduate Texts in Mathematics},
    VOLUME = {189},
 PUBLISHER = {Springer-Verlag, New York},
      YEAR = {1999},
     PAGES = {xxiv+557},
      ISBN = {0-387-98428-3},
   MRCLASS = {16-01},
  MRNUMBER = {1653294},
MRREVIEWER = {Jonathan Golan},
       DOI = {10.1007/978-1-4612-0525-8},
       URL = {https://doi.org/10.1007/978-1-4612-0525-8},
}

\bib{Matlis}{article}{
   author={Matlis, Eben},
   title={Modules with descending chain condition},
   journal={Trans. Amer. Math. Soc.},
   volume={97},
   date={1960},
   pages={495--508},
   issn={0002-9947},
   review={\MR{169879}},
   doi={10.2307/1993385},
}

\bib{McConnell1967}{article}{
	author={McConnell, J. C.},
	title={The intersection theorem for a class of non-commutative rings},
	journal={Proc. London Math. Soc. (3)},
	volume={17},
	date={1967},
	pages={487--498},
	issn={0024-6115},
	review={\MR{210738}},
	doi={10.1112/plms/s3-17.3.487},
}

\bib{McConnell}{article} {
    AUTHOR = {McConnell, J. C.},
     TITLE = {Localisation in enveloping rings},
   JOURNAL = {J. London Math. Soc.},
    VOLUME = {43},
      YEAR = {1968},
     PAGES = {421--428},
      ISSN = {0024-6107},
       DOI = {10.1112/jlms/s1-43.1.421},
       URL = {https://doi.org/10.1112/jlms/s1-43.1.421},
}

\bib{McConnell-Robson}{book}{
   author={McConnell, J. C.},
   author={Robson, J. C.},
   title={Noncommutative Noetherian rings},
   series={Graduate Studies in Mathematics},
   volume={30},
   edition={Revised edition},
   note={With the cooperation of L. W. Small},
   publisher={American Mathematical Society, Providence, RI},
   date={2001},
   pages={xx+636},
   isbn={0-8218-2169-5},
   review={\MR{1811901}},
   doi={10.1090/gsm/030},
}

\bib{Molnar}{article}{
   author={Molnar, Richard K.},
   title={A commutative Noetherian Hopf algebra over a field is finitely
   generated},
   journal={Proc. Amer. Math. Soc.},
   volume={51},
   date={1975},
   pages={501--502},
   issn={0002-9939},
   review={\MR{376740}},
   doi={10.2307/2040349},
}

\bib{Montgomery}{book}{
   author={Montgomery, Susan},
   title={Hopf algebras and their actions on rings},
   series={CBMS Regional Conference Series in Mathematics},
   volume={82},
   publisher={Published for the Conference Board of the Mathematical
   Sciences, Washington, DC; by the American Mathematical Society,
   Providence, RI},
   date={1993},
   pages={xiv+238},
   isbn={0-8218-0738-2},
   review={\MR{1243637}},
   doi={10.1090/cbms/082},
}

\bib{musson}{article}{
   author={Musson, I. M.},
   title={Injective modules for group rings of polycyclic groups. I, II},
   journal={Quart. J. Math. Oxford Ser. (2)},
   volume={31},
   date={1980},
   number={124},
   pages={429--448, 449--466},
   issn={0033-5606},
   review={\MR{596979}},
   doi={10.1093/qmath/31.4.429},
}

\bib{Musson11}{article}{
   author={Musson, Ian M.},
   title={Finitely generated, non-Artinian monolithic modules},
   conference={
      title={New trends in noncommutative algebra},
   },
   book={
      series={Contemp. Math.},
      volume={562},
      publisher={Amer. Math. Soc., Providence, RI},
   },
   date={2012},
   pages={211--220},
   review={\MR{2905561}},
   doi={10.1090/conm/562/11138},
}

\bib{Pickel}{article}{
	author={Pickel, P. F.},
	title={Rational cohomology of nilpotent groups and Lie algebras},
	journal={Comm. Algebra},
	volume={6},
	date={1978},
	number={4},
	pages={409--419},
	issn={0092-7872},
	review={\MR{491902}},
	doi={10.1080/00927877808822253},
}

\bib{Roseblade}{article}{
   author={Roseblade, J. E.},
   title={Applications of the Artin-Rees lemma to group rings},
   conference={
      title={Symposia Mathematica, Vol. XVII},
      address={Convegno sui Gruppi Infiniti, INDAM, Rome},
      date={1973},
   },
   book={
      publisher={Academic Press, London},
   },
   date={1976},
   pages={471--478},
   review={\MR{0407119}},
}

\bib{Skryabin}{article}{
   author={Skryabin, Serge},
   title={New results on the bijectivity of antipode of a Hopf algebra},
   journal={J. Algebra},
   volume={306},
   date={2006},
   number={2},
   pages={622--633},
   issn={0021-8693},
   review={\MR{2271358}},
   doi={10.1016/j.jalgebra.2006.04.017},
}

\bib{SmallWarfield84}{article}{
   author={Small, L. W.},
   author={Warfield, R. B., Jr.},
   title={Prime affine algebras of Gel\cprime fand-Kirillov dimension one},
   journal={J. Algebra},
   volume={91},
   date={1984},
   number={2},
   pages={386--389},
   issn={0021-8693},
   review={\MR{769581}},
   doi={10.1016/0021-8693(84)90110-8},
}

\bib{Wang-Zhang-Zhuang}{article}{
   author={Wang, D.-G.},
   author={Zhang, J. J.},
   author={Zhuang, G.},
   title={Hopf algebras of GK-dimension two with vanishing Ext-group},
   journal={J. Algebra},
   volume={388},
   date={2013},
   pages={219--247},
   issn={0021-8693},
   review={\MR{3061686}},
   doi={10.1016/j.jalgebra.2013.03.032},
}

\bib{Zhou-Shen-Lu}{article}{
    title={The structure of connected (graded) Hopf algebras},
    author={Zhou, G.-S.},
    author={Shen, Y.}, 
    author={Lu, D. -M.},
   journal={Adv. Math},
   volume={372},
   date={2020},
   pages={107292},
   issn={0001-8708},
   doi={10.1016/j.aim.2020.107292},
   eprint={1904.01918},
}

\end{biblist}
\end{bibdiv}
\end{document}